\newtheorem{theorem}{Theorem}[section]
\newtheorem{corollary}[theorem]{Corollary}
\newtheorem{proposition}[theorem]{Proposition}
\newtheorem{lemma}[theorem]{Lemma}
\newtheorem{definition}[theorem]{Definition}
\newtheorem{remark}[theorem]{Remark}
\def\R{\mathbb{R}}
\def\N{\mathbb{N}}
\def\cD{\mathcal{D}}
\def\cP{\mathcal{P}}
\def\aa{{\boldsymbol{a}}}
\def\bb{\boldsymbol{b}}
\def\cc{\boldsymbol{c}}
\def\ee{\boldsymbol{e}}
\def\ii{\boldsymbol{i}}
\def\qq{\boldsymbol{q}}
\def\sb{\boldsymbol{s}}
\def\uu{{\boldsymbol{u}}}
\def\vv{\boldsymbol{v}}
\def\xx{\boldsymbol{x}}
\def\yy{\boldsymbol{y}}
\def\00{\boldsymbol{0}}
\def\11{\boldsymbol{1}}
\def\22{\boldsymbol{2}}
\def\33{\boldsymbol{3}}
\def\44{\boldsymbol{4}}
\def\55{\boldsymbol{5}}
\def\66{\boldsymbol{6}}
\def\77{\boldsymbol{7}}
\def\88{\boldsymbol{8}}
\def\99{\boldsymbol{9}}
\def\aalpha{{\boldsymbol{\alpha}}}
\def\ggamma{{\boldsymbol{\gamma}}}
\def\ssigma{{\boldsymbol{\sigma}}}
\def\mfu{{{\mathfrak{u}}}}
\def\mfU{{{\mathfrak{U}}}}
\def\mfv{{{\mathfrak{v}}}}
\def\mfw{{{\mathfrak{w}}}}
\def\mfD{{{\mathfrak{D}}}}
\DeclareMathOperator*{\spn}{span}
\DeclareMathOperator*{\argmin}{arg\,min}
\def\mix{{\qopname\relax o{mix}}}
\def\mybigtimes{\mathop{\mathchoice{%display:
   \vcenter{\hbox to10bp{\vrule height15bp width0pt \pdfliteral{
   q 1 J .8 w 0 1 m 10 14 l S 0 14 m 10 1 l S Q
}\hss}}}{%text:
   \vcenter{\hbox to10bp{\kern1bp\vrule height10bp width0pt \pdfliteral{
   q 1 J .65 w 0 0 m 8 10 l S 0 10 m 8 0 l S Q
}\hss}}}{\times}{\times}%script, scriptscript not defined
}}
\def\transpose{{\rm T}}
\newcommand{\dom}{\mathcal{D}}
\title[Constructive Approximation of High-Dimensional
  Functions]
      {Constructive Approximation of High-Dimensional Functions
  with  Small Efficient Dimension with Applications in Uncertainty Quantification}
\author{Christian Rieger}
\address{Christian Rieger,  FB Mathematik und Informatik,
  Philipps-Universität Marburg, 35032 Marburg, Germany}
\email{riegerc@mathematik.uni-marburg.de}
\author{Holger Wendland}
\address{Holger Wendland, Department of Mathematics, University of Bayreuth}
\email{holger.wendland@uni-bayreuth.de}
\date{November 22, 2024}							% Activate to display a given date or no date
\begin{document}
%\maketitle
\begin{abstract}
In this paper, we show that the approximation of high-dimensional functions, which are
effectively low-dimensional, does not suffer from the curse of
dimensionality. This is shown first in a general reproducing kernel
Hilbert space set-up and then specifically for Sobolev and
mixed-regularity Sobolev spaces. Finally, efficient estimates are
derived for deciding whether a high-dimensional function is
effectively low-dimensional by studying error bounds in weighted
reproducing kernel Hilbert spaces. The results are applied to
parametric partial differential equations, a typical problem from
uncertainty quantification. 
\end{abstract}

\maketitle

\section{Introduction}
The approximation of high-dimensional functions is still a challenging
task because of the often occurring curse of dimensionality, see
\cite{Bellman-57-1,Bellman-61-1}.  While the
curse of dimensionality in general cannot be beaten, as it is inherent to the
problem of approximating functions from certain classes, see
\cite{Rieger-Wendland-24-1} for an elaborate discussion but also
\cite{Novak-Wozniakowski-08-1,  Novak-Wozniakowski-10-1,
  Novak-Wozniakowski-12-1} for an information-based complexity view point, it is
important to identify relevant classes of high-dimensional functions which do not
suffer from the curse of dimensionality and to develop efficient,
computable methods to approximate such functions.
One way of tackling this problem is to use approximations which are sums of
lower-dimensional functions. This approach is often based on an {\em
  Analysis of Variance (ANOVA)}
or an anchored decomposition of the underlying, unknown function. Such
decompositions were first studied in \cite{Hoeffding-48-1,Sobol-93-1}
and are used in the context of {\em high-dimensional model
  representation}, \cite{Rabitz-Alics-99-1}, in data analysis,
  \cite{Ziehn-Tomlin-09-1}, and machine learning,
  \cite{Bastian-Rabitz-18-1,Li-etal-17-1}. A general unifying
  description, using projections, can be found  in \cite{Kuo-etal-10-1}. 

  In the recent
paper \cite{Rieger-Wendland-24-1}, we have shown that certain
subclasses of Sobolev functions and mixed regularity Sobolev functions
do not suffer from the curse of dimensionality and can be efficiently
approximated using a number of points which grows at most polynomially
in the space dimension. These subclasses of Sobolev functions in
$d$-variables consists, for example, of functions that can be written
as the sum of functions that depend only on $n<d$ variables. The
computational cost of computing an approximation to such a function
grows only polynomially in $d$, showing that such functions can be
approximated efficiently.  If this is possible, we will call $n$ the
{\em order} or {\em effective dimension} of such a function, 
though the term {\em effective dimension} is particularly in the ANOVA
context slightly differently defined, see \cite{Kuo-etal-10-1} and the
literature given therein for a discussion 
of these concepts. 

The goal of this paper is to extend previous results from
\cite{Rieger-Wendland-24-1} in the following way. We will
discuss the problem of approximating a high-dimensional function $f$
which is effectively low-dimensional, i.e. it  can be written in the form
$f=f_1+f_2$, where $f_2$ is assumed to be {\em small} 
and $f_1$ is assumed to be of the above form, i.e. it
is a sum of functions depending only $n$ instead of $d$ variables. 

The challenge here is that while we know how to approximate $f_1$ if we
would know its values at  specific data sets, we cannot measure it since
we can only measure $f$. We will
address this problem of {\em mismeasurement} in Section
\ref{sec:mismeasured} by first looking at the general set-up in
reproducing kernel Hilbert spaces and then specify it to the above
situation. In Section \ref{sec:owen}, we will derive concepts on
deciding quantitatively and qualitatively whether $f$ is effectively
low-dimensional. This section is based on \cite{Owen-19-1} but uses
an anchored rather than ANOVA decompositions. In the final section, we
discuss parametric partial differential equations, a typical problem
often considered in  uncertainty quantification. We will show that
under mild assumption of the parametric partial differential equations, the solutions are
effectively low-dimensional and thus can be approximated efficiently
by the tools developed in this paper.

While we are only discussing anchored decompositions,
there is an inherent connection to ANOVA decompositions. This is a
consequence of the close relation between the anchored and ANOVA
terms, see for example \cite{Gilbert-etal-22-1,Gnewuch-etal-17-1, Hefter-etal-16-1,
  Hinrichs-Schneider-16-1, Kritzer-etal-17-1}.

\subsection{Anchored Decomposition}
The rest of this section is devoted to introducing the necessary
notation and material on anchored decompositions. 
In this paper we are exclusively dealing with
the anchored decomposition of a function. To this end, we assume that
$[\aa,\bb]=[a_1,b_1]\times \cdots [a_d,b_d]$ is a $d$-variate interval. Set
$\mfD:=\{1,\ldots,d\}$ and let $\cP(\mfD)=\{\mfu:
\mfu\subseteq\mfD\}$ be the set of all subsets of $\mfD$.
For a set $\mfu\subseteq\mfD$  we let $\#\mfu$ be the number of elements
in $\mfu$ and set  $\aa_\mfu = (a_j:j\in\mfu)\in\R^{\#\mfu}$.  

\begin{definition}
  \begin{itemize}
 \item A set $\Lambda \subseteq \cP(\mfD)$ of subsets of
   $\mfD=\{1,\ldots,d\}$ is called  {\em downward closed} if for all  
$\mfu\in\Lambda$ and $\mfv\subseteq\mfu$ also
$\mfv\in\Lambda$. 
\item A continuous function  $f:[\aa,\bb] \to\R$
{\em has a  $\Lambda$-representation}, if it can be written as
\begin{equation}\label{flambda}
  f=\sum_{\mfu\in\Lambda} g_\mfu,
\end{equation}
where each $g_\mfu$ is a function depending only on the variables with
indices in $\mfu$. If $\Lambda=\cP(\mfD)$, i.e. if
\begin{equation}\label{fulldecomposition}
  f = \sum_{\mfu\subseteq\mfD} g_\mfu,
\end{equation}
then this is referred to as a {\em full decomposition}.
  \end{itemize}
\end{definition}
Typically, such representations start with a full decomposition,
derived using certain projection methods, and then argue that under
certain assumptions the higher-order terms vanish.

To describe the anchored representation that will be used throughout
this paper, we will use the following notation. For  $\xx, \cc\in [\aa,\bb]$ and
$\mfu\subseteq\mfD$ we let $(\xx;\cc)_\mfu\in [\aa,\bb]\subseteq\R^d$ be the vector 
\[
(\xx;\cc)_\mfu:=\begin{cases}
x_j & \mbox{ if } j\in \mfu \\
c_j & \mbox{ if } j\not\in \mfu.
\end{cases} 
\]
We will abuse this notation also in the case that a vector
$\widetilde{\xx}\in\R^{\#\mfu}$ is given and write $(\widetilde{\xx};\cc)_\mfu\in[\aa,\bb]$
for the vector  having components $x_j$ for $j\in \mfu$ and $c_j$ otherwise. 
We now have the well-known {\em anchored
  decomposition} of functions, which can, for example, be
found in \cite{Kuo-etal-10-1}.

\begin{theorem}\label{thm:decomposition}
Let $\cc\in[\aa,\bb]$ be a fixed point, the {\em anchor}.  Let
$H\subseteq C[\aa,\bb]$ a linear subspace of continuous functions.
Any function $f\in H$ has an {\em anchored
  decomposition}, i.e.   it can be written in the form
\begin{equation}\label{eq:anchor}
	f(\xx)=\sum_{\mfu \subseteq \mfD }f_{\mfu;\cc}(\xx_{\mfu}),
\end{equation}
where the components $f_{\mfu;\cc}$ are functions depending only on
the variables with indices in $\mfu$ and are given by 
\begin{align}
  f_{\emptyset;\cc}&\equiv f(\cc), \label{eq:anova1}\\
  f_{\mfu;\cc}(\xx_{\mfu})&=
  f((\xx;\cc)_{\mfu})-\sum_{\mfv \subsetneq \mfu }f_{\mfv;\cc}(\xx_{\mfv})
  =\sum_{\mfv \subseteq \mfu }(-1)^{\#\mfu -\#\mfv}f ((\xx;\cc)_{\mfv}). \label{eq:anova2}
\end{align}
They satisfy the {\em annihilation property}
$  f_{\mfu;\cc}(\xx_{\mfu})=0 $, whenever there is a 
 $ j\in \mfu$  such that $x_j=c_j $.
Moreover, the following two properties hold.
\begin{itemize}
\item If $f $ has a full decomposition (\ref{fulldecomposition}) where the
  components $g_\mfu$ also satisfy the annihiliation property
  %$g_\mfu(\xx_\mfu)=0$ whenever there is
  %a $j\in \mfu$ with $x_j=c_j$
  then $g_\mfu = f_{\mfu;\cc}$ for all
  $\mfu\subseteq \mfD$. In this way,  the  decomposition (\ref{eq:anchor}) is unique.

\item If  $f$ has a full decomposition (\ref{fulldecomposition}) 
and if $\mfv\subseteq\mfD$ is a subset such that
    $g_\mfu=0$ for all $\mfu\subseteq \mfD$ with $\mfv\subseteq\mfu$
    then also $f_{\mfu;\cc}=0$ for all such $\mfu$.
\end{itemize}
\end{theorem}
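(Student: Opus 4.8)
The plan is to read the two formulas in \eqref{eq:anova2} as a Möbius inversion on the Boolean lattice and to deduce every assertion by a short combinatorial computation. Fix $\xx$ and abbreviate $F(\mfv):=f((\xx;\cc)_\mfv)$, so that the closed form in \eqref{eq:anova2} reads $f_{\mfu;\cc}(\xx_\mfu)=\sum_{\mfv\subseteq\mfu}(-1)^{\#\mfu-\#\mfv}F(\mfv)$. First I would check that the recursive and the closed form agree: the recursion is equivalent to $F(\mfu)=\sum_{\mfv\subseteq\mfu}f_{\mfv;\cc}(\xx_\mfv)$, and inverting this relation over $\cP(\mfu)$ returns exactly the alternating sum, so the two definitions coincide (an immediate induction on $\#\mfu$, or the Möbius inversion formula). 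To obtain the representation \eqref{eq:anchor} I would insert the closed form and interchange the order of summation,
\[
\sum_{\mfu\subseteq\mfD}f_{\mfu;\cc}(\xx_\mfu)=\sum_{\mfv\subseteq\mfD}F(\mfv)\sum_{\mfv\subseteq\mfu\subseteq\mfD}(-1)^{\#\mfu-\#\mfv}.
\]
Writing $\mfw=\mfu\setminus\mfv$, the inner sum becomes $\sum_{\mfw\subseteq\mfD\setminus\mfv}(-1)^{\#\mfw}=(1-1)^{d-\#\mfv}$, which vanishes unless $\mfv=\mfD$; the single surviving term is $F(\mfD)=f(\xx)$.

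For the annihilation property I would exploit the involution toggling membership of a fixed index. If $j\in\mfu$ satisfies $x_j=c_j$, then for every $\mfv\subseteq\mfu$ the points $(\xx;\cc)_\mfv$ and $(\xx;\cc)_{\mfv\triangle\{j\}}$ coincide, since they can differ only in the $j$-th coordinate, where both values equal $c_j=x_j$. The pairing $\mfv\mapsto\mfv\triangle\{j\}$ matches subsets of opposite cardinality parity, so the corresponding terms of the alternating sum cancel and $f_{\mfu;\cc}(\xx_\mfu)=0$.

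For the first additional property I would argue by induction on $\#\mfu$. Evaluating the full decomposition \eqref{fulldecomposition} at $(\xx;\cc)_\mfu$ and using that any $g_\mfw$ with $\mfw\not\subseteq\mfu$ has an index on which its argument hits the anchor, hence vanishes by the assumed annihilation property, yields $f((\xx;\cc)_\mfu)=\sum_{\mfw\subseteq\mfu}g_\mfw(\xx_\mfw)$. Subtracting the lower-order terms, which equal $f_{\mfw;\cc}$ by the inductive hypothesis, and comparing with the recursion in \eqref{eq:anova2} gives $g_\mfu=f_{\mfu;\cc}$; uniqueness then follows since the $f_{\mfu;\cc}$ themselves satisfy annihilation.

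The last property is where the real work lies, because here the components $g_\mfS$ of \eqref{fulldecomposition} are not assumed to annihilate. Fix $\mfu\supseteq\mfv$. Starting from $f_{\mfu;\cc}(\xx_\mfu)=\sum_{\mfw\subseteq\mfu}(-1)^{\#\mfu-\#\mfw}F(\mfw)$ with $F(\mfw)=\sum_{\mfS\subseteq\mfD}g_\mfS(((\xx;\cc)_\mfw)_\mfS)$, I would interchange the two sums. For a fixed $\mfS$ the value of $g_\mfS$ at $(\xx;\cc)_\mfw$ depends only on $\mfw\cap\mfS$, so splitting $\mfw$ as $(\mfw\cap\mfS)\cup\mfw'$ with $\mfw'\subseteq\mfu\setminus\mfS$ factors out $\sum_{\mfw'\subseteq\mfu\setminus\mfS}(-1)^{\#\mfw'}$, which is $0$ unless $\mfu\subseteq\mfS$. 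Hence $f_{\mfu;\cc}$ is a signed combination of the $g_\mfS$ with $\mfS\supseteq\mfu$ only; since $\mfu\supseteq\mfv$, each such $\mfS$ contains $\mfv$ and therefore $g_\mfS=0$ by hypothesis, giving $f_{\mfu;\cc}=0$. I expect the bookkeeping of this final cancellation—correctly isolating the dependence of $g_\mfS((\xx;\cc)_\mfw)$ on $\mfw$ and identifying the resulting vanishing factor—to be the main obstacle, whereas the preceding steps reduce to routine binomial and Möbius computations.
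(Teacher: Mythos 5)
Your proof is correct in every part, but there is nothing in the paper to compare it against: the paper states this theorem as a known result and simply cites Kuo, Sloan, Wasilkowski and Wo\'zniakowski (\emph{On decompositions of multivariate functions}) without giving any proof. So your argument is a genuine, self-contained reconstruction. Each step checks out: the equivalence of the recursive and closed forms is indeed M\"obius inversion of $F(\mfu)=\sum_{\mfv\subseteq\mfu}f_{\mfv;\cc}(\xx_\mfv)$; the representation \eqref{eq:anchor} follows from the vanishing of $\sum_{\mfw\subseteq\mfD\setminus\mfv}(-1)^{\#\mfw}$ unless $\mfv=\mfD$ (or, even more directly, from the recursion at $\mfu=\mfD$); the toggle involution $\mfv\mapsto\mfv\triangle\{j\}$ gives annihilation since $j\in\mfu$ keeps the pairing inside $\cP(\mfu)$ and $x_j=c_j$ makes paired evaluation points coincide; the uniqueness induction is sound because annihilation kills exactly the terms $g_\mfw$ with $\mfw\not\subseteq\mfu$ when $f$ is evaluated at $(\xx;\cc)_\mfu$; and your treatment of the last property is the right one — the value $g_\mfS((\xx;\cc)_\mfw)$ depends on $\mfw$ only through $\mfw\cap\mfS$, so the sum over $\mfw'\subseteq\mfu\setminus\mfS$ factors out as $(1-1)^{\#(\mfu\setminus\mfS)}$ and only sets $\mfS\supseteq\mfu\supseteq\mfv$ survive, all of which carry $g_\mfS=0$ by hypothesis. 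For comparison, the cited reference derives all of this from a general framework of commuting projections (here $P_j$ is evaluation of the $j$-th variable at the anchor $c_j$), where the component $f_{\mfu;\cc}$ arises as $\prod_{j\in\mfu}(I-P_j)\prod_{j\notin\mfu}P_j$ applied to $f$ and the annihilation, uniqueness and vanishing properties follow from the projection algebra; your direct inclusion--exclusion route is more elementary and avoids setting up that machinery, at the cost of redoing by hand the cancellation bookkeeping that the projection formalism packages once and for all.
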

The last property ensures that if there is a $\Lambda$-decomposition
of $f$ with a downward closed $\Lambda$, then also the anchored
decomposition uses only the terms $f_{\mfu;\cc}$ with
$\mfu\in\Lambda$. Thus, in our theoretical investigations later on, we
can restrict ourselves always to anchored decompositions.

If convenient, we will also interpret the functions $f_{\mfu;\cc}$ as
functions on $[\aa,\bb]$ rather than $[\aa_\mfu,\bb_\mfu]$. As these
functions are constant with respect to the variables with indices in
$\mfD\setminus\mfu$, we have in particular
\begin{equation}\label{constantintegral}
\|f_{\mfu;\cc}\|_{L_2([\aa,\bb])}^2 =
\left(\prod_{j\in\mfD\setminus\mfu}(b_j-a_j)\right)
\|f_{\mfu;\cc}\|_{L_2[\aa_\mfu,\bb_\mfu]}^2.
\end{equation}

\section{Approximation of Mismeasured Functions}\label{sec:mismeasured}
 
In this section, we will discuss the following generic approximation
problem. Let $\Omega\subseteq \R^d$ be an arbitrary domain.
Let $H$ be a reproducing kernel Hilbert space of functions
$f:\Omega\to\R$, i.e. there is a unique function
$K:\Omega\times\Omega\to\R$ with $K(\cdot,\xx)\in H$ for all
$\xx\in\Omega$ and with $f(\xx)=\langle f,K(\cdot,\xx)\rangle_H$ for
all $\xx\in \Omega$ and $f\in H$.

Let $H_1\subseteq H$ be a closed subspace of $H$ and let
$H_2=H_1^\bot$ be the orthogonal complement such that we have
$H=H_1\oplus H_2$. This means every $f\in H$ has a unique
decomposition $f=f_1+f_2$ with $f_i\in H_i$, $1\le i\le 2$. 
Given a point set $X=\{\xx_1,\ldots,\xx_N\}\subseteq\Omega$, we want to
compute an approximation to $f\in H$ from $H_1$ using the data
$f(X)=(f(\xx_1),\ldots,f(\xx_N))^\transpose$. We will do this by
computing a regularized regression to $f_1$ using the known data values $f(X)$
instead of the unknown data values $f_1(X)$. Hence, our approach can
either be interpreted as approximating a function $f$ from a certain
subspace $H_1$ to which it does not belong or as approximating a
function $f_1$ from that subspace using the wrong data $(X,f(X))$. The
reasoning behind this is, as pointed out in the introduction, that we
will assume that $f_1$ is the dominating part of $f$.

The precise definition of our approach is as follows.

\begin{definition}
  Let $\lambda>0$ be given. Under the assumptions above, set
  \[
  J(s)=J_{X,\lambda,f(X)}(s):= \sum_{j=1}^N |f(\xx_j)-s(\xx_j)|^2 +
  \lambda\|s\|_H^2, \qquad s\in H.
  \]
Then, the approximation $Q_{X,\lambda} f\in H_1$ to $f\in H$ from $H_1$ using
the data $(X,f(X))$ is defined as 
\[
Q_{X,\lambda} f:=\argmin_{s\in H_1} J_{X,\lambda,f(X)}(s).
\]
\end{definition}
Obviously, a different loss function than the squared absolute value can be used in
the sum but the advantage of using the squared absolute value is that
the approximation process is linear and the approximation can be
computed by solving a linear system. Crucial for this is the
following, well-known result. A proof of the first statement can
already be found in \cite{Aronszajn-50-1}, a proof of the second
statement is in \cite{Wahba-75-1}. In its formulation, we use the
standard notation $K_1(Y,X)=(K_1(\yy_i,\xx_j))$ 
    for any sets $X=\{\xx_1,\ldots,\xx_N\}$ and
    $Y=\{\yy_1,\ldots,\yy_M\}$ and $K_1:\Omega\times\Omega\to\R$.

\begin{lemma}\label{lem:lem1}
  Let $H$ be a reproducing kernel Hilbert space with kernel $K:\Omega\times\Omega\to\R$. 
  \begin{enumerate}
  \item If $H_1\subseteq H$ is a closed subspace then $H_1$ is also a
    reproducing kernel Hilbert space. If $P:H\to H_1$ is the
    orthogonal projection onto $H_1$ then the 
    reproducing kernel of $H_1$ is given by
    $K_1(\cdot,\xx)=PK(\cdot,\xx)$, $\xx\in \Omega$.
  \item The approximation $Q_{X,\lambda} f\in H_1$ to $f\in H$ using the data
    $(X,f(X))$ and $\lambda>0$ is given by
    \[
    Q_{X,\lambda}f = K_1(\cdot,X) \left(K_1(X,X)+\lambda I\right)^{-1}
    f(X).
    \]
  \end{enumerate}
\end{lemma}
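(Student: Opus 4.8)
The plan is to treat the two parts separately, since the first is a structural fact about reproducing kernels while the second is an instance of the representer theorem combined with the explicit minimization of a quadratic functional.

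For part (1), I would first note that since $H_1$ is a closed subspace it is itself a Hilbert space under the inherited inner product, and each point evaluation $g\mapsto g(\xx)$ is bounded on $H_1$ simply because it is bounded on the larger space $H$. Hence $H_1$ is automatically an RKHS, and it only remains to identify its kernel. I would take $K_1(\cdot,\xx):=PK(\cdot,\xx)$ as the candidate; this lies in $H_1$ by construction. To verify the reproducing property on $H_1$, I would compute, for any $g\in H_1$,
\[
\langle g, K_1(\cdot,\xx)\rangle_H = \langle g, PK(\cdot,\xx)\rangle_H = \langle Pg, K(\cdot,\xx)\rangle_H = \langle g, K(\cdot,\xx)\rangle_H = g(\xx),
\]
where the second equality uses that $P$ is self-adjoint and the third uses $Pg=g$ for $g\in H_1$. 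Since reproducing kernels are unique, this identifies $K_1$ and finishes part (1). The only point requiring care is that the orthogonal projection onto a closed subspace is both self-adjoint and idempotent; these are precisely the two properties that drive the computation.

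For part (2), I would first establish that the minimizer lies in the finite-dimensional span $V:=\spn\{K_1(\cdot,\xx_1),\ldots,K_1(\cdot,\xx_N)\}\subseteq H_1$, which is the heart of the representer theorem. Writing any $s\in H_1$ as $s=s_V+s_\perp$ with $s_\perp\perp V$ inside $H_1$, the reproducing property gives $s(\xx_j)=\langle s, K_1(\cdot,\xx_j)\rangle_{H_1}=s_V(\xx_j)$, so the data term in $J$ depends only on $s_V$, while $\|s\|_H^2=\|s_V\|_{H_1}^2+\|s_\perp\|_{H_1}^2\ge\|s_V\|_{H_1}^2$. Hence discarding $s_\perp$ cannot increase $J$, and since $\lambda>0$ makes the functional strictly convex, the unique minimizer must satisfy $s=\sum_{k=1}^N\alpha_k K_1(\cdot,\xx_k)=K_1(\cdot,X)\alpha$ for some $\alpha\in\R^N$.

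It then remains to compute $\alpha$. Substituting the ansatz and using $s(\xx_j)=(K_1(X,X)\alpha)_j$ together with $\|s\|_{H_1}^2=\alpha^\transpose K_1(X,X)\alpha$, the objective becomes the convex quadratic $J(\alpha)=\|f(X)-K_1(X,X)\alpha\|_2^2+\lambda\,\alpha^\transpose K_1(X,X)\alpha$. Setting its gradient to zero yields the normal equation $K_1(X,X)\bigl((K_1(X,X)+\lambda I)\alpha-f(X)\bigr)=0$, for which $\alpha=(K_1(X,X)+\lambda I)^{-1}f(X)$ is a solution; here $K_1(X,X)+\lambda I$ is invertible because $K_1(X,X)$ is positive semidefinite and $\lambda>0$. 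This produces the claimed formula $Q_{X,\lambda}f=K_1(\cdot,X)(K_1(X,X)+\lambda I)^{-1}f(X)$. The main obstacle I anticipate is the representer-theorem reduction in part (2) — specifically, being careful that the orthogonal decomposition is taken inside $H_1$ and that the penalty $\|s\|_H$ coincides with $\|s\|_{H_1}$ on that subspace — rather than the closing algebra, which is routine once the ansatz is justified and the possible rank-deficiency of $K_1(X,X)$ is absorbed into the $\lambda I$ regularization.
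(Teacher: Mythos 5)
Your proof is correct and follows essentially the same route as the paper: the paper itself only cites Aronszajn and Wahba for this lemma, but the proof it gives for the weighted generalization immediately afterwards (decomposing $s\in H_1$ into a component in $\spn\{K_1(\cdot,\xx_j)\}$ plus an orthogonal remainder that vanishes on $X$, then minimizing the resulting convex quadratic in the coefficient vector) is exactly your argument, and your part (1) is the standard projection computation from Aronszajn. Your handling of the two delicate points --- that $\|s\|_H=\|s\|_{H_1}$ on the subspace, and that $\lambda I$ absorbs any rank deficiency of $K_1(X,X)$ --- is sound.
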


To bound the error $f-Q_{X,\lambda} f$, we will split the
error into $f_2=f-f_1$ and $f_1-Q_{X,\lambda }f$, taking the point of view
that $Q_{X,\lambda}f$ is rather an approximation to $f_1$ using the
wrong data as pointed out above.

To bound the second term, we will
employ typical sampling inequalities, see \cite{Arcangeli-etal-07-1,
  Arcangeli-etal-12-1, Rieger-Wendland-17-1,
  Wendland-Rieger-05-1}, though we will need them to hold for the
subspace $H_1$ and not for the whole space $H$, as is typically the
case in the papers cited above.

Nonetheless, for such sampling inequalities we typically need
bounds on $\|f_1-Q_{X,\lambda} f\|_H$ and $\|(f_1-Q_{X,\lambda}
f)(X)\|_p$, where $\|\cdot\|_p$ is the standard $\ell_p$-norm on
$\R^N$, i.e.
\[
\|\yy\|_p = \begin{cases}
  \left(  \sum_{j=1}^N |y_j|^p\right)^{1/p} & \mbox{ if } 1\le p<\infty,\\
  \max_{1\le j\le N} |y_j| & \mbox{ if } p=\infty.
\end{cases}
\]
In the formulation of the following results, we will use that the 
equivalence of $\ell_p$-norms on $\R^N$ is given by
$\|\yy\|_q \le c_{qp} \|\yy\|_p$, $\yy\in\R^N$ with equivalence
constant 
\[
c_{q,p} = \begin{cases}
  N^{\frac{1}{q}-\frac{1}{p}} & \mbox{ if } p\ge q\\
  1 & \mbox{ if } p\le q,
\end{cases}
\]
which we will particularly employ in the form $\widetilde{c}_p\|\yy\|_2\le
\|\yy\|_p\le c_p\|\yy\|_2$, i.e. with  $\widetilde{c}_p=c_{2,p}^{-1}$
and $c_p=c_{p,2}$. Moreover, we will use the notation
$\|f\|_{\ell_p(X)}:=\|f(X)\|_p$. 

\begin{proposition}\label{prop:bounds} Let $H$ be a reproducing kernel
  Hilbert space with kernel 
  $K:\Omega\times\Omega\to\R$ and let  $H_1\subseteq H$ be a closed
  subspace with orthogonal complement $H_2\subseteq H$ and
  with kernel $K_1:\Omega\times\Omega\to\R$.
  Then, for any $f=f_1+f_2\in H$ with $f_i\in H_i$, $i=1,2$, we have
\[
\|Q_{X,\lambda} f\|_H \le \|f\|_H + \frac{1}{\sqrt{\lambda}}
\|f_2\|_{\ell_2(X)}
\]
and, for $1\le p\le \infty$,
\[
\|f_1-Q_{X,\lambda}f\|_{\ell_p(X)} \le c_p 
\left(2 \|f_2\|_{\ell_2(X)} +\sqrt{\lambda}
\|f\|_H\right),
  \]
  where $c_p$ is the possibly $N$-dependent equivalence
  constant from above. 
\end{proposition}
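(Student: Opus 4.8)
The plan is to extract both inequalities from a single comparison: since $Q := Q_{X,\lambda}f$ minimizes $J_{X,\lambda,f(X)}$ over $H_1$ and $f_1\in H_1$ is an admissible competitor, we have $J(Q)\le J(f_1)$. Note that this uses only the variational characterization of $Q$, not the closed form from Lemma~\ref{lem:lem1}. The key observation is that $f_1$ is precisely the component of $f$ we cannot measure, yet substituting it produces a controllable residual: because $f-f_1=f_2$, one computes $J(f_1)=\sum_{j=1}^N |f_2(\xx_j)|^2+\lambda\|f_1\|_H^2=\|f_2\|_{\ell_2(X)}^2+\lambda\|f_1\|_H^2$. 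Everything else follows by isolating the relevant piece of $J(Q)$ and using $\|f_1\|_H\le\|f\|_H$, which holds since $f_1=Pf$ is the orthogonal projection of $f$ onto $H_1$.

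For the first bound I would drop the nonnegative data term in $J(Q)$ to get $\lambda\|Q\|_H^2\le J(Q)\le J(f_1)$, hence $\|Q\|_H^2\le\|f_1\|_H^2+\lambda^{-1}\|f_2\|_{\ell_2(X)}^2$. Taking square roots and invoking the elementary inequality $\sqrt{a^2+b^2}\le a+b$ for $a,b\ge 0$, together with $\|f_1\|_H\le\|f\|_H$, yields $\|Q\|_H\le\|f\|_H+\lambda^{-1/2}\|f_2\|_{\ell_2(X)}$.

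For the second bound I would first treat $p=2$ and then lift to general $p$. Writing $f_1-Q=(f_1-f)+(f-Q)$ and using $f_1-f=-f_2$, the triangle inequality in $\ell_2(X)$ gives $\|f_1-Q\|_{\ell_2(X)}\le\|f_2\|_{\ell_2(X)}+\|f-Q\|_{\ell_2(X)}$. The residual term is exactly the data part of $J(Q)$, so $\|f-Q\|_{\ell_2(X)}^2\le J(Q)\le J(f_1)=\|f_2\|_{\ell_2(X)}^2+\lambda\|f_1\|_H^2$; applying $\sqrt{a^2+b^2}\le a+b$ and $\|f_1\|_H\le\|f\|_H$ once more bounds this by $\|f_2\|_{\ell_2(X)}+\sqrt{\lambda}\|f\|_H$. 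Adding the two contributions gives $\|f_1-Q\|_{\ell_2(X)}\le 2\|f_2\|_{\ell_2(X)}+\sqrt{\lambda}\|f\|_H$. Finally, the norm equivalence $\|\yy\|_p\le c_p\|\yy\|_2$ on $\R^N$ promotes this to arbitrary $p$, producing the stated constant $c_p$.

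The computations are all elementary, so there is no genuine analytic obstacle; the one step that carries the whole argument is recognizing that the unmeasurable component $f_1$ is the correct test function in the variational inequality, which converts the mismeasurement error into the manifestly bounded quantity $\|f_2\|_{\ell_2(X)}$. I would also double-check that the norm-equivalence convention matches the paper's definition $c_p=c_{p,2}$, so that $\|\yy\|_p\le c_p\|\yy\|_2$ is applied in the correct direction.
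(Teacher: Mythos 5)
Your proposal is correct and follows essentially the same argument as the paper: both use $f_1$ as the competitor in the variational inequality $J(Q_{X,\lambda}f)\le J(f_1)=\|f_2\|_{\ell_2(X)}^2+\lambda\|f_1\|_H^2$, then extract the two bounds via $\sqrt{a^2+b^2}\le a+b$, the triangle inequality $\|f_1-Q\|_{\ell_2(X)}\le\|f_2\|_{\ell_2(X)}+\|f-Q\|_{\ell_2(X)}$, the projection bound $\|f_1\|_H\le\|f\|_H$, and the equivalence $\|\cdot\|_p\le c_p\|\cdot\|_2$. The only (immaterial) difference is that you apply the $\ell_p$--$\ell_2$ equivalence at the end rather than at the start.
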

\begin{proof}
  We set $s_1:=Q_{X,\lambda}f\in H_1$ to simplify the notation. Then, using
  $f(\xx_j)-f_1(\xx_j)=f_2(\xx_j)$, we have
  \[
  \lambda\|s_1\|_H^2 \le J(s_1) \le J(f_1) \le \|f_2\|_{\ell_2(X)}^2+\lambda
  \|f_1\|_H^2.
  \]
  Using the monotonicity of the square root and the obvious bound
  $\|f_1\|_H\le \|f\|_H$  yields the first statement. 
  For the second statement, we   proceed as follows. We start with the obvious bound
  \begin{equation}\label{triangle1}
    \|f_1-s_1\|_{\ell_p(X)}  \le  c_p \|f_1-s_1\|_{\ell_2(X)} \le c_p\left(\|f-s_1\|_{\ell_2(X)} +
    \|f_2\|_{\ell_2(X)}\right) 
    \end{equation}
    and then use again 
\[
    \|f-s_1\|_{\ell_2(X)}^2  \le   J(s_1) 
     \le   J(f_1) \le \|f_2\|_{\ell_2(X)}^2 +
    \lambda \|f_1\|_H^2
\]
Monotonicity of the square root together with (\ref{triangle1}) yields
the second statement.
\end{proof}

As mentioned above, we want to apply these estimates in the context of
sampling inequalities. However, in contrast to the above cited
sources, these sampling inequalities have to hold on $H_1$ and not on the
whole space $H$. Before discussing this in two specific applications,
namely the case of standard and the case of mixed regularity Sobolev
spaces, we formulate a general, generic convergence result.

\begin{theorem}\label{thm:genericconv}
Let $H$ be a reproducing kernel Hilbert space of functions
$f:\Omega\to\R$ with reproducing kernel $H$. Let $H_1$ be a closed
subspace with reproducing kernel
$K_1$. Let $1\le p,q\le\infty$. Assume that there is a constant $C>0$ and two 
functions $F_1,F_2:\N\to [0, \infty)$  such that on $H_1$ a {\em
sampling inequality} of the form 
\begin{equation}\label{genericsamplinginequality}
\|f_1\|_{L_q(\Omega)} \le C \left[ F_1(N) \|f_1\|_H + F_2(N)\|f_1\|_{\ell_p(X)}\right], \qquad
f_1\in H_1,
\end{equation}
holds. Then, the error between any $f=f_1+f_2\in H$ and
$Q_{X,\lambda}f$ can be bounded by
\begin{eqnarray*}
\|f-Q_{X,\lambda} f\|_{L_q(\Omega)} &\le&
\|f_2\|_{L_q(\Omega)} + C
\left[2F_1(N)+c_pF_2(N)\sqrt{\lambda}\right]\|f\|_H\\
& & \mbox{} +
  C \left(\frac{F_1(N)}{\sqrt{\lambda}} +2c_pF_2(N)\right) \|f_2\|_{\ell_2(X)}.
\end{eqnarray*}
  \end{theorem}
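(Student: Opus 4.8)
The plan is to split the error in the natural way suggested before the statement, namely
\[
f - Q_{X,\lambda}f = f_2 + (f_1 - Q_{X,\lambda}f),
\]
and to apply the triangle inequality in $L_q(\Omega)$ so as to peel off the contribution $\|f_2\|_{L_q(\Omega)}$, which appears unchanged in the claimed bound. This reduces everything to estimating the single term $\|f_1 - Q_{X,\lambda}f\|_{L_q(\Omega)}$, and for that I would invoke the assumed sampling inequality \eqref{genericsamplinginequality}. The one structural point that genuinely must be checked is that the difference $g := f_1 - Q_{X,\lambda}f$ actually lies in $H_1$: indeed $f_1 = Pf$ is the orthogonal projection of $f$ onto $H_1$, while $Q_{X,\lambda}f \in H_1$ by its definition as the minimizer of $J$ over $H_1$. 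Since the sampling inequality is only assumed to hold on $H_1$, this membership is exactly what licenses its use here, so I would emphasize it explicitly.

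Applying \eqref{genericsamplinginequality} to $g$ gives
\[
\|g\|_{L_q(\Omega)} \le C\left[F_1(N)\|g\|_H + F_2(N)\|g\|_{\ell_p(X)}\right],
\]
and the two factors $\|g\|_H$ and $\|g\|_{\ell_p(X)}$ are precisely the quantities controlled by Proposition~\ref{prop:bounds}. For the Hilbert-space norm I would use the triangle inequality together with $\|f_1\|_H = \|Pf\|_H \le \|f\|_H$ and the first bound of Proposition~\ref{prop:bounds} to get
\[
\|g\|_H \le \|f_1\|_H + \|Q_{X,\lambda}f\|_H \le 2\|f\|_H + \frac{1}{\sqrt{\lambda}}\|f_2\|_{\ell_2(X)}.
\]
For the discrete norm the second bound of Proposition~\ref{prop:bounds} applies verbatim, yielding
\[
\|g\|_{\ell_p(X)} \le c_p\left(2\|f_2\|_{\ell_2(X)} + \sqrt{\lambda}\,\|f\|_H\right).
\]

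The final step is purely algebraic: substitute these two estimates into the sampling inequality and regroup. Collecting the coefficients of $\|f\|_H$ gives $C\left[2F_1(N) + c_p F_2(N)\sqrt{\lambda}\right]$, and collecting those of $\|f_2\|_{\ell_2(X)}$ gives $C\left(F_1(N)/\sqrt{\lambda} + 2c_p F_2(N)\right)$; adding back the term $\|f_2\|_{L_q(\Omega)}$ from the initial triangle inequality reproduces the asserted bound exactly. I do not anticipate any real obstacle in this argument — it is a matter of careful bookkeeping rather than a new idea — and the only genuinely substantive points are the verification that $g \in H_1$ and the correct matching of the constants, both of which I would state explicitly to keep the constant-tracking transparent.
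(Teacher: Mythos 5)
Your proposal is correct and follows essentially the same route as the paper's proof: the same splitting $f-Q_{X,\lambda}f = f_2 + (f_1-Q_{X,\lambda}f)$, application of the sampling inequality to $f_1-Q_{X,\lambda}f\in H_1$, and the two bounds of Proposition~\ref{prop:bounds} with identical bookkeeping of constants. Your explicit verification that $f_1-Q_{X,\lambda}f\in H_1$ is a point the paper leaves implicit, but it is not a different argument.
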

\begin{proof}
We start with the obvious bound
\[
\|f-Q_{X,\lambda}f\|_{L_q(\Omega)} 
\le \|f_1-Q_{X,\lambda} f\|_{L_q(\Omega)} + \|f_2\|_{L_q(\Omega)}.
\]
Applying the sampling inequality to $f_1-Q_{X,\lambda}f\in
H_1$ yields
\[
\|f_1-Q_{X,\lambda}f\|_{L_q(\Omega)} \le C \left( F_1(N) \|f_1-Q_{X,\lambda}f\|_H
+ F_2(N)\|f_1-Q_{X,\lambda} f\|_{\ell_p(X)}\right).
\]
Then, using $\|f_1-Q_{X,\lambda} f\|_H \le \|f\|_H + \|Q_{X,\lambda}
f\|_H$ and the two bounds from Proposition \ref{prop:bounds} gives
\begin{eqnarray*}
  \|f_1-Q_{X,\lambda} f\|_{L_p(\Omega)} &\le&
  C F_1(N)\left(2\|f\|_H +
  \frac{1}{\sqrt{\lambda}}\|f_2\|_{\ell_2(X)}\right)\\
&&\mbox{}  + C F_2(N) c_p\left(2\|f_2\|_{\ell_2(X)} + \sqrt{\lambda} \|f\|_H\right).
\end{eqnarray*}
Rearranging the terms in the last expression finally leads to the stated
bound.
\end{proof}

\begin{remark}
In general, the function $F_1$ in the above theorem satisfies
$F_1(N)\to 0$ for $N\to\infty$, enforcing convergence. However, the
asymptotic behavior of the function $F_2$ depends on the
chosen situation. For example, in the case of Sobolev functions, see
Lemma \ref{lem:sampling} below, depending on $p,q$, it is either
constant or goes to zero, as well. In the case of mixed regularity
Sobolev spaces, see Theorem \ref{thm:samplingLambdaSobmixed}, $F_2$ usually grows
asymptotically like a fixed power of $\log(N)$.
\end{remark}

The above bound can be further simplified by choosing the smoothing
parameter $\lambda$ as $\sqrt{\lambda}=F_1(N)/F_2(N)$. If we also use
$\|f_2\|_{\ell_2(X)} \le \sqrt{N} \|f_2\|_{\ell_\infty(\Omega)}$ this
then gives the following result.

\begin{corollary} \label{cor:genapproxresult}
  Under the assumptions of Theorem
  \ref{thm:genericconv} and with  $\sqrt{\lambda}:= F_1(N)/F_2(N)$, the
  error between $f=f_1+f_2$ and $Q_{X,\lambda} f$ can be bounded by
  \[
  \|f-Q_{X,\lambda}f\|_{L_q(\Omega)} \le \|f_2\|_{L_q(\Omega)} + 
  2C(1+c_p) \left[F_1(N) \|f\|_H + F_2(N)
    \|f_2\|_{\ell_2(X)}\right].
  \]
  In particular, for $p=q=\infty$ this yields the bound
  \[
  \|f-Q_{X,\lambda} f\|_{L_\infty(\Omega)} \le \widetilde{C}\left[
    F_1(N) \|f\|_H + \sqrt{N} F_2(N)\|f_2\|_{L_\infty(\Omega)}\right].
    \]
\end{corollary}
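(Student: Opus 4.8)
The plan is to treat the statement as an algebraic specialization of Theorem \ref{thm:genericconv}; no new analytic ingredient is required beyond the elementary norm comparison $\|f_2\|_{\ell_2(X)} \le \sqrt{N}\,\|f_2\|_{L_\infty(\Omega)}$ recalled just before the statement. First I would write down the generic estimate of Theorem \ref{thm:genericconv},
\[
\|f-Q_{X,\lambda}f\|_{L_q(\Omega)} \le \|f_2\|_{L_q(\Omega)} + C\bigl[2F_1(N)+c_pF_2(N)\sqrt{\lambda}\bigr]\|f\|_H + C\Bigl(\tfrac{F_1(N)}{\sqrt{\lambda}}+2c_pF_2(N)\Bigr)\|f_2\|_{\ell_2(X)},
\]
and substitute the prescribed $\sqrt{\lambda}=F_1(N)/F_2(N)$ (legitimate since $F_2(N)>0$ in all cases of interest). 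The mixed products then collapse: $c_pF_2(N)\sqrt{\lambda}=c_pF_1(N)$ and $F_1(N)/\sqrt{\lambda}=F_2(N)$, so the coefficient of $\|f\|_H$ becomes $C(2+c_p)F_1(N)$ and that of $\|f_2\|_{\ell_2(X)}$ becomes $C(1+2c_p)F_2(N)$.

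The next step is a purely numerical relaxation of these two constants. Because $c_p\ge 0$, both $2+c_p$ and $1+2c_p$ are dominated by $2(1+c_p)$; replacing each coefficient by this common value and factoring it out of the two terms reproduces the first claimed inequality verbatim.

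For the specialized bound I would put $p=q=\infty$. The relevant equivalence constant is $c_\infty=c_{\infty,2}=1$, so the inequality just proved becomes
\[
\|f-Q_{X,\lambda}f\|_{L_\infty(\Omega)} \le \|f_2\|_{L_\infty(\Omega)} + 4C\bigl[F_1(N)\|f\|_H + F_2(N)\|f_2\|_{\ell_2(X)}\bigr].
\]
Inserting $\|f_2\|_{\ell_2(X)}\le\sqrt{N}\,\|f_2\|_{L_\infty(\Omega)}$ turns the last summand into $4C\sqrt{N}F_2(N)\|f_2\|_{L_\infty(\Omega)}$, which is already of the advertised shape. The only genuinely delicate point---the single place where something beyond bookkeeping is needed---is the leftover standalone term $\|f_2\|_{L_\infty(\Omega)}$. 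To absorb it into $\sqrt{N}F_2(N)\|f_2\|_{L_\infty(\Omega)}$ and end up with one constant $\widetilde{C}$ that does not depend on $N$, one needs $\sqrt{N}F_2(N)$ bounded away from zero. This is harmless in every setting treated later (for Sobolev spaces $F_2$ is bounded below, and for mixed regularity it grows like a power of $\log N$), so under the mild normalization $\sqrt{N}F_2(N)\ge 1$ we may estimate $\|f_2\|_{L_\infty(\Omega)}\le \sqrt{N}F_2(N)\|f_2\|_{L_\infty(\Omega)}$, combine the two $L_\infty$-terms, and take $\widetilde{C}=1+4C$. I expect this absorption, rather than any of the preceding algebra, to be the step where an implicit assumption on the growth of $F_2$ has to be acknowledged.
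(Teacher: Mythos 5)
Your proof is correct and follows exactly the route the paper intends: the paper gives no separate argument for this corollary, obtaining it precisely as you do by substituting $\sqrt{\lambda}=F_1(N)/F_2(N)$ into Theorem \ref{thm:genericconv}, relaxing the coefficients $C(2+c_p)$ and $C(1+2c_p)$ to $2C(1+c_p)$, and invoking $\|f_2\|_{\ell_2(X)}\le\sqrt{N}\,\|f_2\|_{L_\infty(\Omega)}$. Your closing observation is also apt: absorbing the leftover standalone term $\|f_2\|_{L_\infty(\Omega)}$ into $\widetilde{C}\sqrt{N}F_2(N)\|f_2\|_{L_\infty(\Omega)}$ tacitly requires $\sqrt{N}F_2(N)$ to be bounded away from zero, a condition the paper never states but which does hold in both of its applications (in the Sobolev case $F_2(N)\sim N^{-1/\gamma}$ with $\gamma\ge 2$, so $\sqrt{N}F_2(N)\gtrsim 1$, and in the mixed-regularity case $F_2$ grows like a power of $\log N$).
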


The last bound in this corollary shows that in approximating $f\in H$,
the choice of the sub-space $H_1$ might depend on the desired accuracy
and the desired number of samples. To be more precise, the general procedure
for approximating  a function $f\in H$ would theoretically be as follows.
\begin{enumerate}
\item Choose the sample size $N\in \N$ such that $\widetilde{C}
  F_1(N)\|f\|_H<\epsilon/2$.
\item Choose $H_1$ such that
  $\widetilde{C}\sqrt{N}F_2(N)\|f_2\|_{\ell_\infty(\Omega)}\le
  \epsilon/2$.
\end{enumerate}
We will later see, that in some cases it is possible to reverse the
order, i.e. to first choose $H_1$ independently of $N$ and then choose
the sampling set. However, we will also see that it might sometimes be
necessary to choose both connectedly.

To apply this generic result, we need sampling inequalities to derive
error bounds. To this end, we have to 
specify the underlying reproducing kernel Hilbert space $H$. We will
do this for standard and mixed regularity Sobolev spaces. While we
will directly apply the result to mixed regularity Sobolev spaces, we
will use a modification of the above result in the case of standard
Sobolev spaces to avoid the $\sqrt{N}$ term in Corollary
\ref{cor:genapproxresult} altogether.

\subsection{Application in Sobolev Spaces}

In this first application, the reproducing kernel Hilbert space will
be $H^\sigma(\Omega)$ with $\sigma>d/2$. As usual, $H^k(\Omega)$ consists of 
all $f\in L_2(\Omega)$ having weak derivatives $D^\aalpha u\in L_2(\Omega)$
for all $\aalpha\in\N_0^d$ with $|\aalpha|=\|\aalpha\|_1\le k$. The
norm on this space is defined in the usual way by
\[
\|f\|_{H^k(\Omega)}^2 := \sum_{\ell =0}^k |f|_{H^\ell(\Omega)}^2, \qquad
|f|_{H^\ell(\Omega)}^2 :=\sum_{|\aalpha|=\ell} \|D^\aalpha
f\|_{L_2(\Omega)}^2.
\]
If $\Omega$ is a bounded domain with
Lipschitz boundary or $\Omega=\R^d$, this definition can be extended
to also define fractional-order Sobolev spaces $H^\sigma(\Omega)$ with 
$\sigma\in[0,\infty)$. Finally, it is well-known that by the Sobolev
 embedding theorem, $H^\sigma(\Omega)$ is a reproducing kernel Hilbert
space whenever $\sigma>d/2$ and that a kernel can be constructed by restricting a
reproducing kernel of $H^\sigma(\R^d)$ to $\Omega$ if an equivalent
norm is used. For
low-dimensional domains $\Omega\subseteq\R^d$, sampling inequalities
are usually stated using the fill distance $h_{X,\Omega}$ of a point
set $X=\{\xx_1,\ldots,\xx_N\}\subseteq\Omega$, which we recall now
together with the separation radius $q_X$ as
\[
h_{X,\Omega}:=\sup_{\xx\in\Omega}\min_{\xx_j\in X} \|\xx-\xx_j\|_2,
\qquad
q_X:= \frac{1}{2}\min_{j\ne k} \|\xx_j-\xx_k\|_2.
\]
The {mesh ratio} $\rho_{X,\Omega}:=h_{X,\Omega}/q_X$ measures how well
and how economically 
the data sites cover the region $\Omega$. 
We will require the following sampling inequality from \cite{Arcangeli-etal-07-1, 
  Arcangeli-etal-12-1, Rieger-Wendland-17-1}.

\begin{lemma}\label{lem:sampling}
Let $\Omega\subseteq\R^d$ be a bounded domain with a Lipschitz
boundary. Let $p,q\in [1,\infty]$. Let $\tau>d/2$ and
$\gamma:=\max\{2,p,q\}$. Then, there exist constants $h_0>0$
(depending on $\Omega$ and $\tau$) and $C>0$ (depending on $\Omega$,
$\tau$ and $q$) such that for all $X=\{\xx_1,\ldots,\xx_N\}\subseteq
\Omega$ with $h=h_{X,\Omega}\le h_0$ and all $f\in H^\tau(\Omega)$, we have
\[
\|f\|_{L_q(\Omega)} \le C\left(h^{\tau-d(1/2-1/q)_+} |f|_{H^\tau(\Omega)} +
h^{d/\gamma} \|f\|_{\ell_p(X)}\right).
\]
\end{lemma}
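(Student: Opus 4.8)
The plan is to localize the estimate onto patches of diameter $\sim h$ and reduce to a fixed reference domain by scaling, the standard route for scattered-data sampling inequalities. Since $\Omega$ is a bounded Lipschitz domain, for $h=h_{X,\Omega}$ below a threshold $h_0$ one can cover $\Omega$ by $M_h\sim h^{-d}$ patches $\Omega_i=\Omega\cap B(\xi_i,c_1h)$ of uniformly bounded overlap, each of which, after rescaling to unit size, satisfies a fixed interior cone condition (provided by the Lipschitz boundary) and contains a subset $Y_i=X\cap\Omega_i$ of the data that becomes a \emph{norming set}. The norming property is exactly where $h$ being the fill distance is used: after dilating $\Omega_i$ to the reference domain, the local fill distance falls below a fixed threshold once $c_1$ is chosen large enough.

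First I would prove the estimate on a fixed reference domain $\widehat\Omega$ (the unit ball for interior patches): for any point set $\widehat Y$ with fill distance below the norming threshold,
\[
\|\widehat f\|_{L_q(\widehat\Omega)} \le \widehat C\left(|\widehat f|_{H^\tau(\widehat\Omega)} + \|\widehat f\|_{\ell_p(\widehat Y)}\right), \qquad \widehat f\in H^\tau(\widehat\Omega).
\]
This is a Bramble--Hilbert argument: with a bounded projection $\Pi$ onto polynomials of degree below $\tau$ one has $\|\widehat f-\Pi\widehat f\|_{H^\tau(\widehat\Omega)}\le C|\widehat f|_{H^\tau(\widehat\Omega)}$, and since $\tau>d/2$ the Sobolev embedding $H^\tau(\widehat\Omega)\hookrightarrow L_\infty(\widehat\Omega)$ controls both $\|\widehat f-\Pi\widehat f\|_{L_q(\widehat\Omega)}$ and $\|\widehat f-\Pi\widehat f\|_{\ell_p(\widehat Y)}$ by the same seminorm. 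The polynomial part is handled by equivalence of the finitely many norms on the fixed, finite-dimensional polynomial space, the norming-set property making $\|\cdot\|_{\ell_p(\widehat Y)}$ a norm there. On this fixed domain every dependence on $p$ and $q$ is absorbed into constants.

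Next I would transfer this to each patch by the dilation $\widehat x\mapsto\xi_i+r\widehat x$ with $r=c_1h$, tracking how the norms scale: $\|f\|_{L_q(\Omega_i)}=r^{d/q}\|\widehat f\|_{L_q(\widehat\Omega)}$, $\;|\widehat f|_{H^\tau(\widehat\Omega)}=r^{\tau-d/2}|f|_{H^\tau(\Omega_i)}$, and the point values are invariant. Substituting into the reference inequality gives the local bound
\[
\|f\|_{L_q(\Omega_i)} \le \widehat C\left(r^{\tau-d(1/2-1/q)}|f|_{H^\tau(\Omega_i)} + r^{d/q}\|f\|_{\ell_p(Y_i)}\right).
\]

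The final and most delicate step is reassembly. Summing $q$-th powers over the covering and using bounded overlap gives $\|f\|_{L_q(\Omega)}^q\le\sum_i\|f\|_{L_q(\Omega_i)}^q$. For the seminorm contribution I would compare $\sum_i|f|_{H^\tau(\Omega_i)}^q$ with $|f|_{H^\tau(\Omega)}^q$ through a power-mean inequality: for $q\ge 2$ nothing is lost and the exponent stays $\tau-d(1/2-1/q)$, while for $q<2$ the factor $M_h\sim h^{-d}$ produced by Hölder's inequality raises the effective exponent to $\tau$, which is precisely $\tau-d(1/2-1/q)_+$. The same bookkeeping applied to $\sum_i\|f\|_{\ell_p(Y_i)}^q$ yields data exponent $d/\max\{p,q\}$, which one weakens to the stated $d/\gamma$ with $\gamma=\max\{2,p,q\}$ (harmless since $h\le h_0\le 1$); alternatively, deriving the general case from the $L_2$ and $\ell_2$ master inequality via Hölder on $\Omega$ together with the embedding $\ell_p\hookrightarrow\ell_2$ makes the threshold $2$ appear directly. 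I expect this exponent bookkeeping --- rather than the reference inequality --- to be the main obstacle, as it is what forces both the positive part in the seminorm exponent and the $\max\{2,p,q\}$ structure in the data term.
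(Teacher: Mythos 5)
The paper does not prove this lemma at all: it is quoted verbatim from the cited literature (Arcang\'eli--L\'opez de Silanes--Torrens 2007/2012 and Rieger--Wendland 2017), so there is no internal proof to compare against. Your sketch reconstructs essentially the argument used in those references --- localization to patches of diameter $\sim h$, a Bramble--Hilbert/norming-set estimate on a reference configuration, affine scaling, and H\"older-type reassembly --- and your exponent bookkeeping is correct: summing $q$-th powers gives the seminorm exponent $\tau-d(1/2-1/q)$ for $q\ge 2$ and, via the factor $M_h^{1-q/2}\sim h^{-d(1-q/2)}$, the exponent $\tau$ for $q<2$, which is exactly $\tau-d(1/2-1/q)_+$; similarly the data term comes out as $h^{d/\max\{p,q\}}\le h^{d/\gamma}$ for $h\le 1$.

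Three points you should tighten. First, the local data set cannot literally be $Y_i=X\cap\Omega_i$: a point of $\Omega_i$ near $\partial B(\xi_i,c_1h)$ may have its nearest data point just outside the patch, so one must take $Y_i=X\cap B(\xi_i,(c_1+1)h)\cap\Omega$ (or enlarge the balls for the data while keeping the smaller balls as the $L_q$-cover); this is harmless for the bounded-overlap counting but needed for the norming-set property. Second, the boundary patches are \emph{not} affine copies of one fixed reference domain, so the reference inequality must be proved with a constant that is uniform over the whole family of rescaled sets $\Omega\cap B(\xi_i,c_1h)$; this is where the Lipschitz boundary (uniform interior cone condition with radius proportional to $h$) genuinely enters, and it is the technical core of the cited proofs (Duchon-style arguments over cones/star-shaped sets) rather than a cosmetic remark. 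Third, your parenthetical alternative --- deriving the general case from the $L_2$/$\ell_2$ inequality by H\"older --- fails for $q>2$, since on a bounded domain H\"older only bounds $L_q$ by $L_2$ when $q\le 2$; discard that aside and keep the patchwise route, which does carry the full range $1\le p,q\le\infty$ (with the $q=\infty$ and $p=\infty$ cases handled by maxima instead of sums).
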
  

We are particularly interested in sub-spaces $H_1$ of
$H=H^\sigma(\Omega)$, which contain functions having a
$\Lambda$-representation (\ref{flambda}). To this end, we will
restrict ourselves to $\Omega=[\aa,\bb]\subseteq\R^d$ and use the following
result from \cite{Rieger-Wendland-24-1}.

\begin{lemma}
Let $\Lambda\subseteq \cP(\mfD)$ be a downward closed set of subsets of
  $\mfD=\{1,\ldots,d\}$. Let $[\aa,\bb]\subseteq\R^d$ be a closed
interval and let $\sigma>d/2$. Let
$H^\sigma_\Lambda([\aa,\bb])$ be the set of all functions $f\in 
  H^\sigma([\aa,\bb])$ having a $\Lambda$-representation
  (\ref{flambda}). Then, $H^\sigma_\Lambda([\aa,\bb])$  is a closed
  sub-space of $H^\sigma([\aa,\bb])$. 
\end{lemma}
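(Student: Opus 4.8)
The plan is to exhibit $H^\sigma_\Lambda([\aa,\bb])$ as an intersection of kernels of bounded linear functionals on the reproducing kernel Hilbert space $H^\sigma([\aa,\bb])$; since any such intersection is automatically a closed linear subspace, both the subspace and the closedness assertion follow simultaneously. Linearity is in any case immediate: adding two $\Lambda$-representations termwise, and scaling termwise, again produces a $\Lambda$-representation, because $\Lambda$ is fixed and the condition ``$g_\mfu$ depends only on the variables in $\mfu$'' is preserved under linear combinations.

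First I would record the analytic input. Because $\sigma>d/2$, the Sobolev embedding theorem gives $H:=H^\sigma([\aa,\bb])\hookrightarrow C[\aa,\bb]$ and makes $H$ a reproducing kernel Hilbert space, so each point-evaluation functional $f\mapsto f(\yy)$, $\yy\in[\aa,\bb]$, is bounded on $H$, and the anchored decomposition of Theorem \ref{thm:decomposition} applies to every $f\in H$. For a fixed anchor $\cc$, a subset $\mfw\subseteq\mfD$, and a point $\xx\in[\aa,\bb]$, formula \eqref{eq:anova2} expresses the value $f_{\mfw;\cc}(\xx_\mfw)=\sum_{\mfv\subseteq\mfw}(-1)^{\#\mfw-\#\mfv}f((\xx;\cc)_\mfv)$ as a finite linear combination of point evaluations of $f$; hence the functional $L_{\mfw,\xx}\colon f\mapsto f_{\mfw;\cc}(\xx_\mfw)$ is bounded on $H$.

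The heart of the argument is the characterization
\[
f\in H^\sigma_\Lambda([\aa,\bb])\iff f\in H\ \text{and}\ f_{\mfw;\cc}\equiv 0\ \text{for every}\ \mfw\notin\Lambda .
\]
For the forward implication, suppose $f=\sum_{\mfu\in\Lambda}g_\mfu$ is a $\Lambda$-representation and view it as a full decomposition \eqref{fulldecomposition} with $g_\mfu=0$ for $\mfu\notin\Lambda$. Fixing $\mfw\notin\Lambda$, downward closedness of $\Lambda$ shows that every $\mfu\supseteq\mfw$ also lies outside $\Lambda$, so $g_\mfu=0$ for all such $\mfu$; the last property of Theorem \ref{thm:decomposition} (applied with the subset $\mfw$ in the role of $\mfv$) then forces $f_{\mfu;\cc}=0$ for all $\mfu\supseteq\mfw$, and in particular $f_{\mfw;\cc}\equiv 0$. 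Conversely, if all anchored components indexed outside $\Lambda$ vanish, then the anchored decomposition \eqref{eq:anchor} collapses to $f=\sum_{\mfu\in\Lambda}f_{\mfu;\cc}$, which is a $\Lambda$-representation, so $f\in H^\sigma_\Lambda([\aa,\bb])$.

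Combining the two ingredients, I would conclude
\[
H^\sigma_\Lambda([\aa,\bb])=\bigcap_{\mfw\notin\Lambda}\ \bigcap_{\xx\in[\aa,\bb]}\ker L_{\mfw,\xx},
\]
an intersection of kernels of bounded linear functionals, hence a closed linear subspace of $H$. The step I expect to demand the most care is the forward direction of the characterization: one must use downward closedness precisely to upgrade ``$g_\mfw=0$'' to ``$g_\mfu=0$ for all $\mfu\supseteq\mfw$'', which is exactly the hypothesis required by the vanishing property of Theorem \ref{thm:decomposition}. Once this is in place, the boundedness of each $L_{\mfw,\xx}$ and the closedness of the resulting intersection are formal consequences of the RKHS structure.
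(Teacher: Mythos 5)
Your proof is correct. Note that the paper itself gives no proof of this lemma but imports it from \cite{Rieger-Wendland-24-1}, so there is no in-paper argument to compare against; your argument is the natural one behind the cited result and every step checks out. In particular, the two ingredients are handled properly: the characterization $f\in H^\sigma_\Lambda([\aa,\bb])\iff f_{\mfw;\cc}\equiv 0$ for all $\mfw\notin\Lambda$ (where the forward direction correctly uses downward closedness of $\Lambda$ to ensure $g_\mfu=0$ for \emph{all} $\mfu\supseteq\mfw$, which is exactly the hypothesis of the last bullet of Theorem \ref{thm:decomposition}, and the backward direction uses the anchored decomposition itself), and the fact that each functional $f\mapsto f_{\mfw;\cc}(\xx_\mfw)$ is a finite linear combination of point evaluations, hence bounded on $H^\sigma([\aa,\bb])$ since $\sigma>d/2$ makes it a reproducing kernel Hilbert space; the representation of $H^\sigma_\Lambda([\aa,\bb])$ as an intersection of kernels of these functionals then yields both linearity and closedness at once.
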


By Theorem \ref{thm:decomposition}, any function $f\in
H_\Lambda^\sigma([\aa,\bb])$ also has an anchored
$\Lambda$-decomposition. 

To apply the general theory from above, we need a sampling inequality
for $H^\sigma_\Lambda([\aa,\bb])$. We will use a generalization
of \cite[Theorem 4.5]{Rieger-Wendland-24-1}, extending that result to
other $L_p$-norms. In contrast to standard sampling inequalities for
Sobolev spaces, a sampling inequality for $H_\Lambda^\sigma([\aa,\bb])$
only holds for specific point sets, exploiting the anchored structure
of the functions for which it should hold.

\begin{definition}\label{def:pointset}
Let $\Lambda\subseteq \cP(\mfD)$ be downward closed. For each $\mfu\in
\Lambda$ choose a point set 
$
\widetilde{X}_{\mfu} = \left\{\widetilde{\xx}_1^{(\mfu)}, \ldots,
\widetilde{\xx}_{N_{\mfu}}^{(\mfu)}\right\}\subseteq[\aa_\mfu,\bb_\mfu]
$ and, using an anchor $\cc\in[\aa,\bb]$,  extend the points of this
  set to obtain the anchored set
$
X_{\mfu} = \left\{(\widetilde{\xx};\cc)_\mfu : \widetilde{\xx}\in \widetilde{X}_{\mfu}\right\}$
Then, a {\em sampling point set for Sobolev $\Lambda$-functions in
  $\Omega\subseteq\R^d$} is given by
\[
X_\Lambda^{(d)}=\bigcup_{\mfu\in\Lambda} X_\mfu,
\]
its {\em fill distance} is defined by
\[
h_{X_\Lambda^{(d)}}:= \max_{\mfu\in\Lambda}
h_{\widetilde{X}_\mfu,[\aa_\mfu,\bb_\mfu]}.
\]
\end{definition}
The following result is the required generalization of the sampling
inequality from \cite{Rieger-Wendland-24-1}.

\begin{theorem}\label{thm:samplingSobolevNew}
Let $[\aa,\bb]\subseteq\R^d$. Let $\Lambda\subseteq\cP(\mfD)$ be
downwards closed. Let $\sigma>d/2$ and $1\le p,q\le \infty$. Let
$\alpha:=\max\{2,q\}$ and $\gamma:=\max\{2,p,q\}$. Then, there are constants
$h_0,C>0$ such that 
\begin{equation}\label{sobolevbnd1}
\|f\|_{L_q([\aa,\bb])} \le C\sum_{\mfv \in \Lambda}\left(
h_{\widetilde{X}_\mfv,[\aa_\mfv,\bb_\mfv]}^{\sigma-\frac{d}{2}+\frac{\#\mfv}{\alpha}}
\|f\|_{H^\sigma([\aa,\bb])} 
+ h_{\widetilde{X}_\mfv,[\aa_\mfv,\bb_\mfv]}^{\frac{\#\mfv}{\gamma}}\|f\|_{\ell_p(X_\mfv)}\right)
\end{equation}
for all $f\in H^\sigma_\Lambda([\aa,\bb])$ and all
sampling point sets $X_\Lambda^{(d)}\subseteq [\aa,\bb]$ with
$h_{X_\Lambda^{(d)}}\le h_0$.
If the low-dimensional sets $\widetilde{X}_\mfv$
are chosen quasi-uniformly, i.e. if there is a constant $c_\rho>0$
such that $\rho_{\widetilde{X}_\mfu ,[\aa_\mfu,\bb_\mfu]}\le c_\rho$
for all $\mfu\in \Lambda$ then
\begin{equation}\label{sobolevbnd2}
\|f\|_{L_2([\aa,\bb])} \le C \left(
\sum_{\mfv\in\Lambda}
h_{\widetilde{X}_\mfv,[\aa_\mfv,\bb_\mfv]}^{\sigma-d/2+\#\mfv/2}
\|f\|_{H^\sigma([\aa,\bb])} +
\left(\sum_{\mfv\in \Lambda} \frac{1}{\# X_\mfv} \|f\|_{\ell_2(X_\mfv)}^2\right)^{1/2}\right).
\end{equation}
The constant $C>0$ depends on $\aa,\bb$ and the cardinality of $\Lambda$.
\end{theorem}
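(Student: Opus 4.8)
The plan is to reduce the $d$-dimensional estimate to the low-dimensional sampling inequality of Lemma~\ref{lem:sampling} applied separately to each anchored component. Since $f\in H^\sigma_\Lambda([\aa,\bb])$, Theorem~\ref{thm:decomposition} supplies the anchored decomposition $f=\sum_{\mfu\in\Lambda}f_{\mfu;\cc}$, and the triangle inequality in $L_q$ gives $\|f\|_{L_q([\aa,\bb])}\le\sum_{\mfu\in\Lambda}\|f_{\mfu;\cc}\|_{L_q([\aa,\bb])}$. Each $f_{\mfu;\cc}$ is constant in the variables outside $\mfu$, so the $L_q$-version of the identity (\ref{constantintegral}) converts its $d$-dimensional $L_q$-norm into the $\#\mfu$-dimensional norm $\|f_{\mfu;\cc}\|_{L_q([\aa_\mfu,\bb_\mfu])}$, up to a factor depending only on $\aa,\bb$. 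To this norm I would apply Lemma~\ref{lem:sampling} on $[\aa_\mfu,\bb_\mfu]\subseteq\R^{\#\mfu}$ with smoothness $\tau=\sigma$ (admissible because $\sigma>d/2\ge\#\mfu/2$) and sampling set $\widetilde{X}_\mfu$, which is where the hypothesis $h_{X_\Lambda^{(d)}}\le h_0$ enters.

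For the resulting seminorm term I would undo the dimension reduction: since differentiation in the directions outside $\mfu$ annihilates $f_{\mfu;\cc}$, the same constant-in-extra-variables argument relates $|f_{\mfu;\cc}|_{H^\sigma([\aa_\mfu,\bb_\mfu])}$ to $|f_{\mfu;\cc}|_{H^\sigma([\aa,\bb])}$ up to an $\aa,\bb$-factor. Bounding this by $\|f_{\mfu;\cc}\|_{H^\sigma([\aa,\bb])}$ and invoking boundedness of the anchored projection $f\mapsto f_{\mfu;\cc}$ on $H^\sigma([\aa,\bb])$ then produces the factor $\|f\|_{H^\sigma([\aa,\bb])}$. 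The exponent that Lemma~\ref{lem:sampling} supplies is $\sigma-\#\mfu(1/2-1/q)_+$; since $h_{\widetilde{X}_\mfu,[\aa_\mfu,\bb_\mfu]}\le h_0\le 1$ and $\#\mfu\le d$, this is bounded below by $\sigma-\tfrac{d}{2}+\tfrac{\#\mfu}{\alpha}$ in both cases $q\ge 2$ and $q<2$, so replacing it by the latter only weakens the bound and yields the uniform exponent in (\ref{sobolevbnd1}).

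The \emph{main obstacle} is the data term. The low-dimensional inequality delivers $\|f_{\mfu;\cc}\|_{\ell_p(\widetilde{X}_\mfu)}$, which equals $\|f_{\mfu;\cc}\|_{\ell_p(X_\mfu)}$ because $f_{\mfu;\cc}$ ignores the anchored coordinates, whereas (\ref{sobolevbnd1}) is phrased in terms of the genuinely measurable data $\|f\|_{\ell_p(X_\mfv)}$ of the full function. To bridge this I would expand each sample value through the explicit formula (\ref{eq:anova2}), $f_{\mfu;\cc}((\widetilde{\xx};\cc)_\mfu)=\sum_{\mfv\subseteq\mfu}(-1)^{\#\mfu-\#\mfv}f((\widetilde{\xx};\cc)_\mfv)$, and apply the triangle inequality in $\ell_p$. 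The delicate point is that the nodes $(\widetilde{\xx};\cc)_\mfv$ appearing here are anchored extensions of projections of $\widetilde{X}_\mfu$ onto the coordinates in $\mfv$; one must use the anchored construction of the sampling sets in Definition~\ref{def:pointset} to identify these as values of $f$ on the sets $X_\mfv$, and then reindex the double sum over pairs $\mfv\subseteq\mfu\in\Lambda$ as a single sum over $\mfv\in\Lambda$, which is legitimate precisely because $\Lambda$ is downward closed. The finitely many combinatorial multiplicities produced in this step are absorbed into $C$, explaining its stated dependence on $\#\Lambda$.

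For the refined bound (\ref{sobolevbnd2}) I would specialise to $p=q=2$, so that $\alpha=\gamma=2$ and the first inequality becomes a sum of terms $h_{\widetilde{X}_\mfv,[\aa_\mfv,\bb_\mfv]}^{\sigma-d/2+\#\mfv/2}\|f\|_{H^\sigma([\aa,\bb])}$ and $h_{\widetilde{X}_\mfv,[\aa_\mfv,\bb_\mfv]}^{\#\mfv/2}\|f\|_{\ell_2(X_\mfv)}$. Quasi-uniformity of the $\widetilde{X}_\mfv$ yields, through the mesh-ratio bound $\rho_{\widetilde{X}_\mfv,[\aa_\mfv,\bb_\mfv]}\le c_\rho$, the relation $\#X_\mfv=\#\widetilde{X}_\mfv\sim h_{\widetilde{X}_\mfv,[\aa_\mfv,\bb_\mfv]}^{-\#\mfv}$, so each data factor becomes comparable to $(\#X_\mfv)^{-1/2}\|f\|_{\ell_2(X_\mfv)}=\bigl(\tfrac{1}{\#X_\mfv}\|f\|_{\ell_2(X_\mfv)}^2\bigr)^{1/2}$. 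A final Cauchy--Schwarz inequality over the index set $\Lambda$ converts the outer sum of these square roots into the single square root in (\ref{sobolevbnd2}), at the cost of a factor $\sqrt{\#\Lambda}$ that is absorbed into $C$ and accounts for its dependence on the cardinality of $\Lambda$.
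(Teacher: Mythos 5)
Your overall strategy (reduce to Lemma \ref{lem:sampling} in low dimensions, then recombine) is the right one, but the way you set it up contains a genuine gap at its core: you apply the low-dimensional sampling inequality to the anchored components $f_{\mfu;\cc}$ with smoothness $\tau=\sigma$, justified by ``boundedness of the anchored projection $f\mapsto f_{\mfu;\cc}$ on $H^\sigma([\aa,\bb])$.'' This claim is false. The component $f_{\mfu;\cc}$ is an alternating sum of \emph{traces} $f((\cdot;\cc)_\mfv)$ of $f$ on anchored slices, and by the trace theorem such restrictions land only in $H^{\sigma-(d-\#\mfv)/2}([\aa_\mfv,\bb_\mfv])$, sharply so. To see that this cannot be repaired, take $\Lambda=\cP(\mfD)$, which is downward closed and hence covered by the theorem; then $H^\sigma_\Lambda([\aa,\bb])=H^\sigma([\aa,\bb])$ and $f_{\{1\};\cc}(x_1)=f((x_1;\cc)_{\{1\}})-f(\cc)$, which for a generic $f\in H^\sigma$ is \emph{not} in $H^\sigma([a_1,b_1])$. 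Your own observation that your argument yields a better exponent, $\sigma-\#\mfu(1/2-1/q)_+$, which you then ``weaken'' to the stated one, is the red flag: if your premise held, the theorem would be true with a strictly stronger rate, contradicting the sharpness of the trace embedding. The paper's proof instead expands $f$ directly into the restrictions, $f=\sum_{\mfu\in\Lambda}\sum_{\mfv\subseteq\mfu}(-1)^{\#\mfu-\#\mfv}f((\cdot;\cc)_\mfv)$, and applies Lemma \ref{lem:sampling} to each restriction $f((\cdot;\cc)_\mfv)$ with the trace-reduced smoothness $\tau=\sigma-(d-\#\mfv)/2$ (note that $\tau>\#\mfv/2$ is exactly the hypothesis $\sigma>d/2$); the exponent $\sigma-\tfrac d2+\tfrac{\#\mfv}{\alpha}$ in (\ref{sobolevbnd1}) is precisely this trace loss fed through the lemma, not a cosmetic weakening.

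Your handling of the data term has a second gap, which the restriction-based argument also removes. After expanding $f_{\mfu;\cc}$ at the sample points via (\ref{eq:anova2}), you obtain values $f((\widetilde\xx;\cc)_\mfv)$ where $\widetilde\xx_\mfv$ is the projection of a point of $\widetilde X_\mfu$ onto the coordinates in $\mfv$, and you propose to ``identify these as values of $f$ on the sets $X_\mfv$.'' But Definition \ref{def:pointset} chooses the sets $\widetilde X_\mfv$ independently for each $\mfv\in\Lambda$, with no nestedness or projection-consistency, so these projected points are in general not in $X_\mfv$ and the identification fails. In the paper's argument no bridge is needed: applying the sampling inequality to $f((\cdot;\cc)_\mfv)$ on $[\aa_\mfv,\bb_\mfv]$ with nodes $\widetilde X_\mfv$ produces the data term $\|f((\cdot;\cc)_\mfv)\|_{\ell_p(\widetilde X_\mfv)}=\|f\|_{\ell_p(X_\mfv)}$, which is exactly the measurable data of $f$, by the anchored construction of $X_\mfv$. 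Your derivation of (\ref{sobolevbnd2}) from (\ref{sobolevbnd1}) (quasi-uniformity giving $\#X_\mfv\sim h_\mfv^{-\#\mfv}$, then Cauchy--Schwarz over $\Lambda$) coincides with the paper and is fine, but it sits downstream of the flawed first part.
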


\begin{proof}
We will essentially follow the proof of \cite[Theorem
  4.5]{Rieger-Wendland-24-1} but need a more general sampling
inequality in the low-dimensional setting. Starting with the anchored
representation of any $f\in H^\sigma_\Lambda([\aa,\bb])$, i.e. with
\begin{equation}\label{decomp1}
f = \sum_{\mfu\in\Lambda}\sum_{\mfv\subseteq\mfu} (-1)^{\#\mfu-\#\mfv}
f((\cdot;\cc)_\mfv)
\end{equation}
and recall from \cite{Rieger-Wendland-24-1} the following
properties. First, the components  of  $f\in H^\sigma_\Lambda([\aa,\bb])$
satisfy $f((\cdot;\cc)_\mfv)\in H^{\tau}([\aa_\mfv,\bb_\mfv])$ with
$\tau=\sigma-(d-\#\mfv)/2$. Second, there is a constant
$C_\mfv=C_\mfv(\aa,\bb)$ such that
\(
\|f((\cdot;\cc)_\mfv)\|_{H^\tau([\aa_\mfv,\bb_\mfv])} \le C_\mfv
\|f\|_{H^\sigma([\aa,\bb])}.
\)
Third, we obviously have $\|f((\cdot;\cc)_\mfv)\|_{\ell_p(\widetilde{X}_\mfv)} =
\|f\|_{\ell_p(X_\mfv)}\le \|f\|_{\ell_p(X_\Lambda^{(d)})}$ and 
\[
\|f((\cdot;\cc)_\mfv)\|_{L_p([\aa,\bb])} =
c_p(\aa,\bb,\mfv)\|f((\cdot;\cc)_\mfv)\|_{L_p([\aa_\mfv,\bb_\mfv])},
\]
where the constant is given by
\[
c_p(\aa,\bb,\mfv) = \begin{cases} \prod_{j\in {\mfD\setminus
      \mfv}} (b_j-a_j)^{1/p} & \mbox{ for } 1\le p<\infty,\\
  1 & \mbox{ for } p=\infty.
\end{cases}
\]
With these properties at hand, we can apply the sampling
inequality from Lemma \ref{lem:sampling} to the components to derive
\begin{eqnarray*}
\|f((\cdot;\xx)_\mfv)\|_{L_q([\aa,\bb])}\! &\le&\! C_\mfv\left(
h_\mfv^{\tau -
  \#\mfv\left(\frac{1}{2}-\frac{1}{q}\right)_{+}}
  \|f((\cdot;\cc)_\mfv)\|_{H^\tau([\aa_\mfv,\bb_\mfv])} +
  h_\mfv^\frac{\#\mfv}{\gamma}
  \|f((\cdot;\cc)_\mfv)\|_{\ell_p(\widetilde{X}_\mfv)}\right)\\
  &\le& C_\mfv \left(
h_\mfv^{\sigma-\frac{d}{2}+\frac{\#\mfv}{\max\{2,q\}}}
  \|f\|_{H^\sigma([\aa,\bb])} +
  h_\mfv^\frac{\#\mfv}{\gamma}
  \|f\|_{\ell_p(X_\mfv)}\right),
\end{eqnarray*}
  where we have set
  $h_\mfv:=h_{\widetilde{X}_\mfv,[\aa_{\mfv},\bb_\mfv]}$. Inserting
  the latter bound into (\ref{decomp1}) yields the first error
  estimate in (\ref{sobolevbnd1}). For the second estimate
  (\ref{sobolevbnd2}), we set $p=q=2$ in (\ref{sobolevbnd1}), yielding
  \[
  \|f\|_{L_2([\aa,\bb])} \le C\sum_{\mfv \in \Lambda}\left(
h_\mfv^{\sigma-\frac{d}{2}+\frac{\#\mfv}{2}}\|f\|_{H^\sigma([\aa,\bb])}
+ h_\mfv^{\frac{\#\mfv}{2}}\|f\|_{\ell_2(X_\mfv)}\right).
\]
As the low-dimensional point sets $\widetilde{X}_\mfv$ are
quasi-uniform, we have $\# \widetilde{X}_{\mfv}=\#X_\mfv \le
Ch_\mfv^{-\#\mfv} \le \widetilde{C} \# X_\mfv$
with  constants independent of $\mfv$. This, together with the 
Cauchy-Schwarz inequality, shows 
\[
\sum_{\mfv\in \Lambda}
h_\mfv^{\#\mfv/2} \|f\|_{\ell_2(X_\mfv)} \le
C \sqrt{\#\Lambda} \left(\sum_{\mfv\in \Lambda} \frac{1}{\# X_\mfv}
\|f\|_{\ell_2(X_\mfv)}^2\right)^{1/2},
\]
which then immediately gives the second error bound (\ref{sobolevbnd2}).
\end{proof}

With this, particularly with the second bound (\ref{sobolevbnd2}), we
are able to derive an approximation result for $f\in H^\sigma(\Omega)$
with anticipated small $f_2=f-f_1$, where $f_1\in H^\sigma_\Lambda(\Omega)$.
This result will be a variation of the general result in Corollary
\ref{cor:genapproxresult} with the advantage of avoiding the $\sqrt{N}$
term in front of the discrete $\ell_2$-norm of $f_2$. To formulate it,
we need to modify the approximation operator $Q_{X,\lambda}$ as
follows.

\begin{definition}
Let $\sigma>d/2$ be given. Let $\Lambda\subseteq\cP(\mfD)$ be downward
closed and let 
$X:=X_\Lambda^{(d)}=\cup_{\mfu\in \Lambda} X_\mfu
\subseteq[\aa,\bb]\subseteq\R^d$ be a sampling point 
set for Sobolev $\Lambda$-functions.  For $\lambda>0$  set
\[
J^{(\Lambda)}(s) = J_{X,\lambda,
  f(X)}^{(\Lambda)}:= \sum_{\mfv\in \Lambda}
\frac{1}{\#X_\mfv}\|f-s\|_{\ell_2(X_\mfv)}^2 + \lambda
\|s\|_{H^\sigma([\aa,\bb])}^2, \qquad s\in H^\sigma([\aa,\bb]).
\]
Then, the approximation
$Q^{(\Lambda)}f=Q_{X,\lambda}^{(\Lambda)} f$ is defined as
\[
Q^{(\Lambda)} f:=\argmin_{s\in H_\Lambda^\sigma([\aa,\bb])}
J^{(\Lambda)}(s).
\]
\end{definition}

The operator $Q^{(\Lambda)}$ is indeed  well-defined and linear,
which can be seen as follows. We enumerate
$\Lambda=\{\mfu_1,\ldots,\mfu_Q\}$, set $X=X_\Lambda^{(d)}$, $N=\#X$,
$N_j=\#X_{\mfu_j}$.
and  assume without restriction that $X_\mfu\cap X_\mfv=\emptyset$ for
$\mfu\ne \mfv$.
Then, we can define the block diagonal matrix
$W\in\R^{N\times N}$ by
\[
W = \begin{pmatrix} \frac{1}{N_1} I_{N_1} & & 0\\
  & \ddots & \\
0  & & \frac{1}{N_Q} I_{N_Q}
\end{pmatrix},
\]
where $I_{N_j}\in\R^{N_j\times N_j}$ is the identity matrix,
and rewrite the functional $J^{(\Lambda)}$ as
\[
J^{(\Lambda)} (s) = \left(f(X)-s(X)\right)^\transpose W
\left(f(X)-s(X)\right) + \lambda \|s\|_{H^\sigma([a,b])}^2.
\]
The fact that $Q^{(\Lambda)}$ is well-defined then follows from the
following generalization of the results in Lemma \ref{lem:lem1}. We
include its proof for the convenience of the reader but also refer to
\cite{deBoor-01-2, Kersey-03-1,Reinsch-67-1} for generalizations of
the smoothing spline approach.

\begin{lemma}
Let $H$ be a reproducing kernel Hilbert space with positive definite
kernel $K:\Omega\times\Omega\to\R$   
Let $X=\{\xx_1,\ldots,\xx_N\}\subseteq\Omega$ and
$f(X)=(f(\xx_j))\in\R^N$ for $f\in H$. Given a symmetric and positive definite matrix
$W\in\R^{N\times N}$, the functional
\[
J(s):=(f(X)-s(X))^\transpose W (f(X)-s(X)) + \lambda \|s\|_H^2
\]
has a unique minimum $s^*\in H$, which can be written in the form
$s^*=\aalpha^\transpose K(X,\cdot)$ where the coefficient
$\aalpha\in\R^N$ is given by
\[
\aalpha= (K(X,X)+\lambda W^{-1})^{-1} f(X).
\]
\end{lemma}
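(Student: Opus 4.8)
The plan is to adapt the classical representer-theorem argument to the presence of the weighting matrix $W$. First I would show that any minimizer must lie in the finite-dimensional subspace $V:=\spn\{K(\cdot,\xx_1),\ldots,K(\cdot,\xx_N)\}$. To this end, decompose an arbitrary $s\in H$ orthogonally as $s=s_0+s_\perp$ with $s_0\in V$ and $s_\perp\in V^\bot$. The reproducing property gives $s_\perp(\xx_j)=\langle s_\perp,K(\cdot,\xx_j)\rangle_H=0$ for all $j$, so $s(X)=s_0(X)$ and the data term $(f(X)-s(X))^\transpose W(f(X)-s(X))$ depends only on $s_0$. On the other hand $\|s\|_H^2=\|s_0\|_H^2+\|s_\perp\|_H^2\ge\|s_0\|_H^2$, with equality if and only if $s_\perp=0$. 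Hence $J(s)\ge J(s_0)$, and it suffices to minimize over $V$.

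Next I would parametrize $V$ by writing $s=\aalpha^\transpose K(X,\cdot)=\sum_{j=1}^N\alpha_j K(\cdot,\xx_j)$ with $\aalpha\in\R^N$. Setting $A:=K(X,X)$, which is symmetric and (since $K$ is positive definite and the points are distinct) positive definite, the reproducing property yields $s(X)=A\aalpha$ and $\|s\|_H^2=\aalpha^\transpose A\aalpha$. Substituting, the functional becomes the finite-dimensional quadratic
\[
J(s)=(f(X)-A\aalpha)^\transpose W(f(X)-A\aalpha)+\lambda\,\aalpha^\transpose A\aalpha,
\]
whose Hessian in $\aalpha$ equals $2(AWA+\lambda A)$ and is positive definite because $A$ and $W$ are. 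Thus this quadratic is strictly convex and coercive and possesses a unique minimizer characterized by the vanishing of its gradient,
\[
A\bigl(WA\aalpha+\lambda\aalpha-Wf(X)\bigr)=0.
\]

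Since $A=K(X,X)$ is invertible, the bracketed term must vanish, giving $(WA+\lambda I)\aalpha=Wf(X)$; multiplying by $W^{-1}$ (which exists as $W$ is positive definite) turns this into $(K(X,X)+\lambda W^{-1})\aalpha=f(X)$. The coefficient matrix $K(X,X)+\lambda W^{-1}$ is a sum of two positive definite matrices, hence invertible, so $\aalpha=(K(X,X)+\lambda W^{-1})^{-1}f(X)$, as claimed. Finally, global uniqueness follows because the penalty $\lambda\|s\|_H^2$ with $\lambda>0$ makes $J$ strictly convex on all of $H$; combined with the reduction to $V$ from the first step, the stationary point found above is the unique global minimizer $s^*$.

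I expect the only genuine subtlety to be the reduction to $V$ in the presence of $W$: one has to notice that $W$ acts only on the evaluation vector $s(X)$, which is unaffected by the orthogonal component $s_\perp$, so the weighting does not interfere with the standard representer argument. Everything else, namely the invertibility of $K(X,X)$ needed to cancel $A$ and the invertibility of $K(X,X)+\lambda W^{-1}$, is immediate from positive definiteness, and the remaining quadratic computation is routine.
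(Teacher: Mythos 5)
Your proof is correct and follows essentially the same route as the paper's: reduce to the span $V_X=\spn\{K(\cdot,\xx_j)\}$ via the orthogonal decomposition and the reproducing property (noting that $W$ only sees the evaluation vector), then solve the resulting finite-dimensional quadratic by setting its gradient to zero and using positive definiteness of $K(X,X)$ and $W$ to obtain $(K(X,X)+\lambda W^{-1})\aalpha=f(X)$. The only cosmetic difference is that you cancel $K(X,X)$ and then multiply by $W^{-1}$, while the paper factors out $K(X,X)W$ directly from the gradient; your added remarks on the Hessian, coercivity, and strict convexity make the uniqueness claim slightly more explicit than the paper's.
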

\begin{proof}
  The proof is essentially the same as the proof of the unweighted
  problem. First, one shows that the minimum of $J$ has to be attained
  in $V_X=\spn\{K(\cdot,\xx) : \xx\in X\}$ by decomposing a general
  $s\in H$ into $s=s_X+s_X^\bot$ with $s_X\in V_X$ and $s_X^\bot \bot
  V_X$ and using the fact that $s_X^\bot|X =0$. Then, using the
  representation $s=K(X,\cdot)\aalpha$ with $\aalpha\in\R^N$, it is
  straight-forward to see that
  \begin{eqnarray*}
  J(s) &=& f(X)^\transpose W f(X) - 2 \aalpha^\transpose K(X,X)W f(X) +
  \aalpha^\transpose K(X,X)WK(X,X)\aalpha\\
  & & \mbox{} + \lambda \aalpha^\transpose K(X,X)\aalpha.
  \end{eqnarray*}
  whose gradient with respect to $\aalpha$ is
  \[
  \nabla_\aalpha J(s) = 2K(X,X)W\left[-f(X)+K(X,X)+\lambda W^{-1}\right]\aalpha.
  \]
  Setting this to zero and using that with $W$ and $K(X,X)$ also
  $K(X,X)+\lambda W^{-1}$ is positive definite, the statement follows.
\end{proof}

After this, we are able to show that any function $f=f_1+f_2\in
H^\sigma([\aa,\bb])$ with $f_1\in H_\Lambda^\sigma([\aa,\bb])$ and
presumed small $f_2=f-f_1$ can well be approximated with the above
operator $Q^{(\Lambda)}_{X,\lambda}$.

\begin{theorem}\label{thm:sampdecompsob}
Let $\sigma>d/2$ be given. Let $\Lambda\subseteq\cP(\mfD)$ be downward
closed. Let $f_1\in H^\sigma_\Lambda([\aa,\bb])$ be the orthogonal
projection of  $f\in H^\sigma([\aa,\bb])$ and let $f_2=f-f_1$.
Let $X=X_\Lambda^{(d)}$ and assume that the low-dimensional data sets
$\widetilde{X}_{\mfv}$ are quasi-uniform. If their fill distances
$h_\mfv=h_{\widetilde{X}_\mfv,[\aa_\mfv,\bb_\mfv]}$, $\mfv\in\Lambda$,
are sufficiently small then
\begin{eqnarray*}
  \|f-Q_{X,\lambda}^{(\Lambda)} f\|_{L_2([\aa,\bb])} & \le &
  C\left(\sum_{\mfv\in\Lambda} h_{\mfv}^{\sigma-d/2+\#\mfv/2} +
\sqrt{\lambda}\right)\|f\|_H \\
& & \mbox{} + C \left(\frac{1}{\sqrt{\lambda}} \sum_{\mfv\in\Lambda}
h_{\mfv}^{\sigma-d/2+\#\mfv/2} + 1\right)\|f_2\|_{\ell_\infty([\aa,\bb])}.
%\left(\sum_{\mfv\in\Lambda}\frac{1}{\#X_{\mfv}}
%\|f_2\|_{\ell_2(X_{\mfv})}^2\right)^{1/2}. 
\end{eqnarray*}
Thus, with $h_{X_\Lambda^{(d)}}=\max_{\mfv\in \Lambda} h_\mfv$, the choices
 \[
  \sqrt{\lambda} = \sum_{\mfv\in\Lambda}
  h_{\mfv}^{\sigma-d/2+\#\mfv/2} \quad \mbox{ or } \quad
  \sqrt{\lambda} = h_{X_\Lambda^{(d)}}^{\sigma-d/2}
  \]
  of the smoothing parameter yields the error bound
  \begin{equation}\label{funderror1}
  \|f- Q_{X,\lambda}^{(\Lambda)}f\|_{L_2([\aa,\bb])}
  \le C\left(h_{X_\Lambda^{(d)}}^{\sigma-d/2}
  \|f\|_H + \|f_2\|_{L_\infty([\aa,\bb])}\right).
  \end{equation}
\end{theorem}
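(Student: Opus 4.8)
The plan is to run the argument of Theorem~\ref{thm:genericconv} almost verbatim, but with the weighted discrete seminorm
\[
\|g\|_W^2:=\sum_{\mfv\in\Lambda}\frac{1}{\#X_\mfv}\|g\|_{\ell_2(X_\mfv)}^2 = g(X)^\transpose W g(X)
\]
playing the role of $\|\cdot\|_{\ell_2(X)}$, and with the tailored sampling inequality (\ref{sobolevbnd2}) replacing the generic one. Note that $\|f-s\|_W^2$ is precisely the data-misfit term in $J^{(\Lambda)}$. First I would split $\|f-Q_{X,\lambda}^{(\Lambda)}f\|_{L_2([\aa,\bb])}\le\|f_2\|_{L_2([\aa,\bb])}+\|f_1-Q_{X,\lambda}^{(\Lambda)}f\|_{L_2([\aa,\bb])}$ and observe that $f_1-Q_{X,\lambda}^{(\Lambda)}f\in H^\sigma_\Lambda([\aa,\bb])$, so that (\ref{sobolevbnd2}) applies to it with $A:=\sum_{\mfv\in\Lambda}h_\mfv^{\sigma-d/2+\#\mfv/2}$ in front of the $H^\sigma$-norm and $\|\cdot\|_W$ as the data term.

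Next I would establish the two a-priori estimates that are the exact analogues of Proposition~\ref{prop:bounds}. Writing $s_1:=Q_{X,\lambda}^{(\Lambda)}f$ and using the minimising property $J^{(\Lambda)}(s_1)\le J^{(\Lambda)}(f_1)=\|f_2\|_W^2+\lambda\|f_1\|_{H^\sigma([\aa,\bb])}^2$, together with $J^{(\Lambda)}(s_1)\ge\lambda\|s_1\|_{H^\sigma([\aa,\bb])}^2$ and $\|f_1\|_{H^\sigma([\aa,\bb])}\le\|f\|_H$, I obtain $\|f_1-s_1\|_{H^\sigma([\aa,\bb])}\le 2\|f\|_H+\tfrac{1}{\sqrt\lambda}\|f_2\|_W$. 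In the same way, the triangle inequality together with $\|f-s_1\|_W^2\le J^{(\Lambda)}(s_1)\le J^{(\Lambda)}(f_1)$ gives $\|f_1-s_1\|_W\le 2\|f_2\|_W+\sqrt\lambda\|f\|_H$. Feeding these two bounds into (\ref{sobolevbnd2}) and collecting terms produces an intermediate estimate of the shape $\|f-s_1\|_{L_2([\aa,\bb])}\le C(A+\sqrt\lambda)\|f\|_H+C\bigl(\tfrac{A}{\sqrt\lambda}+1\bigr)\|f_2\|_W+\|f_2\|_{L_2([\aa,\bb])}$.

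The decisive step -- and the only place where the weighting pays off -- is converting the $\|f_2\|_W$ and $\|f_2\|_{L_2([\aa,\bb])}$ terms into $\|f_2\|_{L_\infty([\aa,\bb])}$ \emph{without} picking up a factor $\sqrt N$. Here the weights $1/\#X_\mfv$ do the work: since $\|f_2\|_{\ell_2(X_\mfv)}^2\le\#X_\mfv\,\|f_2\|_{\ell_\infty(X_\mfv)}^2$, the cardinalities cancel and
\[
\|f_2\|_W^2\le\sum_{\mfv\in\Lambda}\|f_2\|_{\ell_\infty(X_\mfv)}^2\le\#\Lambda\,\|f_2\|_{\ell_\infty(X)}^2\le\#\Lambda\,\|f_2\|_{L_\infty([\aa,\bb])}^2 ,
\]
with $\#\Lambda$ fixed and independent of $N$. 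Combined with the trivial $\|f_2\|_{L_2([\aa,\bb])}\le\vol([\aa,\bb])^{1/2}\|f_2\|_{L_\infty([\aa,\bb])}$, this collapses all $f_2$-contributions into a single $\|f_2\|_{L_\infty([\aa,\bb])}$ term and yields the first displayed bound of the theorem after absorbing $\sqrt{\#\Lambda}$, $\vol([\aa,\bb])^{1/2}$ and numerical constants into $C$. This is exactly the improvement over Corollary~\ref{cor:genapproxresult}, whose $\sqrt N$ stems from the unweighted estimate $\|f_2\|_{\ell_2(X)}\le\sqrt N\|f_2\|_{\ell_\infty}$; the main (if modest) obstacle is simply to keep the bookkeeping careful enough that the weights cancel cleanly rather than being bounded crudely.

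Finally I would insert the two proposed choices of $\lambda$. For $\sqrt\lambda=A=\sum_{\mfv\in\Lambda}h_\mfv^{\sigma-d/2+\#\mfv/2}$ the prefactors $A+\sqrt\lambda$ and $A/\sqrt\lambda+1$ reduce to $2A$ and $2$; for $\sqrt\lambda=h_{X_\Lambda^{(d)}}^{\sigma-d/2}$ one uses $A\le\#\Lambda\,h_{X_\Lambda^{(d)}}^{\sigma-d/2}$ to reach the same conclusion. The remaining simplification is an elementary exponent estimate: once the fill distances are small enough that $h_\mfv\le h_{X_\Lambda^{(d)}}\le 1$, monotonicity gives $h_\mfv^{\sigma-d/2+\#\mfv/2}\le h_\mfv^{\sigma-d/2}\le h_{X_\Lambda^{(d)}}^{\sigma-d/2}$ (using $\#\mfv\ge 0$ and $\sigma-d/2>0$), whence $A\le\#\Lambda\,h_{X_\Lambda^{(d)}}^{\sigma-d/2}$. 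Absorbing $\#\Lambda$ into $C$ then yields (\ref{funderror1}).
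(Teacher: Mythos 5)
Your proposal is correct and follows essentially the same route as the paper's proof: split off $\|f_2\|_{L_2}$, apply the weighted sampling inequality (\ref{sobolevbnd2}) to $f_1-Q^{(\Lambda)}_{X,\lambda}f\in H^\sigma_\Lambda([\aa,\bb])$, derive the two a-priori bounds from the minimizing property of $J^{(\Lambda)}$ (the paper's (\ref{sobbd1}) and (\ref{sobbd2})), and exploit the cancellation of the $1/\#X_\mfv$ weights to reach $\|f_2\|_{L_\infty}$ without a $\sqrt{N}$ factor. The only cosmetic difference is that you obtain the discrete-misfit bound by a direct triangle inequality in the weighted norm rather than the paper's squared version, and your handling of the two choices of $\lambda$, including the exponent estimate $\sum_{\mfv\in\Lambda}h_\mfv^{\sigma-d/2+\#\mfv/2}\le \#\Lambda\, h_{X_\Lambda^{(d)}}^{\sigma-d/2}$ for small fill distances, matches the paper.
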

\begin{proof}
To simplify the notation we will use
$Q^{(\Lambda)}=Q^{(\Lambda)}_{X,\lambda}$ and
$J^{(\Lambda)}=J^{(\Lambda)}_{X,\lambda,f(X)}$ as before and set
$H:=H^\sigma([\aa,\bb])$. 
We start with splitting the error as before as
\begin{eqnarray}
\|f-Q^{(\Lambda)} f\|_{L_2([\aa,\bb])} &\le&
\|f_1-Q^{(\Lambda)}f\|_{L_2([\aa,\bb])} + \|f_2\|_{L_2([\aa,\bb])} \nonumber
\\
& \le &
\|f_1-Q^{(\Lambda)}f\|_{L_2([\aa,\bb])} + \prod_{j=1}^d
(b_j-a_j)^{1/2} \|f_2\|_{L_\infty([\aa,\bb])}. \label{dimconstant}
\end{eqnarray}
Next, we proceed with applying the sampling inequality (\ref{sobolevbnd2}) of
Theorem \ref{thm:samplingSobolevNew} to the error
$f_1-Q^{(\Lambda)}f$, yielding, with $h_{\mfv} = h_{\widetilde{X}_\mfv,[\aa_\mfv,\bb_\mfv]}$,
\begin{eqnarray*}
\|f_1-Q^{(\Lambda)} f\|_{L_2([\aa,\bb])} &\le& C 
\sum_{\mfv\in\Lambda}
h_\mfv^{\sigma-d/2+\#\mfv/2} \|f_1-Q^{(\Lambda)} f\|_H\\
& & \mbox{} +C
\left(\sum_{\mfv\in \Lambda} \frac{1}{\# X_\mfv}
\|f_1-Q^{(\Lambda)}f\|_{\ell_2(X_\mfv)}^2\right)^{1/2}.
\end{eqnarray*}
To further bound this, we need to bound the two expressions on the
right-hand side. This is done by 
modifying the bounds in Proposition \ref{prop:bounds}
appropriately to serve our new approximation operator $Q^{(\Lambda)}$.
First, we obviously have
\[
\lambda \|Q^{(\Lambda)} f\|_H^2 \le J^{(\Lambda)}(f_1) = \sum_{\mfv\in
  \Lambda} \frac{1}{\#X_{\mfv}} \|f_2\|_{\ell_2(X_\mfv)}^2 + \lambda
\|f\|_{H}^2,
\]
showing
\begin{equation}\label{sobbd1}
\|f_1-Q^{(\Lambda)} f\|_H \le \frac{1}{\sqrt{\lambda}}
\left(\sum_{\mfv\in\Lambda}\frac{1}{\#X_{\mfv}}
\|f_2\|_{\ell_2(X_{\mfv})}^2\right)^{1/2} + 2 \|f\|_H.
\end{equation}
Next, the other term in the bound above can further be
bounded as follows:
\begin{eqnarray*}
  \sum_{\mfv\in\Lambda} \frac{1}{\# X_\mfv}
  \|f_1-Q^{(\Lambda)}f\|_{\ell_2(X_{\mfv})}^2 &\le &
  2\sum_{\mfv\in\Lambda} \frac{1}{\#X_\mfv}
  \left(\|f-Q^{(\Lambda)}f\|_{\ell_2(X_\mfv)}^2 +
  \|f_2\|_{\ell_2(X_\mfv)}^2\right)\\
  &\le & 2 \sum_{\mfv\in \Lambda}\frac{1}{\#X_{\mfv}}
  \|f_2\|_{\ell_2(X_\mfv)}^2 + 2 J^{(\Lambda)} (f_1)\\
  &\le & 4\sum_{\mfv\in \Lambda} \frac{1}{\# X_\mfv}
  \|f_2\|_{\ell_2(X_\mfv)}^2 + 2 \lambda \|f\|_H^2
\end{eqnarray*}
showing
\begin{equation}\label{sobbd2}
\left(\sum_{\mfv \in \Lambda} \frac{1}{\#X_\mfv}
  \|f_1-Q^{(\Lambda)}f\|_{\ell_2(X_\mfv)}^2\right)^{1/2}  \le 
  \left(\sum_{\mfv\in\Lambda} \frac{1}{\#X_\mfv}
  \|f_2\|_{\ell_2(X_\mfv)}^2 + \sqrt{\lambda}\|f\|_H\right).
\end{equation}
Plugging (\ref{sobbd1}) and (\ref{sobbd2}) into the above bound
finally yields
\begin{eqnarray*}
\|f_1-Q^{(\Lambda)} f\|_{L_2([\aa,\bb])} &\le& C 
\sum_{\mfv\in\Lambda}
h_\mfv^{\sigma-d/2+\#\mfv/2}\left(\frac{1}{\sqrt{\lambda}}
\left(\sum_{\mfv\in\Lambda}\frac{1}{\#X_{\mfv}}
\|f_2\|_{\ell_2(X_{\mfv})}^2\right)^{1/2} + 2 \|f\|_H\right)\\
& & \mbox{} +C
\left(\sum_{\mfv\in\Lambda} \frac{1}{\#X_\mfv}
\|f_2\|_{\ell_2(X_\mfv)}^2 + \sqrt{\lambda}\|f\|_H\right)\\
& = & C\left(\sum_{\mfv\in\Lambda} h_{\mfv}^{\sigma-d/2+\#\mfv/2} +
\sqrt{\lambda}\right)\|f\|_H \\
& & \mbox{} + C \left(\frac{1}{\sqrt{\lambda}} \sum_{\mfv\in\Lambda}
  h_{\mfv}^{\sigma-d/2+\#\mfv/2} + 1\right)\left(\sum_{\mfv\in\Lambda}\frac{1}{\#X_{\mfv}}
\|f_2\|_{\ell_2(X_{\mfv})}^2\right)^{1/2}.
\end{eqnarray*}
Noting
\[
\sum_{\mfv\in \Lambda} \frac{1}{\#X_{\mfv}}
\|f_2\|_{\ell_2(X_\mfv)}^2 \le \sum_{\mfv\in \Lambda}
\frac{1}{\#X_\mfv} \# X_\mfv \|f_2\|_{\ell_\infty(X_\mfv)}^2 \le
\#\Lambda \|f_2\|_{\ell_\infty([\aa,\bb])}^2,
\]
the first stated bound follows immediately. Using also $\sum_{\mfv\in\Lambda}
h_\mfv^{\sigma-d/2+\#\mfv/2} \le C h_{X_\Lambda^{(d)}}^{\sigma-d/2}$, implies the second stated
bound. 
\end{proof}

\begin{remark}
  The final constant in the error bound (\ref{funderror1}) depends on the space dimension
  $d$ in two ways. On the one hand, we have by (\ref{dimconstant}) the
  factor $\prod_{j=1}^d (b_j-a_j)$ which might grow exponentially or
  decrease exponentially. In the most relevant cases where
  $b_j-a_j=1$, it is just one. On the other hand, the remaining generic
  constants mainly depend on the number of elements $\#\Lambda$ in our
  downward closed set. Typically, $\Lambda=\{\mfu\subseteq\mfD : \#\le
  n\}$ with $n$ much smaller than $d$, meaning that $f_1$ contains
  only terms with at most $n$ 
  variables. The cardinality of this set is $\binom{d}{n}$ and
  hence grows at most polynomially in $d$.
  \end{remark}

Finally, the error bound (\ref{funderror1}) and the fact that it only
holds for data sets $X_\Lambda^{(d)}$ requires us to modify
the general approximation approach outlined after Corollary
\ref{cor:genapproxresult} as follows. To approximate $f\in
H^\sigma([\aa,\bb])$ we now proceed as follows. 
\begin{enumerate}
\item Choose $\Lambda\subseteq\cP(\mfD)$ such that $f_1\in
  H_\Lambda^\sigma([\aa,\bb])$ satisfies
  $C \|f-f_1\|_{L_\infty([\aa,\bb])} <\epsilon/2$,
\item Choose the sampling points $X_\Lambda^{(d)}$ such that
  $Ch_{X_\Lambda^{(d)}}^{\sigma-d/2}<\epsilon/2$.
\end{enumerate}
Meaning that we reverse the order of the two approximation
procedures. 

\subsection{Application in Mixed Regularity Sobolev Spaces}

The paper \cite{Rieger-Wendland-24-1} also studies low-term
approximations to mixed regularity Sobolev spaces. The results there
are based on sampling inequalities from \cite{Rieger-Wendland-17-1}.

As usual, The space
  $H_\mix^m(\Omega)$ consists of all function $f\in L_2(\Omega)$
  having weak derivatives $D^\aalpha f\in L_2(\Omega)$ for all
  $\aalpha\in\N_0^d$ with $\|\aalpha\|_\infty\le m$, i.e.  with $\alpha_j\le
  m$, $1\le j\le d$. The space   is equipped with the norm
  \[
  \|f\|_{H_\mix^m(\Omega)}:=\left(\sum_{\|\aalpha\|_\infty\le m}\|D^\aalpha
    f\|_{L_2(\Omega)}^2\right)^{1/2}.
    \]
Again, this definition can be extended to define fractional order
mixed regularity Sobolev spaces $H^{\sigma}_\mix(\Omega)$ if $\Omega$
has a Lipschitz boundary or $\Omega=\R^d$. Again, it is
well-known that these spaces are reproducing kernel Hilbert spaces
whenever $\sigma>1/2$ and that a kernel can be constructed by
restricting a reproducing kernel of 
$H^\sigma_\mix(\R^d)$ to $\Omega$ if an equivalent norm is used.

In this subsection, we will restrict ourselves to $\Omega=[-1,1]^d$ and
hence write $[-1,1]^\mfu$ for $\Omega_\mfu$, $\mfu\subseteq\mfD$. The
following result on $\Lambda$-subspaces can be found in
\cite{Rieger-Wendland-24-1}.

\begin{lemma} Let $\Lambda\subseteq\cP(\mfD)$ be downward closed. Let
  $\sigma>1/2$ Let $H^\sigma_{\mix,\Lambda}([-1,1]^d)$ be the set of
  all functions $f\in H^\sigma_\mix([-1,1]^d)$ having a
  $\Lambda$-representation (\ref{flambda}). Then,
  $H^\sigma_{\mix,\Lambda}([-1,1]^d)$ is closed sub-space of
  $H^\sigma_\mix([-1,1]^d)$.
\end{lemma}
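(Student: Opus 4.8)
The plan is to mimic the argument for the standard Sobolev lemma above, replacing the trace theory used there by its mixed-regularity analogue. Since a finite sum of functions, each depending only on the coordinates in some $\mfu\in\Lambda$, is again such a sum, and scalar multiples obviously preserve the $\Lambda$-representation, $H^\sigma_{\mix,\Lambda}([-1,1]^d)$ is a linear subspace; only closedness requires work.

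First I would reduce closedness to the continuity of certain component operators. By the uniqueness and vanishing properties of Theorem \ref{thm:decomposition}, a function $f\in H^\sigma_\mix([-1,1]^d)$ lies in $H^\sigma_{\mix,\Lambda}([-1,1]^d)$ if and only if its anchored components satisfy $f_{\mfu;\cc}=0$ for every $\mfu\in\cP(\mfD)\setminus\Lambda$; here the downward closedness of $\Lambda$ is precisely what makes the vanishing of the ``missing'' components equivalent to the existence of a $\Lambda$-representation. Thus $H^\sigma_{\mix,\Lambda}([-1,1]^d)=\bigcap_{\mfu\notin\Lambda}\ker T_\mfu$, where $T_\mfu:f\mapsto f_{\mfu;\cc}$, and closedness follows once each $T_\mfu$ is shown to be a bounded linear operator on $H^\sigma_\mix([-1,1]^d)$.

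Next I would establish boundedness of $T_\mfu$. By formula (\ref{eq:anova2}), $T_\mfu$ is a finite signed sum of the anchoring operators $R_\mfv:f\mapsto f((\cdot;\cc)_\mfv)$, which freeze the coordinates outside $\mfv$ at the anchor $\cc$ and leave the remaining ones free. It therefore suffices to show that each $R_\mfv$ maps $H^\sigma_\mix([-1,1]^d)$ boundedly into $H^\sigma_\mix([-1,1]^\mfv)$, after which the constant extension back to $[-1,1]^d$ contributes only the benign factor coming from the lengths of the frozen coordinate intervals (all equal to $2$ here), in analogy with (\ref{constantintegral}). This is the step where the mixed-regularity structure is essential: because the norm on $H^\sigma_\mix$ has a tensor-product form in the coordinate directions and $\sigma>1/2$, the one-dimensional trace given by point evaluation $H^\sigma([-1,1])\to\R$ at a fixed anchor value is bounded, and tensorizing over the frozen coordinates shows that freezing them preserves the mixed smoothness exponent $\sigma$ rather than lowering it. This stands in contrast to the standard Sobolev case, where each frozen coordinate costs half a derivative.

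Finally I would assemble the pieces: each $R_\mfv$, hence each $T_\mfu$, is bounded, so each $\ker T_\mfu$ is closed, and the finite intersection over $\mfu\notin\Lambda$ is closed as well. Equivalently, if $f_n\to f$ in $H^\sigma_\mix([-1,1]^d)$ with $f_n\in H^\sigma_{\mix,\Lambda}([-1,1]^d)$, then $(f_n)_{\mfu;\cc}=0$ for all $n$ and all $\mfu\notin\Lambda$, and continuity of $T_\mfu$ forces $f_{\mfu;\cc}=0$, whence $f\in H^\sigma_{\mix,\Lambda}([-1,1]^d)$. I expect the main obstacle to be the boundedness of the anchoring operators $R_\mfv$, i.e. making the mixed-regularity trace theorem precise; this is exactly where one invokes the tensor-product description of $H^\sigma_\mix$ together with $\sigma>1/2$, and it is the point at which the mixed case genuinely differs from the ordinary Sobolev case treated earlier.
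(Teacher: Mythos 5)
Your proposal is correct, but it cannot be compared line-by-line with a proof in this paper, because the paper offers none: the lemma is quoted verbatim from \cite{Rieger-Wendland-24-1}, so the authors simply defer to that reference. Judged on its own merits, your argument is sound and is in fact assembled from exactly the ingredients this paper imports from that same reference. The characterization $H^\sigma_{\mix,\Lambda}([-1,1]^d)=\bigcap_{\mfu\notin\Lambda}\ker T_\mfu$ with $T_\mfu f=f_{\mfu;\cc}$ is justified correctly: the ``only if'' direction is the second bullet of Theorem \ref{thm:decomposition} applied to the full decomposition obtained by padding a $\Lambda$-representation with zeros, and you rightly isolate downward closedness as the hypothesis making every superset of a set outside $\Lambda$ again lie outside $\Lambda$; the ``if'' direction is just the anchored decomposition itself. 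The boundedness of the anchoring operators $R_\mfv:f\mapsto f((\cdot;\cc)_\mfv)$ on mixed-regularity spaces, which you correctly flag as the crux, is precisely the fact the paper invokes elsewhere as (\ref{equivalence1}), citing \cite[Theorem 3.10]{Rieger-Wendland-24-1} (stated there for $H^1_\mix$; the argument for general $\sigma>1/2$ is the tensorized one-dimensional trace bound you describe). The only step left informal in your sketch is the identification of $H^\sigma_\mix([-1,1]^d)$ with the $d$-fold tensor product of $H^\sigma([-1,1])$ up to norm equivalence for fractional $\sigma$, which is standard but would need to be cited or proved in a complete write-up; once that is granted, kernels of bounded operators are closed, the intersection over the finitely many $\mfu\notin\Lambda$ is closed, and the lemma follows. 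Your closing observation---that freezing coordinates preserves the smoothness exponent in the mixed-regularity scale but costs $(d-\#\mfv)/2$ derivatives in the isotropic scale---is also consistent with how the paper treats the two cases in the proof of Theorem \ref{thm:samplingSobolevNew}.
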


To introduce the required sampling inequality, we will use
{\em sparse grids} based on Clenshaw-Curtis points.  Let 
$n_1=1$ and $n_j=2^{j-1}+1$  for $j\ge 2$. Let $\Lambda\subseteq
\cP(\mfD)$ be downward closed. 
Recall that the {\em Clenshaw-Curtis points}  are the extremal points of the Chebyshev
polynomials given by $Y_{1}= \{0\}$ and 
\[
Y_j = \left\{x_i^{(j)}=-\cos\left(\pi\frac{i-1}{n_{j}-1}\right)
:  1\le i\le n_{j}\right\}, \qquad j>1.
\]
\begin{definition}
For $\emptyset\ne\mfu\subseteq\mfD$ and $q\in\N$ with $q\ge n:=\#\mfu$, the {\em
  sparse grid} $\widetilde{X}_{\mfu,q}\subseteq [-1,1]^\mfu$ based on
the points $\{Y_j\}$ is defined as
\[
\widetilde{X}_{\mfu,q} = \bigcup_{\substack{\ii\in\N^n\\|\ii|=q}}
Y_{\mfu_{i_1}}\times \cdots \times Y_{\mfu_{i_n}}.
\]
In the case of $\mfu=\emptyset$ we set $\widetilde{X}_{\emptyset,q} =
Y_q$. The points of these low-dimensional sets are again extended using the anchor
  $\cc\in[-1,1]^d$, yielding again a point sets $X_{\mfu,q}\subseteq[-1,1]^d$.

For $\qq=\{q_\mfu : \mfu\in \Lambda\}$ with $q_\mfu \ge \#\mfu$,
  a {\em sampling point set   for mixed regularity Sobolev 
  $\Lambda$-functions} is given by
  \begin{equation}\label{mixedpoints}
X_{\Lambda,\qq}^{(d)}=\bigcup_{\mfu\in\Lambda} X_{\mfu,q_\mfu}.
\end{equation}
\end{definition}

In the case of sparse grids, the mesh norm and mesh ratio do not make
sense, as the sparse grids are deliberately sparse in certain
directions. Consequently, sampling inequalities are in terms of the
number of points $N$ of the sparse grid or expressed using the
parameter $\qq$ that defines the sparse grid.

The next result is the required sampling inequality for functions
from $H^\sigma_{\mix,\Lambda}([-1,1]^d)$. It is  \cite[Theorem
  5.6]{Rieger-Wendland-24-1} and it is formulated only for {\em order-$n$-functions}.

\begin{theorem}\label{thm:samplingLambdaSobmixed}
Let  $\sigma>1/2$. Let $1\le n\le d/2$ and $\Lambda=\{\mfu\subseteq
\mfD: \#\mfu \le n\}$. Assume there is a 
$q\in\N$ such that the elements of  $\qq=\{q_\mfu :
\mfu\in\Lambda\}$ have the form $q_\mfu = \#\mfu+q$. 
Let $X=X_{\Lambda,\qq}^{(d)}\subseteq[-1,1]^d$ be the sampling point set
in (\ref{mixedpoints}). Then, there is a constant  $C=C_{\sigma,n}>0$, such
that, with $\tau=\sigma-1/2$,
\[
\|f\|_{L_\infty([-1,1]^d)} \le  C d^n 
\left[d^{\tau n}(\log N)^{\rho_1(\sigma,n)} N^{-\tau}\|f\|_{H^\sigma_\mix([-1,1]^d)} + 
    (\log N)^{\rho_2(n)} \|f\|_{\ell_\infty(X_{\Lambda,\qq}^{(d)})}\right],
\]
for all $f\in H_{\mix,\Lambda}^\sigma([-1,1]^d)$,
where  $\rho_1(\sigma,d) = (\sigma+5/2)(d-1)+1$ and
$\rho_2(d)=2d-1$.
\end{theorem}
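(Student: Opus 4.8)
The plan is to mirror the proof of Theorem~\ref{thm:samplingSobolevNew}, replacing the low-dimensional fill-distance sampling inequality of Lemma~\ref{lem:sampling} by a low-dimensional \emph{sparse-grid} sampling inequality for mixed-regularity Sobolev functions (the one-block estimate underlying \cite{Rieger-Wendland-17-1}). First I would invoke the anchored decomposition of Theorem~\ref{thm:decomposition}: since $\Lambda$ is downward closed and $f\in H^\sigma_{\mix,\Lambda}([-1,1]^d)$, we may write $f=\sum_{\mfu\in\Lambda} f_{\mfu;\cc}$, where each component $f_{\mfu;\cc}$ depends only on the variables indexed by $\mfu$ and is given by the inclusion--exclusion formula (\ref{eq:anova2}). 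The two structural facts I need, both mixed-regularity analogues of what is used in the proof of Theorem~\ref{thm:samplingSobolevNew}, are: (i) a norm-transfer bound $\|f_{\mfu;\cc}\|_{H^\sigma_\mix([-1,1]^\mfu)}\le C_\mfu\|f\|_{H^\sigma_\mix([-1,1]^d)}$ with a constant $C_\mfu$ that can be taken uniform over $\mfu\in\Lambda$; and (ii) the identity of sample values $\|f_{\mfu;\cc}\|_{\ell_\infty(\widetilde{X}_{\mfu,q_\mfu})}=\|f\|_{\ell_\infty(X_{\mfu,q_\mfu})}\le\|f\|_{\ell_\infty(X)}$, coming directly from the way $X_{\mfu,q_\mfu}$ is built by anchoring $\widetilde{X}_{\mfu,q_\mfu}$.

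Next I would apply, for each $\mfu\in\Lambda$ of size $k=\#\mfu$, the low-dimensional sparse-grid sampling inequality to the $k$-variate function $f_{\mfu;\cc}$ on $[-1,1]^k$ sampled at $\widetilde{X}_{\mfu,q_\mfu}$ with $N_\mfu$ points. This yields a bound of the form
\[
\|f_{\mfu;\cc}\|_{L_\infty([-1,1]^k)}\le C\Big[(\log N_\mfu)^{(\sigma+5/2)(k-1)+1}\,N_\mfu^{-\tau}\,\|f_{\mfu;\cc}\|_{H^\sigma_\mix}+(\log N_\mfu)^{2k-1}\,\|f_{\mfu;\cc}\|_{\ell_\infty(\widetilde{X}_{\mfu,q_\mfu})}\Big],
\]
where $\tau=\sigma-1/2$ is the mixed-regularity sparse-grid rate and the log exponents are the dimension-dependent ones recorded as $\rho_1(\sigma,k)$ and $\rho_2(k)$. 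This one-block estimate is the genuine analytic input; everything else is bookkeeping. Since $f=\sum_{\mfu} f_{\mfu;\cc}$ and each summand attains its $L_\infty([-1,1]^d)$-norm as its $L_\infty([-1,1]^\mfu)$-norm (the component is constant in the complementary variables, exactly as in (\ref{constantintegral})), the triangle inequality gives $\|f\|_{L_\infty([-1,1]^d)}\le\sum_{\mfu\in\Lambda}\|f_{\mfu;\cc}\|_{L_\infty([-1,1]^k)}$.

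It then remains to collapse the sum over $\mfu$ into the stated dimension and log factors. I would bound the number of terms by $\#\Lambda=\sum_{k=0}^n\binom{d}{k}\le Cd^n$, producing the leading $d^n$. For the rate, I would use the sparse-grid cardinality estimate $N_\mfu\sim 2^q q^{\,\#\mfu-1}$: the total $N=\sum_\mfu N_\mfu$ is dominated by the $\binom{d}{n}\sim d^n$ blocks of maximal size, so $N\lesssim d^n 2^q q^{\,n-1}$ and, since $q\sim\log N$, each block obeys $N_\mfu^{-\tau}\le Cd^{\tau n}(\log N)^{\tau(n-1)}N^{-\tau}$. The surplus $(\log N)^{\tau(n-1)}$ is absorbed into the log exponent, and combined with the one-block exponent it yields $\rho_1(\sigma,n)=(\sigma+5/2)(n-1)+1$; likewise $\log N_\mfu\le\log N$ together with monotonicity of the exponents in $\#\mfu$ gives the uniform exponent $\rho_2(n)=2n-1$ on the data term. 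Combining these with the uniform norm-transfer constant and fact~(ii) produces exactly the claimed bound.

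I expect the main obstacle to be the sharp dimension tracking in the last step: establishing that the norm-transfer constant $C_\mfu$ is genuinely bounded independently of $\mfu$ and $d$ (so as not to spoil the polynomial-in-$d$ prefactor), and pinning down the sparse-grid cardinality relation precisely enough to justify the clean $d^{\tau n}N^{-\tau}$ scaling while relegating the residual powers of $q\sim\log N$ into $\rho_1$. The underlying low-dimensional sparse-grid sampling inequality, with its specific log exponents, is taken here as the analytic black box; deriving those exponents from Smolyak interpolation error estimates in $H^\sigma_\mix$ would be the technically heaviest part if one were to prove it from scratch rather than quote \cite[Theorem~5.6]{Rieger-Wendland-24-1}.
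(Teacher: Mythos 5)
First, note what you are being compared against: the paper does not prove Theorem \ref{thm:samplingLambdaSobmixed} at all --- it is quoted verbatim from \cite[Theorem 5.6]{Rieger-Wendland-24-1}. So your reconstruction can only be judged against the paper's proof of the Sobolev analogue, Theorem \ref{thm:samplingSobolevNew}, which you explicitly model it on, and judged that way it contains one genuine error. Your structural fact (ii), the sample-value identity $\|f_{\mfu;\cc}\|_{\ell_\infty(\widetilde{X}_{\mfu,q_\mfu})}=\|f\|_{\ell_\infty(X_{\mfu,q_\mfu})}$, is false: $f_{\mfu;\cc}$ is the anchored \emph{component}, defined by the inclusion--exclusion formula (\ref{eq:anova2}), so its value at a grid point $\widetilde{\xx}$ is the alternating sum $\sum_{\mfv\subseteq\mfu}(-1)^{\#\mfu-\#\mfv}f((\widetilde{\xx};\cc)_\mfv)$, not $f((\widetilde{\xx};\cc)_\mfu)$. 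The identity you want holds for the anchored \emph{restriction} $f((\cdot;\cc)_\mfu)$, and this is precisely the distinction the paper's proof of Theorem \ref{thm:samplingSobolevNew} is built around: there the one-block sampling inequality is applied to the restrictions $f((\cdot;\cc)_\mfv)$ appearing in the double-sum representation (\ref{decomp1}), never to the components $f_{\mfv;\cc}$, because only the restrictions are directly measurable from data of $f$. You have conflated the two objects. The repair is either to follow (\ref{decomp1}) and apply your sparse-grid one-block estimate to each restriction $f((\cdot;\cc)_\mfv)$ with $\mfv\subseteq\mfu\in\Lambda$ (the extra inclusion--exclusion sum costs only a factor bounded by $2^n$), or, if you insist on working with components, to bound $\|f_{\mfu;\cc}\|_{\ell_\infty(\widetilde{X}_{\mfu,q_\mfu})}$ by values of $f$ at \emph{projections} of sparse-grid points onto anchored faces --- which then obliges you to prove that such projected points again lie in $X_{\Lambda,\qq}^{(d)}$ (true for the nested Clenshaw--Curtis grids with anchor $\cc=\00$, since $0\in Y_j$ for all $j$ and level-sums only decrease under projection, but this needs an argument you have not given).

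Second, your exponent bookkeeping does not close. You state the one-block estimate with log-exponent $\rho_1(\sigma,k)=(\sigma+5/2)(k-1)+1$ in terms of $N_\mfu$, then convert $N_\mfu^{-\tau}$ into $d^{\tau n}(\log N)^{\tau(n-1)}N^{-\tau}$, and claim the surplus $(\log N)^{\tau(n-1)}$ is ``absorbed'' so that the final exponent is again $\rho_1(\sigma,n)$. It cannot be: $\rho_1(\sigma,n)+\tau(n-1)>\rho_1(\sigma,n)$. Either the one-block inequality carries a strictly smaller exponent, so that the conversion surplus brings it up to exactly $(\sigma+5/2)(n-1)+1$ (note $(\sigma+5/2)-(\sigma-1/2)=3$), or the final exponent is larger than the theorem asserts. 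As written, the two halves of your argument are mutually inconsistent --- and this is exactly the ``sharp dimension and log tracking'' you yourself flagged as the main obstacle, but then waved through rather than resolved.
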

Using this sampling inequality together with the generic error bound
from Corollary \ref{cor:genapproxresult}, i.e. with $F_1(N)=(\log
N)^{\rho_1(\sigma,n)} N^{-\sigma+1/2}$ and $F_2(N)=(\log
N)^{\rho_2(n)}$ immediately yields the following result.

\begin{corollary}\label{cor:samplingLambdaSobmixed}
Let the assumptions of Theorem \ref{thm:samplingLambdaSobmixed}
hold. For a function $f\in H:=H^\sigma_\mix([-1,1]^d)$ let $f_1\in
H_1:=H_{\mix,\Lambda}^\sigma([-1,1]^d)$ be the orthogonal projection and
$f_2=f-f_1$. If $Q_{X,\lambda} f$ denotes the penalized least-squares
approximation to $f$ from $H_1$ with the specifically chosen parameter $\sqrt{\lambda}=(\log
N)^{\rho_1(\sigma,n)-\rho_2(n)} N^{-\sigma+1/2}$ based on the grid
$X=X_{\Lambda,\qq}^{(d)}$ then
  \[
  \|f-Q_{X,\lambda} f\|_{L_\infty([-1,1]^d)} \le \widetilde{C}\left[
    (\log N)^{\rho_1} N^{-\sigma+1/2} \|f\|_H + \sqrt{N}(\log
    N)^{\rho_2}\|f_2\|_{L_\infty([-1,1]^d)}\right],
  \]
  where $\rho_1=\rho_1(n,\sigma)$ and $\rho_2=\rho_2(n)$ are defined in
  Theorem \ref{thm:samplingLambdaSobmixed}.
\end{corollary}

\section{Weighted Norms and Their Connection to the Efficient
  Dimension}\label{sec:owen}

The goal of the last section was to to study functions $f$ from 
$H^\sigma([\aa,\bb])$ or $H^\sigma_\mix([\aa,\bb])$ which we have split
in  the form $f=f_1+f_2$,  where $f_1$ is the orthogonal projection
to $H^\sigma_\Lambda([\aa,\bb])$ and
$H^\sigma_{\mix,\Lambda}([\aa,\bb])$, respectively. We will now study
decompositions of the form
\[
f = \sum_{\mfu\in\Lambda} f_{\mfu;\cc} + \sum_{\mfu\in\complement
  \Lambda} f_{\mfu;\cc}=:f_\Lambda+f_{\complement \Lambda}
\]
again with a downward closed set $\Lambda\subseteq\cP(\mfD)$ and its
complement $\complement\Lambda:=\cP(\mfD)\setminus\Lambda$, 
under the assumption that
$\|f_{\complement\Lambda}\|_{L_\infty([\aa,\bb])}$ is small. To this end,
will derive methods of bounding the components
$f_{\mfu;\cc}$ in certain weighted norms, allowing us to conclude
quantitative statements about the size of $\|f_{\complement\Lambda}\|_{L_2([\aa,\bb])}$.

For any multi-index $\aalpha\in\N_0^d$, we have $\|\aalpha\|_\infty
\le \|\aalpha\|_1 \le d \|\aalpha\|_\infty$, which immediately leads
to the inclusions
\[
H^\sigma_\mix([\aa,\bb]) \subseteq H^\sigma([\aa,\bb]) \subseteq
H^{\sigma/d}_\mix([\aa,\bb]) \subseteq H^1_\mix([\aa,\bb])\subseteq C([\aa,\bb]),
\]
where the penultimate inequality obviously only holds if $\sigma>d$. Of course, in
the case of standard Sobolev spaces, there is a loss in smoothness if
some of the variables are fixed. However, for the applications we have
in mind, the functions are actually very smooth such that there is no
real restriction coming from smoothness. Moreover, it is known that
for ANOVA decompositions, the components are often smoother than the
original function, see \cite{Griebel-etal-12-1} and, as mentioned
in the introduction, there is an intrinsic relation between the ANOVA and the
anchored components. In any case, we can restrict ourselves to the
space %$H^1_\mix([\aa,\bb])$.
\[
H^1_{\mix} ([\aa,\bb]):= \{f\in L_2([\aa,\bb]) : D^\aalpha f\in
L_2([\aa,\bb]) \mbox{ for all } \aalpha\in\N_0^d  \mbox{ with }
\|\aalpha\|_\infty\le 1\},
\]
which is equipped with the inner product
\begin{equation}\label{hmixinner}
\langle
f,g\rangle_{H^1_{\mix}([\aa,\bb])}:=\sum_{\|\aalpha\|_{\infty}\le
  1} \langle D^\aalpha f,D^\aalpha g \rangle_{L_2([\aa,\bb])} 
\end{equation}
This  space is indeed a Hilbert space of functions,   isomorphic to the tensor
product $\bigotimes_{j=1}^{d} H^{1}([a_j,b_j])$. 
It is important to see that we can write the norm on this space also
in the following form. For any subset $\mfu\subseteq
\mfD=\{1,\ldots,d\}$ with $\#\mfu\le d$ elements, we set
\[
D^\mfu f:= \frac{\partial^{\# \mfu} f}{\prod_{j\in\mfu}\partial_j
}=D^\aalpha f,
\]
where $\aalpha\in\N_0^d$ is the multi-index with entries $\alpha_j=1$
for $j\in\mfu$ and $\alpha_j=0$ for $j\in\mfD\setminus\mfu$.

Since there is an obvious bijective relation between multi-indices
$\aalpha\in\N_0^d$ with $\|\aalpha\|_\infty\le 1$ and the subsets
$\mfu$ of $\mfD$, we thus have
\[
\|f\|_{H^1_\mix([\aa,\bb])}^2 = \sum_{\|\aalpha\|_\infty\le 1}
\|D^{\aalpha} f\|_{L_2([\aa,\bb])}^2 = \sum_{\mfu\subseteq \mfD}
\|D^\mfu f\|_{L_2([\aa,\bb])}^2.
\]
We will soon introduce another weighted but equivalent norm but first need
the following auxiliary result, which is an adaption of the classical Poincare
inequality and is in the same spirit as \cite[Lemma 4.2]{Owen-19-1}
for the ANOVA decomposition. 

\begin{lemma}\label{lem:poincare}
  Let $\mfv \subsetneq \mfu \subseteq \mfD$ and let $\sb\in \R^{\#\mfu}$ be given. 
  Assume $g\in H^1_{\mix}([\aa_{\mfu},\bb_{\mfu} ])$ satisfies $g(\xx)=0$ 
  whenever there is an index $\ell \in\mfu \setminus \mfv$ such that
  $x_\ell=s_\ell$. Then, 
\begin{equation}\label{poincare}
\|D^{\mfv}g\|_{L_{2}([\aa_{\mfu}, \bb_{\mfu} ])} \le C_{\mfv,\mfu}(\aa,\bb)
    \left\|  D^{ \mfu}  g \right\|_{L_{2}([\aa_{\mfu}, \bb_{\mfu} ])},
 \end{equation}
 where the constant $C_{\mfv,\mfu}(\aa,\bb)$ is given by
 \[
   C_{\mfv,\mfu}(\aa,\bb) = \prod_{j\in
     \mfu}(b_j-a_j)^{1/2}\prod_{j\in\mfu\setminus\mfv} (b_j-a_j)^{1/2}
%   (\bb_{\mfu}-\aa_{\mfu})^{\frac{1}{2}\11} (\bb_{\mfu \setminus \mfv}-
   %   \aa_{\mfu \setminus \mfv})^{\frac{1}{2}\11}
   \le \prod_{j\in \mfu} (b_j-a_j).
%   \le (\bb_{\mfu}-\aa_{\mfu})^1.
   \]
  Obviously, the inequality (\ref{poincare}) also holds for
  $\mfv=\mfu$ for any $g\in H^1_\mix([\aa,\bb])$ with
  $C_{\mfv,\mfu}=1$.
\end{lemma}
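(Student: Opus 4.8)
The plan is to reduce this multivariate estimate to an iterated one-dimensional Poincaré argument along the directions in $\mfw:=\mfu\setminus\mfv$, which is nonempty since $\mfv\subsetneq\mfu$. Writing $h:=D^\mfv g$, the quantity to be controlled is $\|h\|_{L_2([\aa_\mfu,\bb_\mfu])}$, while the right-hand side is $\|D^\mfu g\|_{L_2([\aa_\mfu,\bb_\mfu])}=\|D^\mfw h\|_{L_2([\aa_\mfu,\bb_\mfu])}$, using $D^\mfu=D^\mfw D^\mfv$, which holds because $\mfv$ and $\mfw$ are disjoint with $\mfv\cup\mfw=\mfu$ and mixed partials commute.

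The first and, in my view, decisive step is to transfer the vanishing hypothesis from $g$ to $h=D^\mfv g$. By assumption $g$ vanishes identically on each face $\{x_\ell=s_\ell\}$ with $\ell\in\mfw$. Since every index in $\mfv$ differs from such an $\ell$, differentiation in the $\mfv$-directions is tangential to that face and therefore preserves the vanishing, so that $h(\xx)=0$ whenever $x_\ell=s_\ell$ for some $\ell\in\mfw$. Making this rigorous in the $H^1_\mix$-setting, namely that the relevant traces on the lower-dimensional faces are well defined and that a function vanishing on a face still vanishes there after tangential differentiation, is the main technical obstacle; it rests on the tensor-product structure $H^1_\mix([\aa_\mfu,\bb_\mfu])\cong\bigotimes_{j\in\mfu}H^1([a_j,b_j])$ and on the absolute continuity of $H^1$-functions along lines.

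With the vanishing of $h$ established, I would enumerate $\mfw=\{\ell_1,\ldots,\ell_m\}$ and apply the fundamental theorem of calculus successively in each direction $\ell_i$, integrating from the anchor coordinate $s_{\ell_i}$ to $x_{\ell_i}$; at every stage the vanishing at $x_{\ell_i}=s_{\ell_i}$ needed to start the integration is again guaranteed because differentiation in the remaining $\mfw$-directions is tangential to that face. This yields the integral representation
\[
h(\xx)=\int_{s_{\ell_1}}^{x_{\ell_1}}\!\!\cdots\int_{s_{\ell_m}}^{x_{\ell_m}}
D^\mfu g\,(\xx_\mfv;\tt)\,\diff t_{\ell_1}\cdots\diff t_{\ell_m},
\]
where $\tt$ collects the $\mfw$-coordinates and $\xx_\mfv$ the remaining ones.

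Finally I would bound $|h|$ pointwise by enlarging each integration interval to all of $[a_{\ell_i},b_{\ell_i}]$ and applying the Cauchy--Schwarz inequality over the $m$-dimensional box $[\aa_\mfw,\bb_\mfw]$, then square, integrate over $\xx\in[\aa_\mfu,\bb_\mfu]$, and invoke Fubini. Collecting the resulting volume factors produces a constant of the claimed form, and the crude estimate $C_{\mfv,\mfu}(\aa,\bb)\le\prod_{j\in\mfu}(b_j-a_j)$ then follows at once; this bookkeeping of the volume factors is entirely routine and is the only place where the precise value of the constant enters. The degenerate case $\mfv=\mfu$ is trivial with constant $1$, since then $D^\mfv g=D^\mfu g$ and no integration is required.
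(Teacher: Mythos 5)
Your proposal is correct and follows essentially the same route as the paper's proof: both establish that differentiation tangential to a face $\{x_\ell=s_\ell\}$ preserves the vanishing there, then iterate the fundamental theorem of calculus in the directions of $\mfu\setminus\mfv$ to represent $D^{\mfv}g$ as an integral of $D^{\mfu}g$ over $[\sb_{\mfu\setminus\mfv},\xx_{\mfu\setminus\mfv}]$, and finish with Cauchy--Schwarz after enlarging the integration box. The only cosmetic difference is that you differentiate first (working with $h=D^{\mfv}g$) while the paper runs the integration argument on $g$ and restricts it to the $\mfu\setminus\mfv$ variables, and the paper handles the regularity issue you flag by assuming $g\in C^\infty$ and invoking a standard density argument.
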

\begin{proof}
  Let $k:=|\mfu|$ and recast everything as $k$-dimensional problem.
  The case $\mfv=\mfu$ is trivial. For the general case
  $\mfv\subsetneq \mfu$ and $\ell\in\mfu\setminus \mfv$ we will
assume $g \in C^{\infty}( [\aa_{\mfu},
    \bb_{\mfu}])$. The general case then follows by a standard density argument.
Before we start our argument we first note that for 
For  $1\le j \le k$ with $j\neq \ell$ we have
\begin{align*}
  &\frac{\partial }{\partial y_{j}} g(y_1, \dots y_{\ell-1}, s_{\ell}
  , y_{\ell+1},\dots, y_k)\\ 
  &= \lim_{h\to 0} \frac{g((y_1, \dots y_{\ell-1}, s_{\ell} ,
    y_{\ell+1},\dots, y_k)+h \ee_j ) - g(y_1, \dots y_{\ell-1},
    s_{\ell} , y_{\ell+1},\dots, y_k)}{h}  
  = 0.
\end{align*}
Repeating this argument yields for any set $\mfw\subsetneq
\{1,\ldots,k\}$ and $\ell\not\in\mfw$, 
\begin{equation}\label{an2}
D^{\mfw}g(y_1, \dots y_{\ell-1}, s_{\ell} , y_{\ell+1},\dots, y_k) = 0.
\end{equation}	
We will use this iteratively now, starting with $\ell=k$ and
$\mfw=\emptyset$, i.e. with the standard assumption $g(x_1,\ldots, x_{k-1},s_k)=0$, showing
\begin{eqnarray*}
  g(x_1,\ldots,x_k) & = & g(x_1,\ldots,x_{k-1},s_k)+
  \int_{s_k}^{x_k}\frac{\partial}{\partial x_k}
  g(x_1,\dots, x_{k-1},y_k) d y_k \\
  &= &  \int_{s_k}^{x_k}\frac{\partial}{\partial x_k}
  g(x_1,\dots, x_{k-1},y_k) d y_k, 
\end{eqnarray*}
by the fundamental theorem of calculus. Next, we repeat the argument
using the fundamental theorem of calculus again but this time also  (\ref{an2}) with
$\mfw=\{k\}$ and $\ell={k-1}$ to derive 

\begin{eqnarray*}
  \frac{\partial }{\partial x_k} g(x_1,\ldots,x_{k-1},y_k) &=&
  D^{\{k\}} g(x_1,\ldots,x_{k-2},s_{k-1},y_k)\\
  && \mbox{} +
   \int_{s_{k-1}}^{x_{k-1}} D^{\{k-1,k\}}
   g(x_1,\ldots,x_{k-2},y_{k-1},y_k) dy_{k-1}\\
   & = &    \int_{s_{k-1}}^{x_{k-1}} D^{\{k-1,k\}}
   g(x_1,\ldots,x_{k-2},y_{k-1},y_k) dy_{k-1},
\end{eqnarray*}
yielding altogether
\begin{eqnarray*}
   g(x_1,\ldots,x_k) & = &  \int_{s_k}^{x_k}\int_{s_{k-1}}^{x_{k-1}} D^{\{k-1,k\}}
   g(x_1,\ldots,x_{k-2},y_{k-1},y_k) dy_{k-1} dy_k\\
   & = & \int_{[\sb_{\{ k-1,k\}}, \xx_{\{k-1,k\}}]} D^{\{k-1,k\}} 
		 g(x_1,\dots, s_{k-2},y_{k-1},y_k)  d \yy_{\{k-1,k\}} .
\end{eqnarray*}
Iterating this process eventually shows
\begin{equation}\label{rep1}
  g(\xx) =  \int_{[\sb, \xx]} D^{\{1,\ldots,k\}} g(\yy)  d \yy
  \end{equation}
This allows us to prove the statement for the case
$\mfv=\emptyset$. Here, we have 
\begin{eqnarray*}
  \|g\|^2_{L_{2}([\aa_{\mfu}, \bb_{\mfu} ])} &=& 
  \int_{[\aa_{\mfu}, \bb_{\mfu} ]} \left|g(\xx)\right|^2 d \xx
  =\int_{[\aa_{\mfu}, \bb_{\mfu} ]} \left| \int_{[\sb, \xx]} D^{\{1,\ldots,k\}}
  g(\yy)  d \yy  \right|^2 d \xx\\
  &\le& \int_{[\aa_{\mfu}, \bb_{\mfu} ]} \left(\int_{[\sb, \xx]}\left| D^{\{1,\ldots,k\}} 
  g(\yy)  d \yy \right| \right)^2 d \xx\\
  &\le &\int_{[\aa_{\mfu}, \bb_{\mfu} ]} \left(\int_{[\aa_{\mfu}, \bb_{\mfu} ]}
  \left|  D^{\{1,\ldots,k\}} g(\yy)  d \yy \right| \right)^2 d \xx\\
  &\le& \left( \prod_{j\in \mfu} (b_j-a_j) \right)^{2}
  \left\| D^{\{1,\ldots,k\}} g \right\|^2_{L_{2}([\aa_{\mfu}, \bb_{\mfu} ])} ,
\end{eqnarray*}
where we have used the Cauchy-Schwarz inequality in the last step.

For a general $\mfv \subsetneq \mfu$, we use the above argument
leading to (\ref{rep1}) only for the variables in  $\mfu\setminus
\mfv$ instead of all the variables. Thus, instead of (\ref{rep1}) we obtain
\[
D^{\mfv}g(\xx)= \int_{[\sb_{\mfu\setminus \mfv}, 
    \xx_{\mfu\setminus \mfv} ]}  
D^{\{1,\ldots,k \}}g(y_1,\dots,y_k)  d \yy_{\mfu\setminus \mfv}
\]
and thus
\[
\|D^{\mfv}g\|^2_{L_{2}([\aa_{\mfu}, \bb_{\mfu} ])} \le \prod_{j\in \mfu} (b_j-a_j) 
\prod_{j\in \mfu\setminus \mfv} (b_j-a_j) 
\left\| D^{\{1,\ldots,k\}} g \right\|^2_{L_{2}([\aa_{\mfu}, \bb_{\mfu} ])},
\]
which finishes the proof in the general case.
\end{proof}

After this preparation, we will define the alternative inner product on
$H^1_\mix([\aa,\bb])$, depending on certain weights. The motivation
for this weighted norm comes from, for example,
\cite{Dick-etal-04-1,Sloan-Wozniakowski-98-1, Kuo-etal-12-1, Sloan-etal-04-1},
where  similar weighted norms are used though they are often based on an ANOVA
decomposition rather than an anchored decomposition.

To introduce
it, we recall  \cite[Theorem 3.10]{Rieger-Wendland-24-1} which shows that for any
function  $f\in H^1_{\mix}([\aa,\bb])$ and  any $\mfu\subseteq\mfD$, we
have  $f((\cdot;\cc)_\mfu)\in H^1_{\mix}([\aa_\mfu,\bb_\mfu])$.
Even more, the restriction is continuous, i.e. there is a constant $C(\mfu,\cc,\aa,\bb)>0$ such
that
\begin{equation}\label{equivalence1}
  \|f((\cdot;\cc)_\mfu)\|_{H^1_{\mix}([\aa_\mfu,\bb_\mfu])} \le
  C(\mfu,\aa,\bb,\cc) \|f\|_{H^1_{\mix}([\aa,\bb])}, 
\end{equation}
holds for all $ f\in H^1_{\mix}([\aa,\bb])$. Thus, the following bilinear form is well-defined.

\begin{definition} For every $\mfu\subseteq\mfD$ let $\gamma_\mfu>0$
  be given weights. Then, for $f,g\in H^1_{\mix}([\aa,\bb])$ the
    $\gamma$-weighted inner product is defined by
\begin{eqnarray*}
\left\langle f,g \right\rangle_{H^1_{\cc;\ggamma}([\aa,\bb] )}&:=&
\sum_{\mfu \subseteq \mfD } \gamma^{-1}_{\mfu}
\int_{[\aa_{\mfu},\bb_{\mfu}]} 
D^\mfu  f((\xx;\cc)_{\mfu})
D^\mfu g((\xx;\cc))_{\mfu})
d \xx_{\mfu}\\
& = & \sum_{\mfu \subseteq\mfD} \gamma_\mfu^{-1} \langle D^\mfu
f((\cdot;\cc)_\mfu), D^\mfu g((\cdot;\cc)_\mfu)\rangle_{L_2([\aa_\mfu,\bb_\mfu])}.
\end{eqnarray*}
It induces the norm
\[
\|f\|_{H^1_{\cc;\ggamma}([\aa,\bb])}^2:=\left\langle  f,f
  \right\rangle_{H^1_{\cc;\ggamma}([\aa,\bb])} =
  \sum_{\mfu\subseteq\mfD} \gamma_\mfu^{-1}\|D^\mfu f((\cdot;\cc)_\mfu)\|_{L_2([\aa_\mfu,\bb_\mfu])}^2.
\]
\end{definition}
It is our goal to show that this is indeed an inner product on
$H^1_\mix([\aa,\bb])$ and that the induced norm
is equivalent to the standard norm.
We can now show the well-definedness of the weighted inner product and
the equivalence of norms on $H^1_\mix([\aa,\bb])$. 

\begin{theorem}\label{thm:equivalence}
  The map $\langle
  \cdot,\cdot\rangle_{H^1_{\cc;\ggamma}[\aa,\bb]}\to\R$ is
  well-defined and defines an inner product on $H^1_\mix([\aa,\bb])$.
  The induced norm can alternatively be written as
  \begin{equation}\label{repalternative}
  \|f\|_{H^1_{\cc;\ggamma}([\aa,\bb])}^2 = \sum_{\mfu\subseteq\mfD}
  \gamma_{\mfu}^{-1} \|D^\mfu f_{\mfu;\cc}\|_{L_2([\aa_\mfu,\bb_\mfu])}^2.
  \end{equation}
  There are constants $c,C>0$ such that the induced norm satisfies
  \begin{equation}\label{equivalence2}
c  \|f\|_{H^1_\mix[\aa,\bb]} \le   \|f\|_{H^1_{\cc;\ggamma}([\aa,\bb])} \le C
  \|f\|_{H^1_\mix[\aa,\bb]}, \qquad f\in H^1_\mix([\aa,\bb]).
    \end{equation}
\end{theorem}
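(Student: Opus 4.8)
The plan is to proceed in three stages: first verify that the bilinear form is well-defined, bilinear and symmetric; then establish the alternative representation (\ref{repalternative}); and finally prove the norm equivalence (\ref{equivalence2}), from which positive definiteness — and hence the inner-product property — follows automatically. Well-definedness and symmetry are essentially formal. For each $\mfu\subseteq\mfD$ the restriction $f((\cdot;\cc)_\mfu)$ lies in $H^1_\mix([\aa_\mfu,\bb_\mfu])$ by the result quoted in (\ref{equivalence1}), so the top-order mixed derivative $D^\mfu f((\cdot;\cc)_\mfu)$ belongs to $L_2([\aa_\mfu,\bb_\mfu])$; each summand is then a genuine $L_2$-inner product, finite by Cauchy--Schwarz, and the sum over the finitely many subsets of $\mfD$ converges. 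Bilinearity and symmetry are inherited termwise from the $L_2$-inner products.

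For (\ref{repalternative}) I would use the explicit formula (\ref{eq:anova2}) for the anchored components. Applying $D^\mfu$ to $f_{\mfu;\cc}(\xx_\mfu)=\sum_{\mfv\subseteq\mfu}(-1)^{\#\mfu-\#\mfv}f((\xx;\cc)_\mfv)$ and observing that each summand with $\mfv\subsetneq\mfu$ is independent of at least one variable $x_j$ with $j\in\mfu\setminus\mfv$, so that $\partial_j$ (and hence $D^\mfu$) annihilates it, leaves only the $\mfv=\mfu$ term. Thus $D^\mfu f_{\mfu;\cc}=D^\mfu f((\cdot;\cc)_\mfu)$ pointwise, and (\ref{repalternative}) follows by inserting this identity termwise into the definition of the weighted norm.

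The equivalence (\ref{equivalence2}) splits into an easy upper bound and a harder lower bound. For the upper bound I would combine the identity just established with the continuity estimate (\ref{equivalence1}): since $\|D^\mfu f_{\mfu;\cc}\|_{L_2([\aa_\mfu,\bb_\mfu])}=\|D^\mfu f((\cdot;\cc)_\mfu)\|_{L_2([\aa_\mfu,\bb_\mfu])}\le\|f((\cdot;\cc)_\mfu)\|_{H^1_\mix([\aa_\mfu,\bb_\mfu])}\le C(\mfu,\aa,\bb,\cc)\|f\|_{H^1_\mix([\aa,\bb])}$, summing against the finitely many bounded weights $\gamma_\mfu^{-1}$ yields the constant $C$. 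The lower bound is the crux, and it is where Lemma \ref{lem:poincare} does the real work. I would expand the standard norm through the anchored decomposition $f=\sum_\mfw f_{\mfw;\cc}$: because $f_{\mfw;\cc}$ depends only on the variables in $\mfw$, the derivative $D^\mfu f_{\mfw;\cc}$ vanishes unless $\mfu\subseteq\mfw$, so $D^\mfu f=\sum_{\mfw\supseteq\mfu}D^\mfu f_{\mfw;\cc}$. Each $f_{\mfw;\cc}$ satisfies the annihilation property, hence vanishes whenever some coordinate equals $c_j$ with $j\in\mfw$; applying Lemma \ref{lem:poincare} to $g=f_{\mfw;\cc}$ over $[\aa_\mfw,\bb_\mfw]$ with subset $\mfu\subsetneq\mfw$ and $\sb=\cc_\mfw$ (the case $\mfu=\mfw$ being the trivial one with constant $1$) bounds $\|D^\mfu f_{\mfw;\cc}\|_{L_2([\aa_\mfw,\bb_\mfw])}$ by a fixed multiple of $\|D^\mfw f_{\mfw;\cc}\|_{L_2([\aa_\mfw,\bb_\mfw])}$. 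Passing from $[\aa_\mfw,\bb_\mfw]$ back to $[\aa,\bb]$ via the constancy factor $\prod_{j\notin\mfw}(b_j-a_j)^{1/2}$ as in (\ref{constantintegral}), then using the triangle inequality and Cauchy--Schwarz over the finite index set, produces $\sum_\mfu\|D^\mfu f\|_{L_2([\aa,\bb])}^2\le\sum_\mfw\widetilde{C}_\mfw\|D^\mfw f_{\mfw;\cc}\|_{L_2([\aa_\mfw,\bb_\mfw])}^2$; absorbing the fixed positive weights $\gamma_\mfw$ then gives the constant $c$.

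The main obstacle, and the only genuinely analytic input, is precisely this Poincaré-type control of the lower-order mixed derivatives of an annihilating component by its top-order derivative, which is exactly what Lemma \ref{lem:poincare} supplies; everything else is bookkeeping over the finite lattice of subsets of $\mfD$ together with the bounded weights. Finally, positive definiteness of the form is immediate from the lower bound: if $\|f\|_{H^1_{\cc;\ggamma}([\aa,\bb])}=0$ then $\|f\|_{H^1_\mix([\aa,\bb])}=0$, hence $f=0$, which completes the verification that the form is an inner product.
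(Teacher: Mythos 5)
Your proposal is correct and follows essentially the same route as the paper's own proof: well-definedness and the upper bound via the restriction estimate (\ref{equivalence1}), the identity $D^\mfu f_{\mfu;\cc}=D^\mfu f((\cdot;\cc)_\mfu)$ for (\ref{repalternative}) by annihilating the $\mfv\subsetneq\mfu$ terms, and the lower bound (hence definiteness) by expanding the anchored decomposition, discarding the terms with $\mfv\not\subseteq\mfu$, and applying Lemma \ref{lem:poincare} to the annihilating components with Cauchy--Schwarz and the constancy factors. No gaps.
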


\begin{proof}
Obviously, $\langle\cdot,\cdot\rangle_{H^1_{\cc;\ggamma}([\aa,\bb])}$
is bilinear and symmetric. It is well-defined as we have
 \begin{eqnarray*}
    \|f\|_{H^1_{\cc;\ggamma}([\aa,\bb])}^2 & = &
    \sum_{\mfu\subseteq\mfD}\gamma_\mfu^{-1} \|D^\mfu
    f((\cdot;\cc)_\mfu)\|_{L_2([\aa_\mfu,\bb_\mfu])}^2
     \le 
    \sum_{\mfu\subseteq\mfD}\gamma_\mfu^{-1} \|
    f((\cdot;\cc)_\mfu)\|_{H^1_{\mix}([\aa_\mfu,\bb_\mfu])}^2\\
    &\le &
    \sum_{\mfu\subseteq\mfD} \gamma_{\mfu}^{-1} C(\mfu,\aa,\bb,\cc)
    \|f\|_{H^1_{\mix}([\aa,\bb])}^2,
 \end{eqnarray*}
 using (\ref{equivalence1}),   which also shows
the upper bound of (\ref{equivalence2}). To see (\ref{repalternative}), we proceed as
 follows
  If $\mfv \subsetneq \mfu$, the derivative 
$ D^{\mfu} f((\xx;\cc)_{\mfv})$ will vanish,
as we differentiate in at least one direction, in  which  in
$f((\xx;\cc)_{\mfv})$ is constant.
Thus, from the representation (\ref{eq:anova2}) we find
  \[
 D^\mfu f_{\mfu;\cc}(\xx_\mfu) = \sum_{\mfv\subseteq\mfu}
 (-1)^{|\mfu|-|\mfv|} D^\mfu f((\xx;\cc)_\mfv) = D^\mfu
 f((\xx;\cc)_\mfu).
 \]
 This shows immediately the alternative representation given in
 (\ref{repalternative}). The latter representation can now also be
 used to show the lower bound in the norm equivalence
 (\ref{equivalence2}), which in turn also yields definiteness of the
 inner product. To show this lower bound, we first use Lemma
 \ref{lem:poincare} with $g=f_{\mfu;\cc}$. By the annihilation
 property from Theorem  \ref{thm:decomposition} we know that it
 satisfies the assumptions from Lemma \ref{lem:poincare}. Hence, for
 any $\mfv\subseteq\mfu$ we have
 \[
 \|D^\mfv f_{\mfu;\cc}\|_{L_2[\aa_\mfu,\bb_\mfu]} \le
 C_{\mfv,\mfu}(\aa,\bb) \|D^\mfu
 f_{\mfu;\cc}\|_{L_2([\aa_\mfu,\bb_\mfu])}.
 \]
 Thus,  we find
 \begin{eqnarray*}
   \|f\|_{H^1_\mix([\aa,\bb])}^2 & = & \sum_{\mfv\subseteq\mfD}\|D^\mfv
   f\|_{L_2([\aa,\bb])}^2  
    =  \sum_{\mfv\subseteq\mfD} \left\|D^\mfv \sum_{\mfu\subseteq
      \mfD} f_{\mfu;\cc}\right\|_{L_2([\aa,\bb])}^2\\
    & \le & \sum_{\mfv\subseteq\mfD}\left(\sum_{\mfu\subseteq\mfD}
    \|D^\mfv f_{\mfu;\cc}\|_{L_2([\aa,\bb])}\right)^2\\
    & \le & 2^d
    \sum_{\mfv\subseteq\mfD}\sum_{\mfu\subseteq\mfD}\|D^\mfv
    f_{\mfu;\cc}\|_{L_2([\aa,\bb])}^2\\
    & \le & 2^d \sum_{\mfv\subseteq\mfD}\sum_{\mfu\subseteq \mfD}
    \prod_{j\in \mfD\setminus\mfu} (b_j-a_j) \|D^\mfv
    f_{\mfu;\cc}\|_{L_2([\aa_\mfu,\bb_\mfu])}^2.
 \end{eqnarray*}
 Next, we observe that if $\mfv\not\subseteq\mfu$ there is an index
 $j\in\mfv$ with $j\not\in\mfu$. As $f_{\mfu;\cc}$ only depends on the
 variables with index in $\mfu$, its derivative with respect to this
 $j$ is zero, meaning $D^\mfv f_{\mfu;\cc}=0$. Thus, we can proceed

 \begin{eqnarray*}
   \|f\|_{H^1_\mix([\aa,\bb])}^2
   & \le & 2^d \sum_{\mfu\subseteq\mfD} \prod_{j\in \mfD\setminus\mfu} (b_j-a_j)
   \sum_{\mfv\subseteq \mfu} \|D^\mfv f_{\mfu;\cc}\|_{L_2([\aa_\mfu,\bb_{\mfu}])}^2\\
     &  \le& 2^d \sum_{\mfu\subseteq\mfD}
      \prod_{j\in \mfD\setminus\mfu} (b_j-a_j) \sum_{\mfv\subseteq\mfu}
      C_{\mfv,\mfu}(\aa,\bb)\|D^\mfu
      f_{\mfu;\cc}\|_{L_2([\aa_\mfu,\bb_\mfu])}^2\\
      & \le & \widetilde{C}\sum_{\mfu\subseteq\mfD} 
      \|D^\mfu 
      f_{\mfu;\cc}\|_{L_2([\aa_\mfu,\bb_\mfu])}^2\\
      &\le& \widetilde{C} \left(\max_{\mfu\subseteq\mfD}
      \gamma_\mfu\right)\sum_{\mfu\subseteq\mfD} \gamma_\mfu^{-1}
      \|D^\mfu 
      f_{\mfu;\cc}\|_{L_2([\aa_\mfu,\bb_\mfu])}^2\\
      & = & \widetilde{C} \left(\max_{\mfu\subseteq\mfD}
      \gamma_\mfu\right)\|f\|_{H^1_{\cc;\ggamma}([\aa,\bb])}^2,
 \end{eqnarray*}
 where
 \[
 \widetilde{C} = 2^d\max_{\mfu\subseteq\mfD}
 \prod_{j\in\mfD\setminus\mfu}(b_j-a_j)\sum_{\mfv\subseteq\mfu}
 C_{\mfv,\mfu}(\aa,\bb).
 \]
 and where we have used the alternative representation
 (\ref{repalternative}) in the last step.
 \end{proof}

\subsection{Bounds on the Anchored Terms}

While the norm equivalence bounds in Theorem \ref{thm:equivalence} are
extremely coarse, as they have only been derived for the sole purpose
of showing equivalence, the techniques involved allow us to derive
more meaningful bounds on the norm of the terms in the anchored
decomposition.  

The next statement is in this spirit and is also analogue to
\cite[Theorem 4.3]{Owen-19-1} for the ANOVA decomposition.

\begin{theorem}\label{thm:lowerbound}
  Let $C_{\mfv,\mfu}(\aa,\bb)$ be the constants from Lemma
  \ref{lem:poincare}. Define
 \[
\widetilde{C}_\mfu=\widetilde{C}_\mfu(\aa,\bb):=\frac{1}{\sum_{\mfv\subseteq\mfu}
  C_{\mfv,\mfu}(\aa,\bb)^2}, \qquad \mfu\subseteq\mfD.
\]
Then, for any   $f\in H^1_{\mix}([\aa,\bb]) $ the lower bound
  \[
\left\|f\right\|^2_{H^1_{\cc;\ggamma}([\aa,\bb] )} \ge
\sum_{\mfu\subseteq\mfD}\gamma_\mfu^{-1} \widetilde{C}_{\mfu}
\|f_{\mfu;\cc}\|_{H^1_{\mix}([\aa_\mfu,\bb_\mfu])}^2, 
\]
holds.
\end{theorem}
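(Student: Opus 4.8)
The plan is to reduce the claim to a term-by-term comparison, one subset $\mfu\subseteq\mfD$ at a time, and then to invert the Poincar\'e-type estimate of Lemma~\ref{lem:poincare}. First I would start from the alternative representation (\ref{repalternative}) of the weighted norm, namely $\|f\|^2_{H^1_{\cc;\ggamma}([\aa,\bb])} = \sum_{\mfu\subseteq\mfD} \gamma_\mfu^{-1}\|D^\mfu f_{\mfu;\cc}\|^2_{L_2([\aa_\mfu,\bb_\mfu])}$, which involves only the top-order derivative $D^\mfu$ of each anchored component. Since the weights $\gamma_\mfu^{-1}$ are positive and the sum runs over the same index set $\mfu\subseteq\mfD$ as on the right-hand side of the claim, it suffices to prove, for each fixed $\mfu$, the single-term inequality $\|D^\mfu f_{\mfu;\cc}\|^2_{L_2([\aa_\mfu,\bb_\mfu])} \ge \widetilde{C}_\mfu \,\|f_{\mfu;\cc}\|^2_{H^1_{\mix}([\aa_\mfu,\bb_\mfu])}$.

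For this single-term estimate I would apply Lemma~\ref{lem:poincare} with $g=f_{\mfu;\cc}$. Its hypothesis is met because, by the annihilation property recorded in Theorem~\ref{thm:decomposition}, the component $f_{\mfu;\cc}$ vanishes whenever $x_\ell=c_\ell$ for some $\ell\in\mfu$; taking the anchor $\sb=\cc_\mfu$, the assumption ``$g(\xx)=0$ whenever $x_\ell=s_\ell$ for some $\ell\in\mfu\setminus\mfv$'' then holds for every $\mfv\subsetneq\mfu$ (and trivially for $\mfv=\mfu$ with $C_{\mfv,\mfu}=1$). Thus, for each $\mfv\subseteq\mfu$, the lemma gives $\|D^\mfv f_{\mfu;\cc}\|_{L_2([\aa_\mfu,\bb_\mfu])} \le C_{\mfv,\mfu}(\aa,\bb)\,\|D^\mfu f_{\mfu;\cc}\|_{L_2([\aa_\mfu,\bb_\mfu])}$.

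The key move is then to use these inequalities in reverse. Squaring each and summing over all $\mfv\subseteq\mfu$ --- which is exactly how the mixed norm on $[\aa_\mfu,\bb_\mfu]$ decomposes --- yields $\|f_{\mfu;\cc}\|^2_{H^1_{\mix}([\aa_\mfu,\bb_\mfu])} = \sum_{\mfv\subseteq\mfu}\|D^\mfv f_{\mfu;\cc}\|^2_{L_2} \le \big(\sum_{\mfv\subseteq\mfu} C_{\mfv,\mfu}(\aa,\bb)^2\big)\,\|D^\mfu f_{\mfu;\cc}\|^2_{L_2}$. Dividing by the bracketed factor and recalling $\widetilde{C}_\mfu = 1/\sum_{\mfv\subseteq\mfu} C_{\mfv,\mfu}^2$ gives precisely the single-term inequality above. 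Multiplying by $\gamma_\mfu^{-1}$ and summing over $\mfu\subseteq\mfD$ then recovers the asserted lower bound.

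I do not expect a genuine obstacle here: the only point to watch is the \emph{direction} of the Poincar\'e estimate. Lemma~\ref{lem:poincare} bounds each lower-order derivative by the single top-order derivative $D^\mfu$, so to obtain a lower bound on the weighted norm (which sees only $D^\mfu f_{\mfu;\cc}$) in terms of the full mixed norm of the component (which sees all $D^\mfv f_{\mfu;\cc}$), one must sum the squared Poincar\'e inequalities and then invert, rather than applying the lemma directly. The remaining care is simply to confirm that the annihilation hypothesis is available across the full range $\mfv\subseteq\mfu$, which is guaranteed by Theorem~\ref{thm:decomposition}.
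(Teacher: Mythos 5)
Your proof is correct and follows essentially the same route as the paper: start from the representation (\ref{repalternative}), apply Lemma \ref{lem:poincare} to $g=f_{\mfu;\cc}$ (justified by the annihilation property), sum the squared estimates over $\mfv\subseteq\mfu$, and invert to get the term-by-term bound $\|D^\mfu f_{\mfu;\cc}\|^2_{L_2([\aa_\mfu,\bb_\mfu])}\ge \widetilde{C}_\mfu\|f_{\mfu;\cc}\|^2_{H^1_{\mix}([\aa_\mfu,\bb_\mfu])}$ before summing over $\mfu$ with the weights. In fact your version states the summed constant correctly as $\sum_{\mfv\subseteq\mfu}C_{\mfv,\mfu}(\aa,\bb)^{2}$, whereas the paper's intermediate display has a typo (exponent $-2$) that is silently corrected in its final step.
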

\begin{proof}
 
We recall the alternative representation (\ref{repalternative}) 
\begin{equation}
  \left\|f\right\|^2_{H^1_{\cc;\ggamma}([\aa,\bb] )}=
  \sum_{\mfu\subseteq\mfD}  \gamma_\mfu^{-1}\|D^\mfu
  f_{\mfu;\cc}\|_{L_2([\aa_\mfu,\bb_\mfu])}^2.\label{est1} 
\end{equation}
and that we have by Lemma
\ref{lem:poincare}, 
\[
\|D^{\mfv}f_{\mfu;\cc} \|^2_{L_{2}([\aa_{\mfu}, \bb_{\mfu} ])} \le 
C_{\mfv,\mfu}(\aa,\bb)^2
\left\| D^{\mfu} f_{\mfu;\cc} \right\|^2_{L_{2}([\aa_{\mfu}, \bb_{\mfu} ])},
\]
for $\mfv\subseteq \mfu$,  and thus 
\begin{eqnarray*}
  \|f_{\mfu;\cc}\|^2_{H^1_{\mix}([\aa_{\mfu}, \bb_{\mfu} ])} 
  & =  & \sum_{\mfv \subseteq\mfu} \|D^\mfv
  f_{\mfu;\cc}\|_{L_2([\aa_\mfu,\bb_\mfu])}^2 \\
  &\le& \left(\sum_{\mfv\subseteq\mfu}C_{\mfv,\mfu}(\aa,\bb)^{-2}\right)
\left\| D^{\mfu} f_{\mfu;\cc} \right\|^2_{L_{2}([\aa_{\mfu}, \bb_{\mfu} ])},
\end{eqnarray*}
Using this as a lower bound on $\left\| D^{\mfu} f_{\mfu;\cc}
\right\|^2_{L_{2}([\aa_{\mfu}, \bb_{\mfu} ])}$ and inserting this into
(\ref{est1}) yields

\[
\left\|f\right\|^2_{H^1_{\cc;\ggamma}([\aa,\bb] )} \ge
\sum_{\mfu\subseteq\mfD}\gamma_\mfu^{-1}
\frac{1}{\sum_{\mfv\subseteq\mfu} C_{\mfv,\mfu}(\aa,\bb)^2}
\|f_{\mfu;\cc}\|_{H^1_{\mix}([\aa_\mfu,\bb_\mfu])}^2, 
\]
which is the stated inequality.
\end{proof}

\begin{remark}
In the most popular situations of $[\aa,\bb]=[0,1]^d$ and
$[\aa,\bb]=[1/2,1/2]^d$, we have 
$C_{\mfv,\mfu}(\aa,\bb)=1$ and hence $\widetilde{C}_\mfu = 2^{-\#\mfu
}$ such that the above bound becomes
\[
\left\|f\right\|^2_{H^1_{\cc;\ggamma}([\aa,\bb] )} \ge
\sum_{\mfu\subseteq\mfD}\gamma_\mfu^{-1} 2^{-\#\mfu}
\|f_{\mfu;\cc}\|_{H^1_{\mix}([\aa_\mfu,\bb_\mfu])}^2, 
\]
\end{remark}

%\begin{remark}
%As we obviously also have $\|D^{\mfu}f_{\mfu;\cc}\|^2_{L_{2}([\aa_{\mfu}, \bb_{\mfu} ])} + 
%\|f_{\mfu;\cc}\|^2_{L_{2}([\aa_{\mfu}, \bb_{\mfu} ])} \le$
%\end{remark}

%\section{Application}

%We assume to have a function $f\in H^1_\mix([\aa,\bb])$ with controlled weighted mixed
%norm. Applications are available mostly from classical QMC-literature. 
%There is also a (complicated) examples for parametric pdes.

\begin{corollary}\label{cor:anchorbound}
The anchored components of a function $f\in H^1_\mix([\aa,\bb])$
satisfy the bound
\[
\|f_{\mfu;\cc}\|^2_{H^1_{\mix}([\aa_{\mfu}, \bb_{\mfu} ])}   \le 
C_\mfu(\aa,\bb) \gamma_\mfu  \|f\|^2_{H^1_{\cc;\ggamma}([\aa, \bb])},
\]
where
\[
C_\mfu(\aa,\bb) = \widetilde{C}_\mfu^{-1} = \sum_{\mfv\subseteq\mfu}
C_{\mfv,\mfu}(\aa,\bb)^2 \le 2^{\#\mfu} \prod_{j\in \mfu} (b_j-a_j)^2.
\]
\end{corollary}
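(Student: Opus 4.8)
The plan is to read the result off directly from the lower bound already established in Theorem \ref{thm:lowerbound}, since that theorem does essentially all of the work. Recall that it provides
\[
\|f\|^2_{H^1_{\cc;\ggamma}([\aa,\bb])} \ge \sum_{\mfw\subseteq\mfD} \gamma_\mfw^{-1}\,\widetilde{C}_\mfw\,\|f_{\mfw;\cc}\|^2_{H^1_\mix([\aa_\mfw,\bb_\mfw])},
\]
in which every summand is nonnegative, because the weights $\gamma_\mfw$ are positive, the constants $\widetilde{C}_\mfw$ are positive, and the norms are nonnegative. For the fixed set $\mfu$ of interest I would therefore simply discard all terms except the one indexed by $\mfu$, which preserves the inequality and yields
\[
\|f\|^2_{H^1_{\cc;\ggamma}([\aa,\bb])} \ge \gamma_\mfu^{-1}\,\widetilde{C}_\mfu\,\|f_{\mfu;\cc}\|^2_{H^1_\mix([\aa_\mfu,\bb_\mfu])}.
\]

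Next I would isolate the anchored component by multiplying through by the positive factor $\gamma_\mfu\,\widetilde{C}_\mfu^{-1}$, obtaining
\[
\|f_{\mfu;\cc}\|^2_{H^1_\mix([\aa_\mfu,\bb_\mfu])} \le \widetilde{C}_\mfu^{-1}\,\gamma_\mfu\,\|f\|^2_{H^1_{\cc;\ggamma}([\aa,\bb])}.
\]
This is exactly the asserted estimate once one sets $C_\mfu(\aa,\bb)=\widetilde{C}_\mfu^{-1}=\sum_{\mfv\subseteq\mfu} C_{\mfv,\mfu}(\aa,\bb)^2$, which is just the reciprocal of the quantity $\widetilde{C}_\mfu$ defined in Theorem \ref{thm:lowerbound}. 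The only points requiring any care here are keeping the direction of the inequality correct when dropping the remaining nonnegative terms and when inverting the positive factor $\gamma_\mfu^{-1}\widetilde{C}_\mfu$; neither is a genuine obstacle.

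Finally, to establish the explicit upper estimate for $C_\mfu(\aa,\bb)$, I would invoke the bound $C_{\mfv,\mfu}(\aa,\bb)\le \prod_{j\in\mfu}(b_j-a_j)$ supplied by Lemma \ref{lem:poincare}, so that each summand satisfies $C_{\mfv,\mfu}(\aa,\bb)^2\le \prod_{j\in\mfu}(b_j-a_j)^2$. Since there are precisely $2^{\#\mfu}$ subsets $\mfv\subseteq\mfu$, summing this uniform bound gives
\[
C_\mfu(\aa,\bb)=\sum_{\mfv\subseteq\mfu} C_{\mfv,\mfu}(\aa,\bb)^2 \le 2^{\#\mfu}\prod_{j\in\mfu}(b_j-a_j)^2,
\]
which completes the argument. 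In short, the genuine content lives entirely in Theorem \ref{thm:lowerbound}, and the present corollary is a straightforward repackaging of that lower bound together with an elementary counting estimate for the Poincar\'e constants.
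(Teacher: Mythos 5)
Your proof is correct and follows essentially the same route as the paper: both isolate the single term indexed by $\mfu$ from the lower bound of Theorem \ref{thm:lowerbound} (valid since all summands are nonnegative) and then multiply through by the positive factor $\gamma_\mfu\widetilde{C}_\mfu^{-1}$. Your explicit verification of the estimate $C_\mfu(\aa,\bb)\le 2^{\#\mfu}\prod_{j\in\mfu}(b_j-a_j)^2$ via Lemma \ref{lem:poincare} and the count of $2^{\#\mfu}$ subsets is a welcome addition that the paper leaves implicit.
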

\begin{proof}
For  $f\in H^1_\mix([\aa,\bb])$ we obtain, using Theorem \ref{thm:lowerbound}, 
\begin{eqnarray*}  
\|f_{\mfu;\cc}\|^2_{H^1_{\mix}([\aa_{\mfu}, \bb_{\mfu} ])}   & \le &
\frac{\gamma_\mfu}{\widetilde{C}_\mfu} \gamma_\mfu^{-1}
\widetilde{C}_\mfu \|f_{\mfu;\cc}\|_{H^1_\mix([\aa_\mfu,\bb_\mfu])}^2 
 \le  
C_{\mfu}(\aa,\bb)
\gamma_{\mfu}\left\|f\right\|^2_{H^1_{\cc;\ggamma}([\aa,\bb] )} \\
& \le & \frac{\gamma_\mfu}{\widetilde{C}_\mfu}
\sum_{\mfv\subseteq\mfD} \gamma_\mfv^{-1}\widetilde{C}_\mfv
\|f_{\mfv;\cc}\|_{H^1_\mix([\aa_\mfv,\bb_\mfv])}^2
 =   \frac{\gamma_\mfu}{\widetilde{C}_\mfu} \|f\|^2_{H^1_{\cc;\ggamma}([\aa, \bb])}.
\end{eqnarray*}
\end{proof}

After this, we are able to give a first bound on the $L_\infty$-norm
of $f_2$.

\begin{corollary}\label{cor:final}
For any $f\in H^1_\mix([\aa,\bb])$ and any downward closed set
$\Lambda\subseteq\cP(\mfD)$, the error bound
\[
\|f-f_\Lambda\|_{L_\infty([\aa,\bb])} \le C_{emb} \left(
\sum_{\mfu\in\complement \Lambda} \widetilde{K}_{\mfu}(\aa,\bb)
\gamma_\mfu^{1/2}  \right)
\|f\|_{H_{\cc;\ggamma}^1([\aa,\bb])}.
\]
If $0\le b_j-a_j\le 1$ for $1\le j\le d$, this bound simplifies to
\[
\|f-f_\Lambda\|_{L_\infty([\aa,\bb])} \le
C_{emb}\left(\sum_{\mfu\in\complement \Lambda} 2^{\#\mfu/2}
\gamma_\mfu^{1/2} \right) \|f\|_{H_{\cc;\ggamma}^1([\aa,\bb])}.
\]
In any case, $C_{emb}$ is the constant of the continuous embedding
$H^1_\mix([\aa,\bb])\to L_\infty([\aa,\bb])$ and
\[
\widetilde{K}_\mfu(\aa,\bb) =\left(\prod_{j\in\mfD\setminus \mfu} (b_j-a_j)\right)^{1/2}
  2^{\frac{\#\mfu}{2}}\left(\prod_{j\in\mfu}(b_j-a_j)\right) \
\]
\end{corollary}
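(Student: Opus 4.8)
The plan is to reduce the claim to the componentwise estimate of Corollary~\ref{cor:anchorbound}. By definition $f-f_\Lambda=f_{\complement\Lambda}=\sum_{\mfu\in\complement\Lambda}f_{\mfu;\cc}$, so the triangle inequality in $L_\infty([\aa,\bb])$ gives
\[
\|f-f_\Lambda\|_{L_\infty([\aa,\bb])} \le \sum_{\mfu\in\complement\Lambda}\|f_{\mfu;\cc}\|_{L_\infty([\aa,\bb])},
\]
and it suffices to bound each summand by $C_{emb}\,\widetilde{K}_\mfu(\aa,\bb)\,\gamma_\mfu^{1/2}\,\|f\|_{H^1_{\cc;\ggamma}([\aa,\bb])}$.

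The central step is to pass through the mixed-regularity norm on the \emph{full} cube. Since $f_{\mfu;\cc}$ depends only on the variables with indices in $\mfu$, it lies in $H^1_\mix([\aa,\bb])$, and I would apply the continuous embedding $H^1_\mix([\aa,\bb])\to L_\infty([\aa,\bb])$ directly on $[\aa,\bb]$ to obtain $\|f_{\mfu;\cc}\|_{L_\infty([\aa,\bb])}\le C_{emb}\|f_{\mfu;\cc}\|_{H^1_\mix([\aa,\bb])}$; the advantage is that this produces exactly the stated $d$-dimensional constant $C_{emb}$ and avoids any $\mfu$-dependent embedding constant. Next I would rewrite this full-domain norm in terms of the face norm on $[\aa_\mfu,\bb_\mfu]$. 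The key observation, already used in the proof of Theorem~\ref{thm:equivalence}, is that $D^\mfv f_{\mfu;\cc}=0$ unless $\mfv\subseteq\mfu$, and each surviving derivative is constant in the variables of $\mfD\setminus\mfu$. Combining this with the volume identity of the type (\ref{constantintegral}) yields
\[
\|f_{\mfu;\cc}\|_{H^1_\mix([\aa,\bb])}^2 = \Big(\prod_{j\in\mfD\setminus\mfu}(b_j-a_j)\Big)\,\|f_{\mfu;\cc}\|_{H^1_\mix([\aa_\mfu,\bb_\mfu])}^2,
\]
which is precisely the source of the factor $\big(\prod_{j\in\mfD\setminus\mfu}(b_j-a_j)\big)^{1/2}$ in $\widetilde{K}_\mfu$.

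Finally I would invoke Corollary~\ref{cor:anchorbound} to bound $\|f_{\mfu;\cc}\|_{H^1_\mix([\aa_\mfu,\bb_\mfu])}^2\le C_\mfu(\aa,\bb)\,\gamma_\mfu\,\|f\|_{H^1_{\cc;\ggamma}([\aa,\bb])}^2$ and insert the explicit estimate $C_\mfu(\aa,\bb)\le 2^{\#\mfu}\prod_{j\in\mfu}(b_j-a_j)^2$. Taking square roots and collecting the three factors reproduces exactly $\widetilde{K}_\mfu(\aa,\bb)=\big(\prod_{j\in\mfD\setminus\mfu}(b_j-a_j)\big)^{1/2}\,2^{\#\mfu/2}\,\prod_{j\in\mfu}(b_j-a_j)$; summing over $\mfu\in\complement\Lambda$ gives the first displayed bound, and bounding every factor $(b_j-a_j)$ by $1$ under the hypothesis $0\le b_j-a_j\le 1$ gives the simplified version. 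I do not expect a genuine obstacle here, since the result is an assembly of earlier estimates; the only point requiring care is the bookkeeping of the volume constants, in particular keeping the embedding on the full cube (so that $C_{emb}$ is the $d$-dimensional constant named in the statement) while applying Corollary~\ref{cor:anchorbound} on the low-dimensional face $[\aa_\mfu,\bb_\mfu]$, with the displayed identity reconciling the two.
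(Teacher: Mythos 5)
Your proposal is correct and follows essentially the same route as the paper: the Sobolev embedding into $L_\infty([\aa,\bb])$, the volume identity relating $\|f_{\mfu;\cc}\|_{H^1_\mix([\aa,\bb])}$ to $\|f_{\mfu;\cc}\|_{H^1_\mix([\aa_\mfu,\bb_\mfu])}$, and Corollary \ref{cor:anchorbound} with the explicit bound $C_\mfu(\aa,\bb)\le 2^{\#\mfu}\prod_{j\in\mfu}(b_j-a_j)^2$. The only cosmetic difference is the order of operations — you apply the triangle inequality in $L_\infty$ and then embed each component $f_{\mfu;\cc}$ separately, while the paper first embeds $f-f_\Lambda$ and then splits the $H^1_\mix$ norm — which yields identical estimates.
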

\begin{proof}
Using the Sobolev embedding theorem for $H^1_\mix([\aa,\bb])$, the
same ideas that have led to 
(\ref{constantintegral}) and Corollary \ref{cor:anchorbound} yield now
\begin{eqnarray*}
  \lefteqn{ \|f-f_\Lambda\|_{L_\infty([\aa,\bb])}}\\
  & \le & C_{emb}
  \|f-f_\Lambda\|_{H^1_\mix([\aa,\bb])} 
   \le  C_{emb} \sum_{\mfu\in \complement\Lambda}
  \|f_{\mfu;\cc}\|_{H^1_\mix([\aa,\bb])}\\
  & \le & C_{emb} \sum_{\mfu\in \complement\Lambda}
  \left(\prod_{j\in\mfD\setminus \mfu} (b_j-a_j)\right)^{1/2}
  \|f_{\mfu;\cc}\|_{H^1_\mix([\aa_\mfu,\bb_\mfu])}\\
  & \le & C_{emb} \sum_{\mfu\in \complement\Lambda}
  \left(\prod_{j\in\mfD\setminus \mfu} (b_j-a_j)\right)^{1/2}
  2^{\frac{\#\mfu}{2}}\left(\prod_{j\in\mfu}(b_j-a_j)\right) \gamma_\mfu^{1/2}
  \|f_{\mfu;\cc}\|_{H^1_\mix([\aa_\mfu,\bb_\mfu])}.
\end{eqnarray*}
\end{proof}

\section{Application to Parametric Partial Differential Equations}

In this section, we focus on the setting of \cite{Kuo-etal-12-1} in
dealing with parametric PDEs. Let $\dom\subseteq \R^{\ell}$ for $\ell\in
\{1,2,3\}$ be a bounded Lipschitz domain, which we will call the
spatial domain. To avoid confusion between the spatial domain $\dom$
and $\mfD=\{1,\ldots,d\}$, we will only use $\{1,\ldots,d\}$ for the
latter throughout this section. The parameter domain is given by $\Omega =
[-1/2,1/2]^d$ with possibly large $d\in\N$. In principle, even
$d=\infty$ is possible, see the discussion in \cite[Theorem
  5.1]{Kuo-etal-12-1} for the additionally induced error by
truncating the dimension in this form.

For the elliptic PDE under consideration in this section, the diffusion coefficient
will be {\em finite noise}, i.e. it is of the form
\begin{equation}\label{diffusion}
	a(\xx,\yy):=\bar{a}(\xx) + \sum_{j=1}^{d} y_j \psi_{j}(\xx),
        \qquad (\xx,\yy)\in\dom\times\Omega,
\end{equation}
with  given functions $\bar{a},\psi_j \in L_{\infty}(\dom)$ for $1\le j\le
d$ and we consider the following weak problem. For every $\yy\in
\Omega$, we seek the {\em weak solution} $\uu(\cdot,\yy)\in H^1_0(\dom)$ satisfying
\begin{equation}\label{weakPDE}
\int_{\dom} a(\xx,\yy) \nabla_{\xx} u(\xx,\yy) \cdot \nabla_{\xx}
v(\xx) d \xx=F(v), \qquad  v\in H^{1}_{0}(\dom). 
\end{equation}

The following statement of \cite[Theorems 3.1 \&
  4.2]{Kuo-etal-12-1} concerning the weak solution of (\ref{weakPDE})
with diffusion coefficient of the form (\ref{diffusion}) summarizes
what we need here. Note that the cited results even hold in the case $d=\infty$.

\begin{theorem}\label{thm:existenceuniqunessandboundsforu}
  Let $\bar{a},\psi_j \in L_{\infty}(\dom)$ for $1\le j \le d$. The
  assumption $\sum_{j=1}^{d} \|\psi_{j}\|_{L_{\infty}(\dom)}<\infty$
  is clearly satisfied as $d<\infty$. Let the parameter field satisfy  
\[
0<a_{\min} \le a(\xx,\yy) \le a_{\max}, \qquad (\xx,\yy)\in \dom \times \Omega.
\]
For every $\yy\in \Omega$ there is a unique $u(\cdot,\yy) \in H^{1}_{0}(\dom)$
satisfying the weak equation (\ref{weakPDE}).

Moreover, for every $\yy\in \Omega$ and $\mfv\subseteq\{1,\ldots,d\}$, there is the  bound 
\[
\left\| \frac{\partial^{\#\mfv}}{\partial\yy_{\mfv}}u(\cdot,\yy)
\right\|_{H^{1}_{0}(\dom)}\le \frac{\|F\|_{H^{-1}(\dom)}}{a_{\min}}  
\#\mfv ! \prod_{j\in \mfv} \left( \frac{\|\psi_{j}
  \|_{L_{\infty}(\dom)}}{a_{\min}}\right).
\]
\end{theorem}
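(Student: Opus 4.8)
The plan is to treat the two assertions separately: existence and uniqueness follow from the Lax--Milgram theorem, while the derivative bounds come from differentiating the weak formulation and running an induction on $\#\mfv$. For existence and uniqueness I would fix $\yy\in\Omega$ and study the bilinear form $B_\yy(u,v):=\int_\dom a(\xx,\yy)\nabla_\xx u\cdot\nabla_\xx v\,d\xx$ on $H^1_0(\dom)$. Since $a(\cdot,\yy)\le a_{\max}$ pointwise, $B_\yy$ is bounded, and since $a(\cdot,\yy)\ge a_{\min}>0$, it is coercive, with $B_\yy(u,u)\ge a_{\min}\|\nabla u\|_{L_2(\dom)}^2$; via the Poincaré inequality (equivalently, the fact that $\|\nabla\cdot\|_{L_2(\dom)}$ is an equivalent norm on $H^1_0(\dom)$) this is coercivity in the $H^1_0$-norm. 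As $F\in H^{-1}(\dom)$, Lax--Milgram produces a unique $u(\cdot,\yy)\in H^1_0(\dom)$. Testing (\ref{weakPDE}) with $v=u(\cdot,\yy)$ and combining coercivity with $F(u)\le\|F\|_{H^{-1}(\dom)}\|u\|_{H^1_0(\dom)}$ already yields the claimed estimate in the case $\mfv=\emptyset$, where $\#\mfv!=1$ and the empty product is $1$, so the right-hand side reduces to $\|F\|_{H^{-1}(\dom)}/a_{\min}$.

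For the derivative bounds I would exploit that $a$ is \emph{affine} in $\yy$, so $\partial_{y_j}a=\psi_j$ while all second- and higher-order parameter derivatives of $a$ vanish. Differentiating (\ref{weakPDE}) by $\partial^\mfv:=\partial^{\#\mfv}/\partial\yy_\mfv$ and applying the Leibniz rule, only the term carrying the full derivative of $u$ and the terms producing exactly one factor $\psi_j$ survive, giving
\[
\int_\dom a\,\nabla(\partial^\mfv u)\cdot\nabla v\,d\xx
=-\sum_{j\in\mfv}\int_\dom\psi_j\,\nabla(\partial^{\mfv\setminus\{j\}}u)\cdot\nabla v\,d\xx
\]
for all $v\in H^1_0(\dom)$, the derivative of the $\yy$-independent right-hand side $F$ being zero. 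Testing with $v=\partial^\mfv u$, using coercivity on the left and Cauchy--Schwarz with $|\psi_j|\le\|\psi_j\|_{L_\infty(\dom)}$ on the right, then dividing by $\|\partial^\mfv u\|_{H^1_0(\dom)}$, produces the recursion
\[
\|\partial^\mfv u\|_{H^1_0(\dom)}\le\sum_{j\in\mfv}\frac{\|\psi_j\|_{L_\infty(\dom)}}{a_{\min}}\,\|\partial^{\mfv\setminus\{j\}}u\|_{H^1_0(\dom)}.
\]

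An induction on $\#\mfv$ then closes the argument. Writing $\beta_j:=\|\psi_j\|_{L_\infty(\dom)}/a_{\min}$ and assuming the asserted bound for all proper subsets of $\mfv$, each summand contributes $\beta_j\cdot\frac{\|F\|_{H^{-1}(\dom)}}{a_{\min}}(\#\mfv-1)!\prod_{k\in\mfv\setminus\{j\}}\beta_k=\frac{\|F\|_{H^{-1}(\dom)}}{a_{\min}}(\#\mfv-1)!\prod_{k\in\mfv}\beta_k$, and summing the $\#\mfv$ identical terms turns $(\#\mfv-1)!$ into $\#\mfv!$, reproducing the stated estimate exactly.

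The main obstacle is not the combinatorics, which is clean, but the rigorous justification of parametric differentiability: one must show that $\yy\mapsto u(\cdot,\yy)$ is genuinely (mixed) differentiable as an $H^1_0(\dom)$-valued map and that its derivatives satisfy the differentiated weak equations used above. I would establish this inductively, defining $w_\mfv\in H^1_0(\dom)$ as the Lax--Milgram solution of the differentiated problem and then verifying through difference quotients that $w_\mfv=\partial^\mfv u$. Here the point is that the coercivity constant $a_{\min}$ is \emph{uniform} in $\yy$, so the difference-quotient estimates are uniform and the limits exist, legitimizing the formal computation. Since the quantity we need is only the $H^1_0$-norm of these derivatives, the inductive construction delivers both the objects and the bounds at once; this is precisely the route of \cite{Kuo-etal-12-1}, whose result we are quoting.
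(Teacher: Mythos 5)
Your proposal is correct, but note that the paper itself offers no proof of this statement: it is quoted verbatim as a summary of Theorems 3.1 and 4.2 of \cite{Kuo-etal-12-1}, with the proof deferred entirely to that reference. What you have written is essentially a reconstruction of the argument in that cited source: Lax--Milgram with the $\yy$-uniform coercivity constant $a_{\min}$, the observation that the affine dependence of $a$ on $\yy$ collapses the Leibniz expansion to the single recursion
\[
\bigl\|\partial^\mfv u(\cdot,\yy)\bigr\|_{H^1_0(\dom)}\le\sum_{j\in\mfv}\frac{\|\psi_j\|_{L_\infty(\dom)}}{a_{\min}}\bigl\|\partial^{\mfv\setminus\{j\}}u(\cdot,\yy)\bigr\|_{H^1_0(\dom)},
\]
and the induction on $\#\mfv$ that turns the $\#\mfv$ identical summands into the factor $\#\mfv!$. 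Your combinatorics and the base case $\mfv=\emptyset$ are exact (with the usual convention $\|v\|_{H^1_0(\dom)}=\|\nabla v\|_{L_2(\dom)}$, which is also the convention of \cite{Kuo-etal-12-1}; with the full $H^1$-norm one would pick up Poincar\'e constants not present in the stated bound). You also correctly flag the only genuinely delicate point, namely that the differentiated weak equations require justifying that $\yy\mapsto u(\cdot,\yy)$ is differentiable as an $H^1_0(\dom)$-valued map, and your difference-quotient strategy with uniform-in-$\yy$ coercivity is the standard way this is handled. In short: where the paper quotes, you prove, and your proof follows the route of the quoted reference.
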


Next, we extend the mixed regularity Sobolev space $H_\mix^1(\Omega)$
and the weighted mixed regularity Sobolev space
$H_{\ggamma}^1(\Omega):=H_{\00;\ggamma}^1(\Omega)$ to the Bochner
spaces $H_\mix^1(\Omega;H_0^1(\dom))$ and 
$H_{\ggamma}^1(\Omega;H_0^1(\dom))$, respectively. As indicated in the
notation, throughout this section, we will use the anchor $\cc=\00$.

As the former space is only a special case of the latter space with weights
$\gamma_\mfv=1$, we concentrate on the latter. Here, the norm is 
given by
\begin{eqnarray*}
  \|w\|_{H_\ggamma^1(\Omega;H_0^1(\dom))}^2&:=&\sum_{\mfv\subseteq\{1,\ldots,d\}}
  \frac{1}{\gamma_\mfv}
  \int_{\left[-\frac{1}{2};\frac{1}{2} \right]^{\#\mfv}} \left\|
  \frac{\partial^{|\mfv|}}{\partial\yy_{\mfv}}w(\cdot,(\yy;\00)_\mfv)
  \right\|^2_{H^{1}_{0}(\dom)} d \yy_{\mfv} \\
  &= & \sum_{j=1}^\ell \int_\dom \left\|\frac{\partial}{\partial x_j}
  w(\xx,\cdot)\right\|_{H_\ggamma^1(\Omega)}^2 d\xx.
\end{eqnarray*}

With this notation at hand, we can proceed as follows. By Theorem
\ref{thm:existenceuniqunessandboundsforu} we know that the weak
solution satisfies $u\in H_\ggamma^1(\Omega;H_0^1(\dom))=H_\ggamma^1(\Omega)\otimes
H_0^1(\dom)$. As a matter of fact, the solution is infinitely smooth
with respect to the parameter $\yy\in \Omega$. Thus, we can 
bound the  weighted Bochner norm, using the estimate from Theorem
\ref{thm:existenceuniqunessandboundsforu}  to derive
\begin{equation}\label{Bochnerbound1}
\|u\|_{H_\ggamma^1(\Omega;H_0^1(\dom))} \le
\frac{\|F\|_{H^{-1}(\dom)}}{a_{\min}} \left(\sum_{\mfv \subseteq
  \{1,\dots,d\}} \frac{1}{\gamma_\mfv}
  \left( \#\mfv ! \prod_{j\in \mfv} \left( \frac{\|\psi_{j}
    \|_{L_{\infty}(\dom)}}{a_{\min}}\right)\right)^2
  \right)^{\frac{1}{2}}. 
\end{equation}

Finally, in the context of uncertainty quantification, one often is
not particularly interested in the  function $u$ itself but in a derived
quantity. This can be described, using a linear bounded functional
$G\in H^{-1}(\dom)$, i.e. one is interested in the function
$u_G:\Omega\to \R$, defined by $u_G(\yy):=G(u(\cdot,\yy))$, $\yy\in\Omega$. 
The above considerations show, that we have

\begin{equation}\label{uqbound1}
  \|u_G\|_{H^1_\ggamma(\Omega)}\le \|G\|_{H^{-1}(\dom)}
  \frac{\|F\|_{H^{-1}(\dom)}}{a_{\min}} \left(\sum_{\mfv \subseteq
    \{1,\dots,d\}} \frac{1}{\gamma_\mfv} 
		\left( \#\mfv ! \prod_{j\in \mfv} \left(
                \frac{\|\psi_{j}
                  \|_{L_{\infty}(\dom)}}{a_{\min}}\right)\right)^2\right)^{\frac{1}{2}}. 
\end{equation}

Next, turning to approximations of the functions $u$ and $u_G$, we
proceed as in the previous sections. 
For a downward closed subset $\Lambda\subseteq \cP(\{1,\ldots,d\})$ we
let $u_\Lambda$  be the 
$\Lambda$-approximations of $u$, i.e.
\[
u_\Lambda(\xx,\yy) = \sum_{\mfv\in \Lambda}
u(\xx,\cdot)_\mfv(\yy_\mfv),
\qquad (\xx,\yy)\in\dom\times\Omega,
\]
and $u_{G,\Lambda} (\yy):= G(u_\Lambda(\cdot,\yy))$, $\yy\in\Omega$.
Then, Corollary \ref{cor:final} yields
\begin{eqnarray*}
  \|u(\cdot,\yy)-u_\Lambda(\cdot,\yy)\|_{H_0^1(\dom)}^2 & = &
  \sum_{j=1}^\ell \int_\dom \left|\frac{\partial}{\partial
    x_j}\left(u(\xx,\yy)-u_\Lambda(\xx,\yy)\right)\right|^2 d x \\
  &\le &\sum_{j=1}^\ell \int_\dom C_{emb}^2 \left(\sum_{\mfv\in
    \complement \Lambda} 2^{\#\mfv/2} \gamma_\mfv^{1/2} \right)^2
  \left\|\frac{\partial}{\partial
    x_j}u(\xx,\cdot)\right\|_{H_\ggamma^1(\Omega)}^2 d \xx\\
  & = & C_{emb}^2 \left(\sum_{\mfv\in
    \complement \Lambda} 2^{\#\mfv/2} \gamma_\mfv^{1/2}
  \right)^2\|u\|_{H_\ggamma^1(\Omega;H_0^1(\dom))}^2,
\end{eqnarray*}
which shows the next result.
\begin{corollary}\label{cor:uqbound1}
Let $u\in H_\ggamma^1(\Omega;H_0^1(\dom))$ be the weak solution of the
parametric PDE and let $u_\Lambda$ be the $\Lambda$-term approximation
of $u$ using a downward closed subset
$\Lambda\subseteq\cP(\{1,\ldots,d\})$. Let $G\in H^{-1}(\dom)$
describe the quantity of interest. Then,
\begin{eqnarray*}
\|u-u_\Lambda\|_{L_\infty(\Omega;H_0^1(\dom))} &\le&
C_{emb}\sum_{\mfv\in \complement\Lambda} 2^{\#\mfv/2}\gamma_\mfv^{1/2} 
\|u\|_{H_\ggamma^1(\Omega;H_0^1(\dom))},\\
\|u_G- u_{G,\Lambda}\|_{H_\ggamma^1(\Omega)} & \le &
\|G\|_{H^{-1}(\dom)} C_{emb}\sum_{\mfv\in \complement\Lambda} 2^{\#\mfv/2}\gamma_\mfv^{1/2} 
\|u\|_{H_\ggamma^1(\Omega;H_0^1(\dom))},
\end{eqnarray*}
where the norm $\|u\|_{H_\ggamma^1(\Omega;H_0^1(\dom))}$ can 
be bounded by (\ref{Bochnerbound1}).
\end{corollary}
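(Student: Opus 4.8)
\subsection*{Proof proposal}

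The plan is to treat the two estimates separately, reducing both to the scalar machinery of Section~\ref{sec:owen} by exploiting that $G$ and the spatial derivatives $\partial/\partial x_j$ act only on the variable $\xx\in\dom$ and therefore commute with the $\yy$-anchored decomposition and with the parameter weights $\gamma_\mfv$.

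For the first inequality I would fix $\xx\in\dom$ and $1\le j\le\ell$ and apply Corollary~\ref{cor:final} to the scalar function $\yy\mapsto\frac{\partial}{\partial x_j}u(\xx,\yy)\in H_\ggamma^1(\Omega)$. Since differentiation in $\xx$ commutes with the anchored decomposition in $\yy$, we have $\frac{\partial}{\partial x_j}u_\Lambda(\xx,\cdot)=\bigl(\frac{\partial}{\partial x_j}u(\xx,\cdot)\bigr)_\Lambda$, so Corollary~\ref{cor:final} bounds $\bigl\|\frac{\partial}{\partial x_j}(u-u_\Lambda)(\xx,\cdot)\bigr\|_{L_\infty(\Omega)}$ by $C_{emb}\bigl(\sum_{\mfv\in\complement\Lambda}2^{\#\mfv/2}\gamma_\mfv^{1/2}\bigr)\bigl\|\frac{\partial}{\partial x_j}u(\xx,\cdot)\bigr\|_{H_\ggamma^1(\Omega)}$. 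For each fixed $\yy$ this majorizes $\bigl|\frac{\partial}{\partial x_j}(u-u_\Lambda)(\xx,\yy)\bigr|$; squaring, integrating over $\dom$ and summing over $j$ reconstructs $\|(u-u_\Lambda)(\cdot,\yy)\|_{H_0^1(\dom)}^2$ on the left and $\|u\|_{H_\ggamma^1(\Omega;H_0^1(\dom))}^2$ on the right, and taking the supremum over $\yy$ gives the $L_\infty(\Omega;H_0^1(\dom))$ bound, with $\|u\|_{H_\ggamma^1(\Omega;H_0^1(\dom))}$ controlled by (\ref{Bochnerbound1}). This reproduces the displayed pre-corollary computation.

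For the second inequality I would first record the mapping property of $G$: because $|G(v)|\le\|G\|_{H^{-1}(\dom)}\|v\|_{H_0^1(\dom)}$ and $G$ commutes with the parameter derivatives $D^\mfv$ and with integration in $\yy$, the induced operator $w\mapsto G(w(\cdot,\cdot))$ maps $H_\ggamma^1(\Omega;H_0^1(\dom))$ into $H_\ggamma^1(\Omega)$ with norm at most $\|G\|_{H^{-1}(\dom)}$, and by linearity of the anchored decomposition $G(u_\Lambda)=u_{G,\Lambda}$. Hence $\|u_G-u_{G,\Lambda}\|_{H_\ggamma^1(\Omega)}\le\|G\|_{H^{-1}(\dom)}\|u-u_\Lambda\|_{H_\ggamma^1(\Omega;H_0^1(\dom))}$, and it remains to bound the tail $\|u-u_\Lambda\|_{H_\ggamma^1(\Omega;H_0^1(\dom))}$. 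Via the Bochner form of the alternative representation (\ref{repalternative}) this tail equals $\bigl(\sum_{\mfv\in\complement\Lambda}\gamma_\mfv^{-1}\|D^\mfv u_{\mfv;\00}\|_{L_2(\Omega_\mfv;H_0^1(\dom))}^2\bigr)^{1/2}$, which I would majorize componentwise through the Bochner analogue of Corollary~\ref{cor:anchorbound}.

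The main obstacle is matching the precise constant. The orthogonality built into (\ref{repalternative}) bounds the tail simply by the full norm, yielding the sharp factor-free estimate $\|u_G-u_{G,\Lambda}\|_{H_\ggamma^1(\Omega)}\le\|G\|_{H^{-1}(\dom)}\|u\|_{H_\ggamma^1(\Omega;H_0^1(\dom))}$, whereas the stated factor $C_{emb}\sum_{\mfv\in\complement\Lambda}2^{\#\mfv/2}\gamma_\mfv^{1/2}$ is exactly the constant of Corollary~\ref{cor:final}, in which the power $\gamma_\mfv^{+1/2}$ enters only upon passing from the weighted inner-product norm to an unweighted ($L_\infty$ or $H^1_\mix$) norm through Corollary~\ref{cor:anchorbound}; on the weighted $H_\ggamma^1(\Omega)$-norm the intrinsic $\gamma_\mfv^{-1}$ weights preclude its appearance. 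I therefore expect the stated factor to emerge naturally for the unweighted target: applying Corollary~\ref{cor:final} to the scalar $u_G\in H^1_\mix(\Omega)$ and then using the mapping property (cf.\ (\ref{uqbound1})) to replace $\|u_G\|_{H_\ggamma^1(\Omega)}$ by $\|G\|_{H^{-1}(\dom)}\|u\|_{H_\ggamma^1(\Omega;H_0^1(\dom))}$ gives $\|u_G-u_{G,\Lambda}\|_{L_\infty(\Omega)}\le\|G\|_{H^{-1}(\dom)}C_{emb}\bigl(\sum_{\mfv\in\complement\Lambda}2^{\#\mfv/2}\gamma_\mfv^{1/2}\bigr)\|u\|_{H_\ggamma^1(\Omega;H_0^1(\dom))}$. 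The delicate point to settle is thus the norm on the left of the second estimate: the stated $\gamma_\mfv^{1/2}$-factor is the correct constant for the $L_\infty(\Omega)$-norm of $u_G-u_{G,\Lambda}$, while the weighted $H_\ggamma^1(\Omega)$-norm obeys the sharper factor-free bound, and I would reconcile the statement by routing the argument for the factor through the unweighted norm.
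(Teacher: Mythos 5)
Your proof of the first inequality is precisely the paper's own argument: fix $\xx\in\dom$ and $1\le j\le \ell$, apply Corollary~\ref{cor:final} to the scalar slice $\yy\mapsto \frac{\partial}{\partial x_j}u(\xx,\yy)$ (the $x$-derivative commutes with the anchored decomposition because the anchored components are finite linear combinations of $\yy$-slices), then square, integrate over $\dom$, sum over $j$ and take the supremum in $\yy$. This is exactly the displayed computation preceding the corollary in the paper.

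Your discussion of the second inequality is also sound, and it identifies a genuine defect in the statement itself rather than a gap in your argument. The paper offers no separate derivation of the second bound (the pre-corollary computation covers only the first), and as literally stated it cannot hold: take $u(\xx,\yy)=\phi(\xx)\,g_\mfw(\yy_\mfw)$ for a single $\mfw\in\complement\Lambda$ with $g_\mfw$ annihilating at the anchor, and $G=\langle \phi,\cdot\rangle_{H_0^1(\dom)}/\|\phi\|_{H_0^1(\dom)}$; then both sides reduce to multiples of $\gamma_\mfw^{-1/2}\|D^\mfw g_\mfw\|_{L_2}$ and the claimed estimate becomes $1\le C_{emb}\sum_{\mfv\in\complement\Lambda}2^{\#\mfv/2}\gamma_\mfv^{1/2}$, which fails exactly in the small-weight regime that Theorem~\ref{thm:bounds} later arranges. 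More generally, for any fixed $u$ with a nonvanishing tail component, sending the tail weights to zero makes the left-hand side blow up like $\max_{\mfv\in\complement\Lambda}\gamma_\mfv^{-1/2}$ while the right-hand side stays bounded, so restricting to PDE solutions does not rescue the statement. Your two replacement statements are the correct ones: the factor-free bound $\|u_G-u_{G,\Lambda}\|_{H^1_\ggamma(\Omega)}\le \|G\|_{H^{-1}(\dom)}\|u\|_{H^1_\ggamma(\Omega;H_0^1(\dom))}$, obtained from the Bochner form of (\ref{repalternative}) and the boundedness of $G$, and the factored $L_\infty(\Omega)$ bound obtained by applying Corollary~\ref{cor:final} to $u_G$ and then using the mapping property behind (\ref{uqbound1}). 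The latter is what the paper's final corollary actually needs, and its proof there recovers the required smallness anyway through the norm-equivalence constant of Theorem~\ref{thm:equivalence} (see the closing remark, where $\max_{\mfu\in\complement\Lambda}\gamma_\mfu$ enters), so the downstream results survive your correction. The only step deserving an explicit line in a final write-up is the interchange of $G$ with the parameter derivatives $D^\mfv$, which is harmless here since $u$ is smooth in $\yy$ and $G$ is bounded on $H_0^1(\dom)$.
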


To bound this further, we need to specify the weights $\gamma_\mfv$ in
such a way that we can bound the two terms on the right-hand side,
i.e. essentially the term
\[
\left(\sum_{\mfv\in \complement\Lambda}
2^{\#\mfv/2}\gamma_\mfv^{1/2}\right) \left(\sum_{\mfv \subseteq
  \{1,\dots,d\}} \frac{1}{\gamma_\mfv}
  \left( \#\mfv ! \prod_{j\in \mfv} \left( \frac{\|\psi_{j}
    \|_{L_{\infty}(\dom)}}{a_{\min}}\right)\right)^2
  \right)^{\frac{1}{2}}. 
  \]
  This will be done in the next sub-section.
  
  \subsection{Bounds for Specific Weights}
  
Again, we follow \cite{Kuo-etal-12-1} to define and analyze the
weights.

\begin{definition} Let $\mfU\subseteq \cP(\N)$ contain all finite subsets of
  $\N$, i.e. $\mfu\subseteq\N$ belongs to  $\mfU$ if and only if
  $\#\mfu<\infty$. For given weights 
  $\alpha_j>0$, $j\in\N$ define $ \gamma: (1/2,1] \times \N_0 \times \N
\times \mfU \to (0,\infty)$ via 
\begin{equation}\label{eq:defgamma}
\gamma(c,n,m,\mfu):=\gamma_{\mfu}(c;(n,m)):= \left( \frac{
  ( \#\mfu+n) !}{m} \prod_{j \in \mfu}
\frac{\alpha_{j}}{\sqrt{\rho(c)}}\right)^{\frac{2}{1+c}}, 
\end{equation}
where $\rho: (\frac{1}{2},1] \to (0,\infty)$ is defined by
$\rho(c) =\frac{2\zeta(2c)}{(2\pi^2)^{c}}$,
using the classical $\zeta$-function.
\end{definition}
For $n=0$ and $m=1$, this corresponds to the choice in
\cite[Eq. (6.3)]{Kuo-etal-12-1}. For $n=3$ and $m=6$, this corresponds
to the choice of \cite[Eq. (65)]{Kuo-etal-15-1}.

\begin{lemma}\label{lem:sequence}
Let $c\in \left(1/2,1\right]$ and $n\in\N_0$, $m\in \N$ be given. Let
$\alpha_j>0$, $j\in \N$,  be a sequence such that 
\begin{equation}\label{eq:condalpha}
  \sum_{j\in \N } \alpha_{j}^{\frac{1}{1+c}} \in \left(0,
  \frac{\rho(c)^{\frac{1}{2(1+c)}}}{2} \right). 
\end{equation}
Let   $\Lambda:=\left\{\mfu \in \mfU : \#\mfu\le L \right\}$ contain
all subsets of $\N$ with at most $L$ elements. Then, for the product and order
dependent weights given in \eqref{eq:defgamma}, there is a constant
$C_2(n)$ depending only on $n$ such that
\begin{equation}\label{eq:epsilonbound}
  \sum_{\mfu\in \complement\Lambda}2^{\#\mfu}\gamma^{\frac{1}{2}}_{\mfu}(c;(n,m)) \le 
  C_{2}(n) m^{-\frac{1}{c+1}} \frac{\left( \sum_{j\in \N} \left(
    \frac{2^{c+1}}{\sqrt{\rho(c)}} \alpha_{j}\right)
    ^{\frac{1}{1+c}}\right)^{L+1} }{1- \sum_{j\in \N}  
    \left( \frac{2^{c+1}}{\sqrt{\rho(c)}} \alpha_{j} \right) ^{\frac{1}{1+c}} }
\end{equation}
in particular, we have $\sum_{\mfu\in
  \complement\Lambda}2^{\#\mfu}\gamma^{\frac{1}{2}}_{\mfu}(c;(n,m))
\to 0$ as $L\to \infty$. 
\end{lemma}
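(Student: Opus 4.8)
The plan is to make the weight completely explicit, reorganise the sum over $\complement\Lambda$ according to the cardinality $k=\#\mfu$, and collapse the result into a single geometric series. Writing $\beta_j:=(\alpha_j/\sqrt{\rho(c)})^{1/(1+c)}$, the definition \eqref{eq:defgamma} gives
\[
2^{\#\mfu}\gamma_\mfu^{1/2}(c;(n,m)) = 2^{k}\,m^{-\frac{1}{1+c}}\,\bigl((k+n)!\bigr)^{\frac{1}{1+c}}\prod_{j\in\mfu}\beta_j,\qquad k=\#\mfu.
\]
Since $\complement\Lambda$ consists exactly of the finite subsets of $\N$ with $\#\mfu\ge L+1$, I would group the sum by $k$ and use the multinomial (elementary-symmetric) bound $\sum_{\#\mfu=k}\prod_{j\in\mfu}\beta_j\le \frac{1}{k!}\bigl(\sum_{j\in\N}\beta_j\bigr)^{k}$, valid because all $\beta_j\ge 0$. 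Setting $S:=\sum_{j\in\N}\beta_j$ this yields
\[
\sum_{\mfu\in\complement\Lambda}2^{\#\mfu}\gamma_\mfu^{1/2} \le m^{-\frac{1}{1+c}}\sum_{k=L+1}^{\infty}\frac{\bigl((k+n)!\bigr)^{\frac{1}{1+c}}}{k!}\,(2S)^{k}.
\]

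Next I would translate the hypothesis into convergence. By assumption \eqref{eq:condalpha}, $S=\rho(c)^{-1/(2(1+c))}\sum_{j}\alpha_j^{1/(1+c)}<\tfrac12$, hence $T:=2S<1$. Moreover a direct computation shows $(2^{c+1}\alpha_j/\sqrt{\rho(c)})^{1/(1+c)}=2\beta_j$, so that $T=\sum_{j\in\N}(2^{c+1}\alpha_j/\sqrt{\rho(c)})^{1/(1+c)}$ is precisely the quantity appearing in \eqref{eq:epsilonbound}. With $T<1$ the tail geometric series converges and $\sum_{k\ge L+1}T^{k}=T^{L+1}/(1-T)$, which already supplies the right-hand structure; what remains is to absorb the factorial prefactor into a constant $C_2(n)$.

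The crux, and the only place where the restriction $c\in(1/2,1]$ is genuinely used, is the uniform bound
\[
\frac{\bigl((k+n)!\bigr)^{\frac{1}{1+c}}}{k!} = (k!)^{-\frac{c}{1+c}}\prod_{i=1}^{n}(k+i)^{\frac{1}{1+c}} \le (k+n)^{n}\,(k!)^{-1/3} \le C_2(n),\qquad k\ge 1.
\]
Here $\frac{1}{1+c}\le 1$ gives $(k+i)^{1/(1+c)}\le k+i$, while $c>\tfrac12$ forces $\frac{c}{1+c}>\tfrac13$ and hence $(k!)^{-c/(1+c)}\le (k!)^{-1/3}$; since $(k+n)^n(k!)^{-1/3}\to 0$ as $k\to\infty$ its supremum $C_2(n):=\sup_{k\ge 1}(k+n)^{n}(k!)^{-1/3}$ is finite and, crucially, depends only on $n$ and not on $c$. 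Inserting this bound and using $\frac{1}{1+c}=\frac{1}{c+1}$ gives exactly \eqref{eq:epsilonbound}. Finally, for the ``in particular'' statement, $C_2(n)$, $m^{-1/(1+c)}$ and $(1-T)^{-1}$ stay bounded while $T^{L+1}\to 0$ as $L\to\infty$ because $T<1$, so the whole sum tends to $0$. The main obstacle is thus concentrated entirely in this last estimate: one must exploit $c>1/2$ to beat the growth of $((k+n)!)^{1/(1+c)}$ against $k!$ uniformly in $c$, everything else being the standard symmetric-function and geometric-series bookkeeping.
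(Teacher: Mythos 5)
Your proof is correct and follows essentially the same route as the paper: grouping $\complement\Lambda$ by cardinality $k=\#\mfu$, bounding the sum over $k$-element subsets by $\frac{1}{k!}\bigl(\sum_j\beta_j\bigr)^k$ (the paper spells this out via the bijection between subsets and equivalence classes of distinct-entry vectors in $\N^k$, each of size $k!$), using $c\in(1/2,1]$ to bound $\bigl((k+n)!\bigr)^{1/(1+c)}/k!$ by a constant $C_2(n)$ via $(k!)^{-c/(1+c)}\le (k!)^{-1/3}$, and finishing with the geometric tail $\sum_{k>L}q^k=q^{L+1}/(1-q)$, $q<1$ guaranteed by \eqref{eq:condalpha}. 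The only differences are cosmetic (your $\beta_j$ notation and the slightly looser but equally valid polynomial factor $(k+n)^n$ in place of the paper's $C_1(n)k^{2n/3}$).
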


\begin{proof}
The following direct computations are in the spirit of \cite[Lemma 6.3]{Kuo-etal-12-1}.
We start with the equality
\begin{align*}
  2^{\#\mfu}\gamma^{\frac{1}{2}}_{\mfu}(c;(n,m))
  %&=
  %\left( \prod_{j \in \mfu}2^{c+1}\right)^{\frac{1}{c+1}}  
  %\left( \frac{ ( \#\mfu+n) !}{m} \prod_{j \in \mfu}
  %\frac{\alpha_{j}}{\sqrt{\rho(c)}}\right)^{\frac{1}{1+c}}\\
  &=\left( \frac{ ( \#\mfu+n) !}{m} \prod_{j \in \mfu}
  \frac{ 2^{c+1}\alpha_{j}}{\sqrt{\rho(c)}}\right)^{\frac{1}{1+c}}.
\end{align*}
Next, introducing $\mfU_k:=\{\mfu\in \mfU : \#\mfu = k\}$ and summing this up, yields	
\begin{align*}
  \sum_{\mfu\in \complement\Lambda}
  2^{\#\mfu}\gamma^{\frac{1}{2}}_{\mfu}(c;(n,m)) &=
  \sum_{\substack{k\in \N\\k>L}} \sum_{\mfu \in
    \mfU_k}2^{\#\mfu}\gamma^{\frac{1}{2}}_{\mfu}(c;(n,m))
=
  \sum_{k>L} \sum_{\mfu \in \mfU_k}\left( \frac{ ( \#\mfu+n) !}{m}
  \prod_{j \in \mfu} \frac{
    2^{c+1}\alpha_{j}}{\sqrt{\rho(c)}}\right)^{\frac{1}{1+c}}\\ 
  &= m^{-\frac{1}{c+1}}\sum_{k\in \N} \sum_{\mfu \in
    \mfU_k} \left(( \#\mfu+n) !\right)^{\frac{1}{1+c}} 
  \left( \prod_{j \in \mfu} \frac{
    2^{c+1}\alpha_{j}}{\sqrt{\rho(c)}}\right)^{\frac{1}{1+c}}\\ 
  &= m^{-\frac{1}{c+1}}\sum_{k\in \N}\left(( k+n)
  !\right)^{\frac{1}{1+c}}\left(\frac{2^{c+1}}{\sqrt{\rho(c)}}
  \right)^{\frac{k}{1+c}} \sum_{\mfu \in \mfU_k}  
  \left( \prod_{j \in \mfu} \alpha_{j}\right)^{\frac{1}{1+c}}.
\end{align*}
Next, we want to identify subsets $\mfu =
\{u_1,\ldots,u_k\} \in\mfU_k$  with vectors
$\uu=(u_1,\ldots,u_k)\in\N^k$. However, since a set is not ordered and
does not contain the same element multiple times, we proceed as
follows. We define
\(
\widetilde{\N}^k := \left\{\uu\in \N^k : u_j\ne u_i \mbox{ for } i\ne j
\right\},
\)
i.e. the subset of $\N^k$ containing only vectors with distinct
components. Then, each element $\uu=(u_1,\ldots,u_k)\in
\widetilde{\N}^k$ defines a set $\mfu=\{u_1,\ldots,u_k\}\in
\mfU_k$. Obviously, two vectors $\uu,\vv\in\widetilde{\N}^k$ 
define the same set $\mfu\in \mfU_k$ if there is a permutation, i.e. a
bijective map $\ssigma:\{1,\ldots,k\}\to\{1,\ldots,k\}$, such that
$\uu=\vv\circ \ssigma$. If $S_k$ denotes the group of all such
permutations, we have  a natural
equivalence relation on $\widetilde{\N}^k$ by defining  $\uu\sim\vv$ if there
is a $\ssigma\in S_k$ such that $\uu=\vv\circ \ssigma$. If we denote
corresponding equivalence classes  by $[\uu]=\{\vv\in \widetilde{\N}^k:
\vv\sim\uu\}$ then  each comprises $\#[\uu]=k!$ elements. Obviously,
we now have a  bijection $U_k: (\widetilde{\N}^k /\sim)  \to \mfU_k$,
defined by $[\uu] \mapsto \{u_1,\ldots,u_k\}  = \mfu \in \mfU_k$. 
Making  use of this, we can proceed as follows:
\begin{eqnarray*}
\lefteqn{\sum_{k>L} \sum_{\mfu \in
 \mfU_k}2^{\#\mfu}\gamma^{\frac{1}{2}}_{\mfu}(c;(n,m))}\\
&=& m^{-\frac{1}{c+1}}\sum_{k>L}\left(( k+n)
!\right)^{\frac{1}{1+c}}\left(\frac{2^{c+1}}{\sqrt{\rho(c)}}
\right)^{\frac{k}{1+c}} \sum_{\uu\in (\widetilde{\N}^k/\sim)}  
\left( \prod_{j =1}^{k} \alpha_{U_{k}(\uu)_{j}}\right)^{\frac{1}{1+c}}\\
%&=& m^{-\frac{1}{c+1}}\sum_{k>L}\left(( k+n)
%!\right)^{\frac{1}{1+c}}\left(\frac{2^{c+1}}{\sqrt{\rho(c)}}
%\right)^{\frac{k}{1+c}} \frac{1}{k!}\sum_{\nnu \in [U^{-1}_{k}(\mfu)]}  
%\left( \prod_{j =1}^{k} \alpha_{\nnu_{j}}\right)^{\frac{1}{1+c}}\\
&=& m^{-\frac{1}{c+1}}\sum_{k>L}\left(( k+n)
!\right)^{\frac{1}{1+c}}\left(\frac{2^{c+1}}{\sqrt{\rho(c)}}
\right)^{\frac{k}{1+c}} \frac{1}{k!}\sum_{\uu \in \N^k}  
\left( \prod_{j =1}^{k} \alpha_{u_{j}}\right)^{\frac{1}{1+c}}\\
&=&m^{-\frac{1}{c+1}}\sum_{k>L}\left(( k+n)
!\right)^{\frac{1}{1+c}}\left(\frac{2^{c+1}}{\sqrt{\rho(c)}}
\right)^{\frac{k}{1+c}} \frac{1}{k!}\left( \sum_{j\in \N}
\alpha^{\frac{1}{1+c}}_{j} \right)^{k}\\ 
&=&m^{-\frac{1}{c+1}}\sum_{k>L}\left(\frac{( k+n)
  !}{k!}\right)^{\frac{1}{1+c}} \left(
\frac{1}{k!}\right)^{\frac{c}{1+c}}\left( \sum_{j\in \N} \left(
\frac{2^{c+1}}{\sqrt{\rho(c)}} \alpha_{j} \right) ^{\frac{1}{1+c}}
\right)^{k} .
\end{eqnarray*}
Observing  that $c\in (\frac{1}{2},1]$ means particularly
\[
\frac{1}{1+c} \in \left(\frac{1}{2},\frac{2}{3} \right] \quad \text{and} \quad \frac{c}{1+c} \in \left[\frac{1}{4},\frac{1}{3} \right),
\]
leading to 
\[
\left(\frac{( k+n) !}{k!}\right)^{\frac{1}{1+c}} \left(
\frac{1}{k!}\right)^{\frac{c}{1+c}} \le C_1(n) k^{\frac{2n}{3}}
(k!)^{-\frac{1}{3}}= C_1(n) \left(\frac{k^{2n}}{k!}
\right)^{\frac{1}{3}} \le C_2(n). 
\]
This shows 
\[
\sum_{k>L} \sum_{\mfu \in
  \mfU_k}2^{\#\mfu}\gamma^{\frac{1}{2}}_{\mfu}(c;(n,m)) \le C_{2}(n)m^{-\frac{1}{c+1}}\
\sum_{k>L} \left( \sum_{j\in \N} \left(
\frac{2^{c+1}}{\sqrt{\rho(c)}} \alpha_{j} \right) ^{\frac{1}{1+c}}
\right)^{k} .
%m^{-\frac{1}{c+1}} \frac{1}{1- \sum_{j\in \N} \left(
%  \frac{2^{c+1}}{\sqrt{\rho(c)}} \alpha_{j} \right) ^{\frac{1}{1+c}}
%}. 
\]
Finally, employing the well-known identity $\sum_{k=L+1}^{\infty}q^{k}
=\frac{q^{L+1}}{1-q}$ for $q\in (0,1)$ yields 
\begin{align*}
\sum_{\mfu\in \complement\Lambda}2^{\#\mfu}\gamma^{\frac{1}{2}}_{\mfu}(c;(n,m)) &= \sum_{k\in \N} \sum_{\mfu \in \mfU_k}2^{\#\mfu}\gamma^{\frac{1}{2}}_{\mfu}(c;(n,m)\\
&\le C_{2}(n) m^{-\frac{1}{c+1}} \frac{\left( \sum_{j\in \N} \left(
  \frac{2^{c+1}}{\sqrt{\rho(c)}} \alpha_{j}\right)
  ^{\frac{1}{1+c}}\right)^{L+1} }{1- \sum_{j\in \N} \left(
  \frac{2^{c+1}}{\sqrt{\rho(c)}} \alpha_{j} \right) ^{\frac{1}{1+c}} },
\end{align*}
which is the stated inequality.
\end{proof}
After this auxiliary result, we are now in the position to give the
necessary bounds for such weights.

\begin{theorem}\label{thm:bounds}
  Fix $c\in \left(\frac{1}{2},1 \right]$ and choose the weights as
in \eqref{eq:defgamma} with $\alpha_j=\frac{\|\psi_{j}
  \|_{L_{\infty}(\dom)}}{a_{\min}}$, i.e.,
\[
\gamma_{\mfv}=\gamma(c,0,1,\mfv):= \left(  \#\mfv!
\prod_{j \in \mfv} \frac{\|\psi_{j} \|_{L_{\infty}(\dom)}}{
  \sqrt{\rho(c)} a_{\min}}\right)^{\frac{2}{1+c}}. 
\]
If  the assumptions
\begin{equation}\label{eq:condpsi}
\max_{1\le j \le d} \frac{\|\psi_{j}
  \|_{L_{\infty}(\dom)}}{a_{\min}}\le 1
\quad \text{and}  \quad
\sum_{j=1}^{d} \left(\frac{\|\psi_{j}
  \|_{L_{\infty}(\dom)}}{a_{\min}}\right)^{\frac{1}{1+c}} <
\min\left\{\sqrt{6},\frac{\rho(c)^{\frac{1}{2(1+c)}}}{2} \right\} 
\end{equation}
are satisfied, then we have 
\begin{equation}\label{eq:ugfinite}
\|u\|_{H_\ggamma^1(\Omega;H_0^1(\cD))}<\infty
\end{equation}
and for $\Lambda:=\left\{\mfv \subseteq \{1,\dots,d\} :
\#\mfv\le n \right\}$ we have the bound 
\begin{equation}\label{eq:epsilonbound2}
  \sum_{\mfv\in \complement\Lambda}2^{\#\mfv}\gamma^{\frac{1}{2}}_\mfv
  \le  
  C \frac{\epsilon^{n+1}}{1-\epsilon}
  %\frac{\left( \sum_{j=1}^{d} \left( \frac{2^{c+1}}{\sqrt{\rho(c)}}
   % \frac{\|\psi_{j} \|_{L_{\infty}(\dom)}}{a_{\min}}
   % \right) ^{\frac{1}{1+c}}\right)^{n+1} }{1- \sum_{j=1}^{d} 
   % \left( \frac{2^{c+1}}{\sqrt{\rho(c)}} \frac{\|\psi_{j}
   %   \|_{L_{\infty}(\dom)}}{a_{\min}} \right)^{\frac{1}{1+c}} }. 
\end{equation}
with
\begin{equation}\label{epsilon}
\epsilon:=\frac{2}{\rho(c)^{1/2(1+c)}}\sum_{j=1}^d \left(
\frac{\|\psi_j\|_{L_\infty(\dom)}}{ a_{\min}}\right)^{1/(1+c)} \in (0,1).
\end{equation}
\end{theorem}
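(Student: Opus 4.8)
The statement has two independent parts, the finiteness claim (\ref{eq:ugfinite}) and the decay estimate (\ref{eq:epsilonbound2}), and I would prove them separately, abbreviating $\alpha_j=\|\psi_j\|_{L_\infty(\dom)}/a_{\min}$ throughout. For (\ref{eq:ugfinite}) the plan is to insert the explicit weights into the bound (\ref{Bochnerbound1}), reducing everything to the finiteness of the series $S:=\sum_{\mfv\subseteq\{1,\dots,d\}}\gamma_\mfv^{-1}\bigl(\#\mfv!\,\prod_{j\in\mfv}\alpha_j\bigr)^2$. For (\ref{eq:epsilonbound2}) the plan is to recognise the chosen weights as $\gamma_\mfv=\gamma(c,0,1,\mfv)$ and to apply Lemma \ref{lem:sequence} with its weight parameters set to $0$ and $1$ and its truncation level $L$ equal to the order $n$ of the theorem.

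For the finiteness I would substitute $\gamma_\mfv=\bigl(\#\mfv!\,\prod_{j\in\mfv}\alpha_j/\sqrt{\rho(c)}\bigr)^{2/(1+c)}$ and simplify, using $2-\tfrac{2}{1+c}=\tfrac{2c}{1+c}$, so that each summand of $S$ becomes $(\#\mfv!)^{2c/(1+c)}\rho(c)^{\#\mfv/(1+c)}\prod_{j\in\mfv}\alpha_j^{2c/(1+c)}$. Grouping by $k=\#\mfv$ and estimating the elementary symmetric sum by $\sum_{\#\mfv=k}\prod_{j\in\mfv}\alpha_j^{2c/(1+c)}\le\frac1{k!}\bigl(\sum_j\alpha_j^{2c/(1+c)}\bigr)^k$ turns $S$ into $\sum_{k}(k!)^{(c-1)/(1+c)}q^k$ with $q:=\rho(c)^{1/(1+c)}\sum_j\alpha_j^{2c/(1+c)}$. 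Since $\max_j\alpha_j\le1$ and $2c\ge1$ give $\alpha_j^{2c/(1+c)}\le\alpha_j^{1/(1+c)}$, the quantity $q$ is finite under the summability hypothesis. Convergence of $\sum_k(k!)^{(c-1)/(1+c)}q^k$ then follows by a ratio test: for $c<1$ the exponent $(c-1)/(1+c)$ is negative, so the factorial yields convergence for every finite $q$; for $c=1$ the factorial drops out and I would use $\rho(1)=2\zeta(2)/(2\pi^2)=\tfrac16$ together with $q=\tfrac1{\sqrt6}\sum_j\alpha_j\le\tfrac1{\sqrt6}\sum_j\alpha_j^{1/(1+c)}<1$, the last step being exactly where the constant $\sqrt6$ of (\ref{eq:condpsi}) enters. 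In either case $S<\infty$, which proves (\ref{eq:ugfinite}).

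For (\ref{eq:epsilonbound2}) I would first observe that the hypothesis (\ref{eq:condalpha}) of Lemma \ref{lem:sequence} is precisely the second entry of the minimum in (\ref{eq:condpsi}), so the lemma is applicable. The one genuine computation is the exponent identity $\bigl(2^{c+1}\alpha_j/\sqrt{\rho(c)}\bigr)^{1/(1+c)}=2\,\rho(c)^{-1/(2(1+c))}\alpha_j^{1/(1+c)}$, which rests on $2^{(c+1)/(1+c)}=2$; summing over $j$ shows that the geometric base in the conclusion of Lemma \ref{lem:sequence} coincides with $\epsilon$ from (\ref{epsilon}), and the same identity with $\sum_j\alpha_j^{1/(1+c)}<\rho(c)^{1/(2(1+c))}/2$ gives $\epsilon\in(0,1)$. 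Feeding the weight parameters $0,1$ into the lemma (so that $m^{-1/(c+1)}=1$ and $C_2(0)=1$) produces the right-hand side $C\,\epsilon^{n+1}/(1-\epsilon)$ directly. Finally, since $\{1,\dots,d\}\subseteq\N$ and all summands are nonnegative, the finite-dimensional sum in (\ref{eq:epsilonbound2}) is dominated by the sum over $\mfU$ treated in the lemma, so the estimate transfers without change.

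The only delicate point is the role of the individual hypotheses in the finiteness step. The bound $\max_j\alpha_j\le1$ is what permits passing from the exponent $2c/(1+c)$ down to $1/(1+c)$, while the constant $\sqrt6=\rho(1)^{-1/2}$ is the binding constraint precisely at the endpoint $c=1$, where the factorial no longer provides any decay; for $c<1$ convergence is automatic. I would therefore keep the two halves of the minimum in (\ref{eq:condpsi}) conceptually distinct — $\sqrt6$ securing the endpoint finiteness and $\rho(c)^{1/(2(1+c))}/2$ securing $\epsilon<1$ — rather than conflating them.
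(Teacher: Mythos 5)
Your proof is correct, but it is not the paper's proof of the first claim. For \eqref{eq:epsilonbound2} you and the paper proceed identically: both apply Lemma \ref{lem:sequence} with parameters $(n,m)=(0,1)$ and truncation level $L=n$; you simply spell out what the paper leaves as ``follows directly'', namely the identity $\bigl(2^{c+1}\alpha_j/\sqrt{\rho(c)}\bigr)^{1/(1+c)}=2\,\rho(c)^{-1/(2(1+c))}\,\alpha_j^{1/(1+c)}$ that identifies the lemma's geometric base with $\epsilon$, and the fact that the second half of \eqref{eq:condpsi} is exactly the lemma's hypothesis \eqref{eq:condalpha}. For the finiteness \eqref{eq:ugfinite}, however, the paper computes nothing like your series $S$: it cites Theorem 6.4 of \cite{Kuo-etal-12-1} as a black box and merely verifies that theorem's hypotheses, via the exponent-splitting $\sum_j\alpha_j\le\max_j\alpha_j^{c/(1+c)}\sum_j\alpha_j^{1/(1+c)}\le\sqrt6$, which draws on both halves of \eqref{eq:condpsi}. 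Your route --- substituting the weights into \eqref{Bochnerbound1}, reducing to the scalar series $\sum_k (k!)^{(c-1)/(1+c)}q^k$ with $q=\rho(c)^{1/(1+c)}\sum_j\alpha_j^{2c/(1+c)}$, and closing with the ratio test for $c<1$ and with $\rho(1)=\tfrac16$, $q<1$ at the endpoint $c=1$ --- is self-contained, uses only material already established in the paper, and isolates where each hypothesis bites ($\max_j\alpha_j\le 1$ to lower exponents, $\sqrt6$ only at $c=1$), whereas the paper's citation buys brevity and a pointer to the theory that also covers $d=\infty$; your argument loses nothing there either, since $q$ stays finite under the summability hypothesis. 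The individual steps of your computation all check out: the symmetric-sum estimate $k!\sum_{\#\mfv=k}\prod_{j\in\mfv}\beta_j\le\bigl(\sum_j\beta_j\bigr)^k$, the exponent arithmetic $2-\tfrac{2}{1+c}=\tfrac{2c}{1+c}$ and $\tfrac{2c}{1+c}-1=\tfrac{c-1}{1+c}$, and $C_2(0)=1$ read off from the lemma's proof.
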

\begin{proof}
  For the finiteness of the Bochner norm, we employ \cite[Theorem
    6.4]{Kuo-etal-12-1}.  
  With $b_j=\frac{\|\psi_{j} \|_{L_{\infty}(\dom)}}{a_{\min}}$, we see
  that the condition in \cite[Eq. (6.4)]{Kuo-etal-12-1} is
  trivially satisfied for any $p\in (0,1]$ as $d<\infty$. 
For the condition in  \cite[Eq. (6.5)]{Kuo-etal-12-1}, we note
that for any $c\ge 0$ it holds that 
\begin{equation*}
  \sum_{j=1}^{d}  \frac{\|\psi_{j} \|_{L_{\infty}(\dom)}}{a_{\min}}
  \le \max_{1\le j\le d} \left(\frac{\|\psi_{j}
    \|_{L_{\infty}(\dom)}}{a_{\min}}\right)^{\frac{c}{1+c}}  
  \sum_{j=1}^{d}\left(\frac{\|\psi_{j}
    \|_{L_{\infty}(\dom)}}{a_{\min}} \right)^{\frac{1}{1+c}}\le
  \sqrt{6} 
\end{equation*}
due to the assumptions from \eqref{eq:condpsi}. As we have no
restriction on $p$, we can pick any $c\in \left( \frac{1}{2},1\right]$
  and get \eqref{eq:ugfinite}. The final statement
  \eqref{eq:epsilonbound2}  
	follows directly from Lemma \ref{lem:sequence}. 
\end{proof}

%Now, from the last theorem, in particular from (\ref{eq:epsilonbound2}),
%we have
%\begin{equation}\label{eq:boundsumweights}
%\sum_{\mfv\in\complement\Lambda} 2^{\#\mfv}\gamma_\mfv^{1/2} \le C
%\frac{q^{n+1}}{1-q}, \qquad 
%\end{equation}
%with a constant $C>0$ which is  independent of $d$ and $n$.
%By assumption (\ref{eq:condpsi}) we have $q\in(0,1)$ and if
Note that if  (\ref{eq:condpsi}) even holds for $d=\infty$, the
$\epsilon$ defined in (\ref{epsilon}) is
even independent of $d$,  as the involved
sums on the right-hand side converge for $d\to\infty$. %This leads to
%the final conclusion.

\begin{corollary}\label{cor:finalcor1}
Let $u\in H_\ggamma^1(\Omega;H_0^1(\dom))$ be the weak solution of the
parametric PDE and let $u_\Lambda$ be the $\Lambda$-term approximation
of $u$ using $\Lambda=\{\mfv \subseteq\cP(\{1,\ldots,d\}): \#\mfv\le
n\}$. Let $G\in H^{-1}(\dom)$  describe the quantity of interest. Then,
\begin{eqnarray*}
\|u-u_\Lambda\|_{L_\infty(\Omega;H_0^1(\dom))} &\le& C \left(\epsilon/\sqrt{2}\right)^{n+1}
\|u\|_{H_\ggamma^1(\Omega;H_0^1(\dom))}\\% = \widetilde{C} 2^{-(n+1)/2}\\
\|u_G- u_{G,\Lambda}\|_{H_\ggamma^1(\Omega)} & \le &
C \|G\|_{H^{-1}(\dom)}
\left(\epsilon/\sqrt{2}\right)^{n+1}\|u\|_{H_\ggamma^1(\Omega;H_0^1(\dom))}.
\end{eqnarray*}
\end{corollary}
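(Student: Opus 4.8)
The plan is to combine the abstract error estimate of Corollary \ref{cor:uqbound1} with the quantitative weight bound of Theorem \ref{thm:bounds}. Both inequalities in Corollary \ref{cor:uqbound1} reduce the quantities of interest---either $\|u-u_\Lambda\|_{L_\infty(\Omega;H_0^1(\dom))}$ or $\|u_G-u_{G,\Lambda}\|_{H_\ggamma^1(\Omega)}$---to controlling the single scalar sum
\[
S:=\sum_{\mfv\in\complement\Lambda}2^{\#\mfv/2}\gamma_\mfv^{1/2},
\]
multiplied by $C_{emb}\|u\|_{H_\ggamma^1(\Omega;H_0^1(\dom))}$ (with an additional factor $\|G\|_{H^{-1}(\dom)}$ in the second case). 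So, under the weight choice $\gamma_\mfv=\gamma(c,0,1,\mfv)$ inherited from Theorem \ref{thm:bounds}, everything hinges on bounding $S$ by a constant times $(\epsilon/\sqrt2)^{n+1}$.

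The key observation is that $S$ carries the prefactor $2^{\#\mfv/2}$, whereas Theorem \ref{thm:bounds}, through \eqref{eq:epsilonbound2}, controls the closely related sum with prefactor $2^{\#\mfv}$. Since $\Lambda=\{\mfv:\#\mfv\le n\}$, every $\mfv\in\complement\Lambda$ satisfies $\#\mfv\ge n+1$, hence $2^{\#\mfv/2}=2^{-\#\mfv/2}\,2^{\#\mfv}\le 2^{-(n+1)/2}\,2^{\#\mfv}$. First I would factor out this uniform bound and then invoke \eqref{eq:epsilonbound2}, obtaining
\[
S\le 2^{-(n+1)/2}\sum_{\mfv\in\complement\Lambda}2^{\#\mfv}\gamma_\mfv^{1/2}\le 2^{-(n+1)/2}\,\frac{C\,\epsilon^{n+1}}{1-\epsilon}.
\]
Using $2^{-(n+1)/2}\epsilon^{n+1}=(\epsilon/\sqrt2)^{n+1}$ and noting that, by \eqref{epsilon}, $\epsilon\in(0,1)$ so that the denominator $1-\epsilon$ is a fixed positive constant independent of $n$, I would absorb $1/(1-\epsilon)$ together with $C_{emb}$ into a single constant $C$, giving $S\le C(\epsilon/\sqrt2)^{n+1}$.

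Substituting this bound for $S$ into the two estimates of Corollary \ref{cor:uqbound1} then yields the two claimed inequalities verbatim, with the factor $\|G\|_{H^{-1}(\dom)}$ retained explicitly in the second; finiteness of the right-hand sides is guaranteed by \eqref{eq:ugfinite}, which ensures $\|u\|_{H_\ggamma^1(\Omega;H_0^1(\dom))}<\infty$. I do not expect a genuine obstacle here, since the result is essentially a bookkeeping assembly of two previously established estimates. The only point requiring care is the power-of-two accounting: one must recognize that the mismatch between $2^{\#\mfv/2}$ and $2^{\#\mfv}$ is exactly what converts $\epsilon$ into $\epsilon/\sqrt2$, and that this relies on $\complement\Lambda$ containing only sets of cardinality at least $n+1$. (Equivalently, one could re-run the geometric-series computation of Lemma \ref{lem:sequence} with $2^{(c+1)/2}$ in place of $2^{c+1}$ in the product, which directly produces the ratio $\epsilon/\sqrt2$; the factorization above is shorter and uses Theorem \ref{thm:bounds} as a black box.)
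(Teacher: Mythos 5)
Your proposal is correct and follows essentially the same route as the paper's own proof: both reduce the claim to the two estimates of Corollary \ref{cor:uqbound1} and bound $\sum_{\mfv\in\complement\Lambda}2^{\#\mfv/2}\gamma_\mfv^{1/2}$ by writing $2^{\#\mfv/2}=2^{-\#\mfv/2}\,2^{\#\mfv}\le 2^{-(n+1)/2}\,2^{\#\mfv}$ for $\#\mfv\ge n+1$ and then invoking \eqref{eq:epsilonbound2}. In fact your bookkeeping is slightly cleaner than the paper's, whose displayed factorization contains a typo ($2^{-\#\mfv/2}2^{\#\mfv/2}$ where $2^{-\#\mfv/2}2^{\#\mfv}$ is meant) and which silently absorbs the factor $1/(1-\epsilon)$ that you handle explicitly.
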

\begin{proof}
  This immediately follows from Corollary \ref{cor:uqbound1}, Theorem
  \ref{thm:bounds} and 
\[
\sum_{\mfv\in\complement\Lambda} 2^{\#\mfv/2}\gamma_\mfv^{1/2} =
\sum_{\mfv\in\complement\Lambda} 2^{-\#\mfv/2} 2^{\#\mfv/2}
\gamma_{\mfv}^{1/2} \le 2^{-(n+1)/2} \widetilde{C}\epsilon^{n+1},
\]
as $\mfv\in\complement \Lambda$ means $\#\mfv \ge n+1$.
\end{proof}

We are now in the position to combine the results of Section
\ref{sec:mismeasured} and this section, in particular those of
Corollary \ref{cor:samplingLambdaSobmixed} and
Corollary \ref{cor:finalcor1}  to derive a bound on
the approximation error for the quantity of interest $u_G$ of a
solution to a parametric PDE and its discrete approximation
$Q_{X,\lambda}u_G$. However, there are the following things to
consider. First of all, while for Corollary
\ref{cor:samplingLambdaSobmixed} we have assumed the underlying
parameter domain to be $\Omega=[-1,1]$, we require for Corollary
\ref{cor:finalcor1} the domain to be $\Omega=[-1/2,1/2]$. Fortunately,
a scaling argument shows that Corollary
\ref{cor:samplingLambdaSobmixed} also holds for $\Omega=[-1/2,1/2]$.
The involved constants might change but will only depend on $n$ if
$\Lambda = \{\mfu\in \{1,\ldots,d\} : \#\mfu \le n\}$ is used.

More importantly, in Corollary \ref{cor:samplingLambdaSobmixed}, we
have decomposed any $f\in H^\sigma_{\mix}(\Omega)$ into $f=f_1+f_2$, where
$f_1\in H^\sigma_{\mix,\Lambda}(\Omega)$ is the orthogonal projection
of $f$ onto $H^\sigma_{\mix,\Lambda}(\Omega)$. In Corollary
\ref{cor:finalcor1}, however, we rather used the  decomposition of
$f=u_G$ into  $f=f_\Lambda+f_{\complement \Lambda}$, where $f_\Lambda$
uses the anchored components $f_\mfu$ of $f$ for $\mfu\in \Lambda$.
Fortunately, in the situation of mixed regularity spaces, if the
anchored kernels from \cite{Kuo-etal-10-1, Rieger-Wendland-24-1} are used, it follows from
\cite[Theorem 3.10]{Rieger-Wendland-24-1} that $f_\Lambda\in
H^\sigma_{\mix,\Lambda}(\Omega)$. Moreover, by \cite[Proposition
  6.1]{Rieger-Wendland-24-1} it follows that
$H^\sigma_{\mix}(\Omega)=H^\sigma_{\mix,\Lambda}(\Omega)\oplus
H^\sigma_{\mix,\complement\Lambda}(\Omega)$ is an orthogonal
decomposition. This means that we indeed have $f_1=f_\Lambda$ and that
we have the following result.

\begin{corollary}
Let $\Omega=[-1/2,1/2]^d$.
Let $u\in H_\ggamma^1(\Omega;H_0^1(\dom))$ be the weak solution of the
parametric PDE, let  $\Lambda=\{\mfv \subseteq\cP(\{1,\ldots,d\}): \#\mfv\le
n\}$ and let $G\in H^{-1}(\dom)$  define  the quantity of
interest $u_G$. Assume that $u_G\in H^\sigma_{\mix}(\Omega)$ for some
$\sigma\ge 1$. Finally, let $u_{X,G,\Lambda}=Q_{X,\lambda} u_{G}$
denote the regression function from Corollary
\ref{cor:samplingLambdaSobmixed} from the subspace
$H_{\mix,\Lambda}^\sigma(\Omega)$ using the smoothing parameter
$\lambda$ as
described in Corollary \ref{cor:samplingLambdaSobmixed}. Then, we have the bound
\begin{eqnarray}
  \| u_{G} - u_{X,G,\Lambda}\|_{L_{\infty}(\Omega)}& \le & \widetilde{C}
    (\log N)^{\rho_1} N^{-\sigma+1/2}
  \|u_{G}\|_{H^\sigma_\mix(\Omega)} \nonumber \\
  & & \mbox{} +  \widetilde{C} \sqrt{N}(\log
  N)^{\rho_2} (\epsilon/\sqrt{2})^{n+1} |u\|_{H_\ggamma^1(\Omega;H_0^1(\dom))},\label{eq:corfinal2}
  \end{eqnarray}
where $\rho_1=\rho_1(n,\sigma)$ and $\rho_2=\rho_2(n)$ are defined in
 Theorem \ref{thm:samplingLambdaSobmixed}.
\end{corollary}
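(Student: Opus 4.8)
The plan is to glue together the two approximation estimates already proved: the sampling/regression bound of Corollary \ref{cor:samplingLambdaSobmixed}, applied to $f:=u_G$, and the anchored-tail estimate coming from Corollary \ref{cor:uqbound1} together with the weight bound of Theorem \ref{thm:bounds}. The one conceptual point that must be settled before gluing is that the Hilbert-space orthogonal projection $f_1$ appearing in Corollary \ref{cor:samplingLambdaSobmixed} coincides with the anchored truncation $u_{G,\Lambda}$ used in Corollary \ref{cor:uqbound1}. This is exactly what the discussion preceding the statement arranges: since $\sigma\ge 1$ we have $u_G\in H^\sigma_\mix(\Omega)\subseteq H^1_\mix(\Omega)$, so the anchored components are well-defined, and by \cite[Theorem 3.10]{Rieger-Wendland-24-1} together with \cite[Proposition 6.1]{Rieger-Wendland-24-1} the splitting $H^\sigma_\mix(\Omega)=H^\sigma_{\mix,\Lambda}(\Omega)\oplus H^\sigma_{\mix,\complement\Lambda}(\Omega)$ is orthogonal. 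Hence $f_1=u_{G,\Lambda}$ and $f_2=u_G-u_{G,\Lambda}=u_{G,\complement\Lambda}$.

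First I would invoke Corollary \ref{cor:samplingLambdaSobmixed}, using the scaling remark that allows $[-1,1]^d$ to be replaced by $\Omega=[-1/2,1/2]^d$ at the cost of constants depending only on $n$. With $f=u_G$ this yields
\[
\|u_G-Q_{X,\lambda}u_G\|_{L_\infty(\Omega)}\le \widetilde C\bigl[(\log N)^{\rho_1}N^{-\sigma+1/2}\|u_G\|_{H^\sigma_\mix(\Omega)}+\sqrt N\,(\log N)^{\rho_2}\|f_2\|_{L_\infty(\Omega)}\bigr],
\]
which already reproduces the first term on the right-hand side of \eqref{eq:corfinal2}. It then remains only to control $\|f_2\|_{L_\infty(\Omega)}=\|u_G-u_{G,\Lambda}\|_{L_\infty(\Omega)}$ by the stated tail factor.

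For this I would not use the $H^1_\ggamma$-version in Corollary \ref{cor:finalcor1} but its $L_\infty$-counterpart obtained from the \emph{first} inequality of Corollary \ref{cor:uqbound1}. Writing $u_G(\yy)-u_{G,\Lambda}(\yy)=G\bigl(u(\cdot,\yy)-u_\Lambda(\cdot,\yy)\bigr)$ and using boundedness of $G$ pointwise in $\yy$ gives
\[
\|u_G-u_{G,\Lambda}\|_{L_\infty(\Omega)}\le \|G\|_{H^{-1}(\dom)}\,\|u-u_\Lambda\|_{L_\infty(\Omega;H^1_0(\dom))}\le \|G\|_{H^{-1}(\dom)}\,C_{emb}\Bigl(\sum_{\mfv\in\complement\Lambda}2^{\#\mfv/2}\gamma_\mfv^{1/2}\Bigr)\|u\|_{H^1_\ggamma(\Omega;H^1_0(\dom))}.
\]
By Theorem \ref{thm:bounds}, equivalently by the computation closing the proof of Corollary \ref{cor:finalcor1}, the weight sum is bounded by $(\epsilon/\sqrt2)^{n+1}$, since every $\mfv\in\complement\Lambda$ satisfies $\#\mfv\ge n+1$. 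Substituting this into the first displayed estimate and absorbing $\|G\|_{H^{-1}(\dom)}$, $C_{emb}$ and the remaining finite constants into $\widetilde C$ produces exactly \eqref{eq:corfinal2}.

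The bookkeeping of constants is routine; the only genuine obstacle is the identification $f_1=u_{G,\Lambda}$, because Section \ref{sec:mismeasured} operates with the Hilbert-space orthogonal projection whereas Section \ref{sec:owen} and the PDE estimates are phrased through the anchored decomposition. Once the orthogonality of the $\Lambda/\complement\Lambda$ splitting in $H^\sigma_\mix(\Omega)$ is in hand, this identification is immediate and the two chains of estimates combine term by term.
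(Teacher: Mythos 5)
Your proposal is correct, and its first half coincides with the paper's proof: you apply Corollary \ref{cor:samplingLambdaSobmixed} to $f=u_G$ after justifying, exactly as the paper does, that the orthogonal projection $f_1$ coincides with the anchored truncation $u_{G,\Lambda}$ via \cite[Theorem 3.10, Proposition 6.1]{Rieger-Wendland-24-1}. Where you genuinely diverge is in bounding the tail $\|u_G-u_{G,\Lambda}\|_{L_\infty(\Omega)}$. The paper stays with the scalar function $u_G-u_{G,\Lambda}$ and chains three estimates: the Sobolev embedding $H^1_\mix(\Omega)\to L_\infty(\Omega)$, the norm equivalence of Theorem \ref{thm:equivalence} to pass from $H^1_\mix(\Omega)$ to $H^1_\ggamma(\Omega)$, and then the second ($H^1_\ggamma$) inequality of Corollary \ref{cor:finalcor1}. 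You instead pull the functional $G$ out pointwise in $\yy$, writing $u_G(\yy)-u_{G,\Lambda}(\yy)=G\bigl(u(\cdot,\yy)-u_\Lambda(\cdot,\yy)\bigr)$ (which requires the easily verified fact, used implicitly by the paper as well, that the anchored truncation commutes with the linear functional $G$), and then invoke the first (Bochner $L_\infty(\Omega;H^1_0(\dom))$) inequality of Corollary \ref{cor:uqbound1} together with the weight-sum bound $\sum_{\mfv\in\complement\Lambda}2^{\#\mfv/2}\gamma_\mfv^{1/2}\le \widetilde C(\epsilon/\sqrt2)^{n+1}$. Both routes are valid and land on the same final bound, but yours buys something concrete: it bypasses Theorem \ref{thm:equivalence} entirely, whose equivalence constant the paper's own closing remark estimates as $C^2\le 2^{2d}\max_{\mfu\in\complement\Lambda}\gamma_\mfu$, i.e.\ exponential in $d$. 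Your chain only carries $\|G\|_{H^{-1}(\dom)}$, $C_{emb}$ and the weight-sum constant, so it yields a quantitatively cleaner $\widetilde C$ --- a point worth noting in a paper whose theme is avoiding dimension-dependent blow-up.
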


\begin{proof}
%We start with the triangle inequality to derive
%\begin{equation}\label{eq:corproof}
%\| u_{G} - u_{X,G,\Lambda}\|_{L_{\infty}(\Omega)} \le \| u_{G} -
%u_{G,\Lambda}\|_{L_{\infty}(\Omega)}+
%\| u_{G,\Lambda} -  u_{G,\Lambda,X}\|_{L_{\infty}((\Omega))}.
%\end{equation}
We start with applying Corollary \ref{cor:samplingLambdaSobmixed}
directly. In the notation of that corollary we have $f=u_G$,
$f_1=u_{G,\Lambda}$ and $u_{X,G,\Lambda} = Q_{X,\lambda}u_G$. Thus, we have
\begin{eqnarray*}
\|u_{G,\Lambda}-u_{X,G,\Lambda}\|_{L_\infty(\Omega)} &\le & \widetilde{C}\Big[
    (\log N)^{\rho_1} N^{-\sigma+1/2}
  \|u_{G}\|_{H^\sigma_\mix(\Omega)}\\
  & & \mbox{} + \sqrt{N}(\log
    N)^{\rho_2}\|u_{G} - u_{G,\Lambda}\|_{L_\infty(\Omega)}\Big].
  \end{eqnarray*}
To bound the term $u_G-u_{G,\Lambda}$, we use the Sobolev
embedding theorem, Theorem \ref{thm:equivalence}  and Corollary
\ref{cor:finalcor1} to obtain  
\begin{align*} 
  \| u_{G} - u_{G,\Lambda}\|_{L_{\infty}(\Omega)} &\le C_{emb}
  \| u_{G} - u_{G,\Lambda}\|_{H^1_\mix(\Omega)}\\
  &\le C_{emb} C
  \|u_{G}- u_{G,\Lambda}\|_{H^1_\ggamma(\Omega)} \le
  C_{emb} C \tilde{C} (\epsilon/\sqrt{2})^{n+1} |u\|_{H_\ggamma^1(\Omega;H_0^1(\dom))}.
\end{align*}
\end{proof}
\begin{remark}
  The constant $C$ in the above proof is given in the proof of Theorem  \ref{thm:equivalence}
as
\[ C^2= 2^d\max_{\mfu\subseteq\{1,\ldots,d\}}
 \prod_{j\in\{1,\ldots,d\}\setminus\mfu}(\frac{1}{2}+\frac{1}{2})\sum_{\mfv\subseteq\mfu}
 C_{\mfv,\mfu}(\Omega)  \left(\max_{\mfu\in\complement \Lambda}
      \gamma_\mfu\right) \le 2^{2d} \left(\max_{\mfu\in\complement \Lambda}
      \gamma_\mfu\right), 
      \]
 where $C_{\mfv,\mfu}(\Omega)  =1$ due to Lemma
 \ref{lem:poincare}. Moreover, we point out that the maximum has to be  
considered only over the index set $\complement \Lambda$ as $u_{G} -
u_{G,\Lambda}$ will only have contributions there.  
We obtain from \eqref{eq:epsilonbound2} the bound
\[
  \gamma_\mfu = 2^{-2\#\mfu} \left
  ( 2^{\#\mfu} \gamma^{\frac{1}{2}}_{\mfu}\right)^{2} \le 2^{-2\#\mfu} C^2 \left(
  \frac{\epsilon^{n+1}}{1-\epsilon} \right)^2,
\]
where $\epsilon\in (0,1)$ is defined in \eqref{epsilon}.
\end{remark}

Please note that in \eqref{eq:corfinal2} the cardinality of the
sampling point set  $N$ depends on $n$ as well.

\section*{Acknowledgment}
This work is funded by the Deutsche Forschungsgemeinschaft (DFG, German
Research Foundation) –Projektnummer 452806809.
%****************************************************************************************

\end{document}